\newcommand{\on}[1]{\operatorname{#1}}
\newcommand{\mathfont}{\mathbf}
\newcommand{\ZZ}{\mathfont Z}
\newcommand{\NN} {\mathfont N}
\newcommand{\FF}{\mathfont F}
\newcommand{\PP}{\mathfont{P}}
\newcommand{\GG}{\mathfont{G}}
\DeclareFontFamily{OT1}{rsfs}{}
\DeclareFontShape{OT1}{rsfs}{n}{it}{<-> rsfs10}{}
\DeclareMathAlphabet{\mathscr}{OT1}{rsfs}{n}{it}
\newcommand{\Gscr}{\mathscr{G}}
\renewcommand{\Im}{\on{Im}}
\newcommand{\into}{\hookrightarrow}
\newcommand{\ord}{\on{ord}}
\theoremstyle{plain}
\newtheorem{lem}{Lemma}
\newtheorem{thm}[lem]{Theorem}
\newtheorem{prop}[lem]{Proposition}
\newtheorem{cor}[lem]{Corollary}
\newtheorem{fact}[lem]{Fact}
\theoremstyle{definition}
\newtheorem{defn}[lem]{Definition}
\newtheorem{example}[lem]{Example}
\newtheorem{remark}[lem]{Remark}
\newtheorem{notation}[lem]{Notation}
\numberwithin{equation}{section}
\numberwithin{lem}{section}
\newcommand{\Cartier}{\mathscr{C}}
\newcommand{\Ga}{\mathfont{G}_a}
\DeclarePairedDelimiter\abs{\lvert}{\rvert}
\DeclarePairedDelimiter\norm{\lVert}{\rVert}
\DeclarePairedDelimiter\ceil{\lceil}{\rceil}
\DeclarePairedDelimiter\floor{\lfloor}{\rfloor}
\let\oldabs\abs
\def\abs{\@ifstar{\oldabs}{\oldabs*}}
\let\oldnorm\norm
\def\norm{\@ifstar{\oldnorm}{\oldnorm*}}
\let\oldfloor\floor
\def\floor{\@ifstar{\oldfloor}{\oldfloor*}}
\let\oldceil\ceil
\def\ceil{\@ifstar{\oldceil}{\oldceil*}}
\newcommand{\sC}[1]{s\left(\mathcal{C}\left(#1\right)\right)}
\newcommand{\integers}{\mathbb{Z}}
\newcommand{\D}{d} 
\DeclareMathOperator{\Ima}{Im}
\newcommand{\boldheader}[1]{\textbf{#1}.}
\title{Realizing Artin-Schreier Covers with Minimal $a$-numbers in Positive Characteristic}
\author[ABBMFMMNPXY]{Fiona Abney-McPeek \and Hugo Berg \and Jeremy Booher \and Sun Mee Choi \and Viktor Fukala \and Miroslav Marinov \and
Theo M\"{u}ller \and Pawe\l{} Narkiewicz \and Rachel Pries \and Nancy Xu \and Andrew Yuan} 
 \email{jeremy.booher@canterbury.ac.nz} 
 \address{School of Mathematics and Statistics\\
 University of Canterbury\\
   Christchurch, 8140, New Zealand}
   \email{pries@math.colostate.edu}
   \address{Department of Mathematics\\
   Colorado State University\\
   Fort Collins, Colorado 80523}
\begin{document}

\begin{abstract} 
Suppose $X$ is a smooth projective connected curve defined over an algebraically closed field of characteristic $p>0$
and $B \subset X$ is a finite, possibly empty, set of points.
Booher and Cais determined a lower bound for the $a$-number of a 
$\ZZ/p \ZZ$-cover of $X$ with branch locus $B$.
For odd primes $p$, in most cases it is not known if this lower bound is realized.
In this note, when $X$ is ordinary, 
we use formal patching to reduce that question to a computational question about 
$a$-numbers of $\ZZ/p\ZZ$-covers of the affine line.
As an application, when $p=3$ or $p=5$, for any ordinary curve $X$ and any choice of $B$, 
we prove that the lower bound is realized for Artin-Schreier covers of $X$ with branch locus $B$. 
\end{abstract}

\keywords{curve, Jacobian, Artin-Schreier cover, characteristic $p$, 
$a$-number, Cartier operator, $p$-rank, $p$-torsion, formal patching, wild ramification}
\subjclass[2010]{primary:11G20, 11T06,  14D15, 14H40, 15A04, 
secondary:11C08, 12Y05, 14G17, 14H30, 15B33} 

\maketitle



\section{Introduction}

Let $k$ be an algebraically closed field of characteristic $p>0$, and let $X$ be a smooth projective connected curve of genus $g_X$ defined over $k$.  The Jacobian ${\rm Jac}(X)$ of $X$ is an Abelian variety of dimension $g_X$.  The {\it $a$-number} of $X$ is an invariant of the $p$-torsion group scheme ${\rm Jac}(X)[p]$ of ${\rm Jac}(X)$.
It is defined to be 
\[a_X:={\rm dim}_k {\rm Hom}(\alpha_p, {\rm Jac}(X)[p]),\]
where $\alpha_p$ is the kernel of Frobenius on the additive group scheme $\GG_a$ and the homomorphisms are in the category of $k$-group schemes.

Now let $B \subset X(k)$ be a finite set of points. Suppose $\pi:Y \to X$ is a $\ZZ/p\ZZ$-cover branched at $B$.  
The ramification invariants of $\pi$ are the multi-set of values $D=\{d_Q\}_{Q \in B}$
where $d_Q$ is the jump in the ramification filtration (in the lower numbering) above $Q$.
The $a$-number of $Y$ is not generally determined from $a_X$, $B$, and $D$.
However, Booher and Cais determined upper and lower bounds on the $a$-number of a $\ZZ/p \ZZ$-cover $Y$ of $X$ with branch locus $B$ in terms of $a_X$ and the ramification invariants $D$ \cite{bc18}.  In particular, they prove that $a_Y \geq  L( D )$, where
\begin{equation} \label{eq:lowerbound}
 L( D ):=  \max_{1 \leq j \leq p-1} \sum_{Q\in B} \sum_{i=j}^{p-1} \left( \left\lfloor\frac{id_Q}{p}\right\rfloor - \left\lfloor\frac{id_Q}{p} - \left(1-\frac{1}{p}\right)\frac{jd_Q}{p}\right\rfloor\right).
\end{equation}
 We note that the lower bound $L(D)$ depends only on the multi-set of positive integers $\{d_Q\}_{Q \in B}$ and $p$ and does not depend on $X$, $a_X$, or the locations of the branch points.  
In Corollary~\ref{cor:maximized}, we prove that the maximum is achieved for $j = \frac{p-1}{2}$ 
in the right side of \eqref{eq:lowerbound} (or $j=1$ when $p = 2$).

In this paper, we study whether the lower bound is sharp; 
in other words, the goal is to determine whether there exists a cover $\pi : Y \to X$ with branch locus $B$ and ramification invariants $D$ such that $a_Y = L( D )$.  In the first main result, Theorem~\ref{thm:reduction}, 
when $X$ is ordinary,
we reduce this question to the question of whether the lower bound occurs for
$\ZZ/p\ZZ$-covers of the projective line $\PP^1$ 
branched at only one point.  In the second main result, Theorem~\ref{thm:basiccovers}, 
we give a positive answer to this latter question for small primes $p$.

This question fits into a broader program of understanding the $p$-torsion group schemes ${\rm Jac}(Y)[p]$ that occur for Artin-Schreier covers $\pi : Y \to X$ in terms of ${\rm Jac}(X)[p]$ and the ramification invariants of $\pi$.  
Since ${\rm Jac}(X)[p]$ is a group scheme of rank $p^{2 g_X}$,
the first result in this program is the Riemann-Hurwitz formula for wildly ramified covers, 
which determines the genus of $Y$ from the genus of $X$ and the ramification invariants.

The second result in this program is the Deuring-Shafarevich formula.  
The $p$-rank of $X$ is the integer $f_X$ such that $\#{\rm Jac}(X)[p](k) = p^{f_X}$; 
it is known that $0 \leq f_X \leq g$.
Equivalently, $f_X={\rm dim}_{\FF_p} {\rm Hom}(\mu_p, {\rm Jac}(X))$,
where $\mu_p$ is the kernel of Frobenius on $\GG_m$ and the homomorphisms are taken in the category of $k$-group schemes.  
One can check that 
$1 \leq a_X +f_X \leq g_X$. 
The curve $X$ is \emph{ordinary} if $f_X=g$ or, equivalently, if $a_X=0$.  
The Deuring-Shafarevich formula 
determines the $p$-rank of $Y$ from the $p$-rank of $X$ and the ramification invariants. 

Beyond these two results, little is known about the $p$-torsion group schemes ${\rm Jac}(Y)[p]$ that occur for Artin-Schreier covers $Y$ of $X$; see, for example, \cite{ElkinPries13} for hyperelliptic curves in characteristic 2.  
One reason for this is that there are many $\ZZ/p\ZZ$-covers $\pi : Y \to X$ in characteristic $p$.\footnote{ 
When $B$ is empty, there are $(p^{f_X}-1)/(p-1)$ unramified Galois covers $Y \to X$ with group $\ZZ/p \ZZ$,
because the pro-$p$ algebraic fundamental group $\pi_1^p(X)$
is a free pro-$p$ group on $f_X$ generators \cite{shafarevitch47}.  
When $B$ is non-empty, $\pi_1(X-B)$ is infinitely pro-finitely generated \cite{shafarevitch47}.}

The $a$-number can be computed from the action of the Cartier operator on the holomorphic $1$-forms of $Y$.
It depends in an intrinsic way on the Artin-Schreier equation $y^p - y = f$ with $f \in k(X)$
and thus is not determined from the branch locus and the ramification invariants.
By proving that the lower bound for the $a$-number 
can be realized, we provide valuable information about the structure of ${\rm Jac}(Y)[p]$ for a generic choice of $f$, 
see Remark~\ref{Rmoduli}.

We now set up some notation needed to state our results.

\begin{defn} \label{def:ad}
For $d \in \NN$ with $p \nmid d$, let $a(d)$ be the minimal $a$-number for a curve $Y$ over $k$
which admits a $\ZZ/p \ZZ$-cover of $\PP^1$ branched at a single point and having ramification invariant $d$.  
\end{defn}

Note that $a(d)$ depends on $p$.  
For $d \in \NN$ with $p \nmid d$, for a single branch point with ramification invariant $d$, the lower bound \eqref{eq:lowerbound} is
\[
 L(\{d\}) =  \max_{1 \leq j \leq p-1}  \sum_{i=j}^{p-1} \left( \left\lfloor\frac{id}{p}\right\rfloor - \left\lfloor\frac{id}{p} - \left(1-\frac{1}{p}\right)\frac{jd}{p}\right\rfloor\right).
\]

In our first main result, we use formal patching to prove the following reduction:

\begin{thm} \label{thm:reduction}
Let $X$ be an ordinary curve of genus $g_X$.
Suppose $B \subset X(k)$ has cardinality $r$, 
where $r>0$.
For each $Q\in B$, let $d_Q$ be a prime-to-$p$ positive integer and let $D=\{d_Q\}_{Q \in B}$.
Assume that for all $Q \in B$
\begin{equation} \label{Eassumegeneral}
a(d_Q) = L( \{d_Q\}).
\end{equation}
Then the lower bound \eqref{eq:lowerbound} is sharp; i.e., there exists a $\ZZ/p \ZZ$-cover $Y \to X$ with branch locus 
$B$ and ramification invariants $D$ such that $a_Y = L( D)$.
\end{thm}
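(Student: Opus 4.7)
The plan is to produce $\pi: Y \to X$ by formal patching of local $\ZZ/p\ZZ$-covers at the points of $B$, and then to invoke the ordinariness of $X$ to reduce the computation of $a_Y$ to a sum of local contributions already controlled by the hypothesis~\eqref{Eassumegeneral}. As a preparatory step, I would use Corollary~\ref{cor:maximized} to observe that the maximum defining $L(D)$ is achieved at the same value of $j$ (namely $(p-1)/2$ for odd $p$, or $1$ for $p=2$) regardless of $D$. This gives the additivity
\[
 L(D) = \sum_{Q \in B} L(\{d_Q\}),
\]
so once the Booher--Cais lower bound is combined with an upper bound of the same shape, equality will follow.

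For each $Q \in B$, the hypothesis supplies a $\ZZ/p\ZZ$-cover $Z_Q \to \PP^1_k$ branched at a single point $P_Q$, with ramification invariant $d_Q$ and $a_{Z_Q} = L(\{d_Q\})$. Its completion above $P_Q$ is a $\ZZ/p\ZZ$-extension of $\widehat{\O}_{\PP^1, P_Q} \cong k[[t]]$ with a unique break at $d_Q$. I would then appeal to formal patching for Galois covers of curves (in the style of Harbater, Pop, and others) to construct a $\ZZ/p\ZZ$-Galois cover $\pi : Y \to X$ that is étale over $X \smallsetminus B$ and whose completion at each $Q \in B$ is isomorphic, as a $\ZZ/p\ZZ$-extension of $\widehat{\O}_{X,Q}$, to the local model supplied by $Z_Q$ at $P_Q$. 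The ordinariness of $X$ (together with the fact that $B$ is nonempty) guarantees no obstruction to realizing arbitrary local Artin-Schreier data globally in this way, and the ramification invariants of $\pi$ then coincide with $D$ by construction.

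The remaining task is to show $a_Y \leq \sum_{Q \in B} L(\{d_Q\})$. Here I would follow the Booher--Cais analysis of the Cartier operator on $H^0(Y, \Omega^1_Y)$: this space is filtered compatibly with the $\ZZ/p\ZZ$-action, the invariant piece descends to $H^0(X, \Omega^1_X)$ on which the Cartier operator is bijective (since $X$ is ordinary, so contributes $0$ to $a_Y$), and the remaining graded pieces admit an explicit description in terms of the local Artin-Schreier equations at each $Q \in B$. Because the formal patching has pinned these local equations down to agree with those of each $Z_Q$, the kernel of the Cartier operator on the non-invariant graded pieces is bounded above by its dimension in the single-branch cover case, which by hypothesis is $L(\{d_Q\})$; summing over $Q \in B$ gives the desired upper bound.

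The principal obstacle I foresee is this last step: reducing the global Cartier kernel of $Y$ to a sum of purely local contributions at the points of $B$. While ordinariness of $X$ is precisely what kills the contribution from $H^0(X, \Omega^1_X)$, one must verify that no global cross-terms between different branch points appear in the non-invariant part of $H^0(Y, \Omega^1_Y)$ once the local Artin-Schreier data have been specified. This will likely require tracking the Booher--Cais filtration and its Cartier action through the formal patching isomorphisms, showing that each branch point's contribution is independent and matches the corresponding contribution on $Z_Q$.
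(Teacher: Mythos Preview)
Your proposal correctly identifies the additivity $L(D) = \sum_{Q \in B} L(\{d_Q\})$ via Corollary~\ref{cor:maximized} and correctly uses the hypothesis to obtain model covers $Z_Q \to \PP^1$. However, the upper bound step---showing $a_Y \leq \sum_Q L(\{d_Q\})$ for the smooth cover $Y$ you construct---has a genuine gap, which you yourself flag as the ``principal obstacle.'' Prescribing the formal completions of $Y \to X$ at the branch points does not pin down the global Cartier kernel: the non-invariant graded pieces of $H^0(Y,\Omega^1_Y)$ are spaces of global sections of line bundles on $X$, and the Cartier operator on them depends on the global Artin--Schreier datum, not merely on its germs at $B$. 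There is no a priori reason the kernel should split as a sum of local contributions matching those on the $Z_Q$, and the Booher--Cais machinery you invoke produces a lower bound of this local shape, not an upper bound.

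The paper sidesteps this obstacle entirely by a degeneration argument. Rather than construct a smooth cover and analyze its Cartier operator directly, one builds a \emph{singular} cover $\pi_\circ : Y_\circ \to X_\circ$: attach a copy of $\PP^1$ to $X$ at each $Q \in B$ (forming $X_\circ$), and glue the covers $Z_Q$ together with an unramified $\ZZ/p\ZZ$-cover $Y' \to X$ (ordinary by Lemma~\ref{lem:ordinarycover}) to form $Y_\circ$. The Jacobian of $Y_\circ$ is then an extension of ${\rm Jac}(Y') \oplus \bigoplus_Q {\rm Jac}(Z_Q)$ by a torus, so one reads off $a_{Y_\circ} = a_{Y'} + \sum_Q a_{Z_Q} = 0 + \sum_Q L(\{d_Q\}) = L(D)$ immediately. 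Formal patching is then used not to prescribe local data on a smooth cover, but to \emph{deform} $\pi_\circ$ to a smooth cover $\tilde\pi : \tilde Y \to X$ with the same branch locus and ramification invariants; semicontinuity of the $a$-number under specialization yields $a_{\tilde Y} \leq a_{Y_\circ} = L(D)$. Combined with the Booher--Cais lower bound, equality follows. The missing idea in your approach is that the upper bound comes for free from semicontinuity in a degenerating family, rather than from any direct Cartier computation on the smooth cover.
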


In order to apply Theorem \ref{thm:reduction}, 
we need to verify the existence of covers $\pi : Y \to \PP^1$ branched at a single point such that $a_Y=L(\{d\})$. 
By Artin-Schreier theory, given a polynomial $f \in k[x]$, the affine equation $y^p-y=f$ defines 
a $\ZZ/p\ZZ$-cover $Y_f \to \PP^1$ branched only over infinity.  Let $d={\rm deg}(f)$.  
If $p \nmid d$, then $d$ is the ramification invariant above infinity, $Y_f$ has genus $g=(d-1)(p-1)/2$, and $Y_f$ has $p$-rank $0$.  From \eqref{eq:lowerbound}, it is immediate that $a_{Y_f} \geq L(\{d\})$.

\begin{thm} \label{thm:basiccovers}
When $p=3$ or $p=5$, for any positive integer $d$ with $p \nmid d$, there exists a polynomial $f \in \FF_p[x]$ of degree $d$ such that the Artin-Schreier curve $Y_f$
has $a$-number $L(\{d\})$.  In other words,
\[
a(d) = \max_{1 \leq j \leq p-1} \sum_{i=j}^{p-1} \left( \left\lfloor\frac{id}{p}\right\rfloor - \left\lfloor\frac{id}{p} - \left(1-\frac{1}{p}\right)\frac{jd}{p}\right\rfloor\right).
\]
\end{thm}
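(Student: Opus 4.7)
Since the Booher-Cais inequality gives $a_{Y_f} \geq L(\{d\})$ for every polynomial $f$ of degree $d$, the task reduces to producing, for each $d$ coprime to $p \in \{3,5\}$, some $f \in \FF_p[x]$ of degree $d$ for which the Cartier operator $C$ on $H^0(Y_f, \Omega^1)$ has nullity exactly $L(\{d\})$.

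The first step is to write down the Cartier matrix explicitly. A standard basis for $H^0(Y_f, \Omega^1)$ consists of differentials $x^i y^j dx$ with $0 \le j \le p-1$ and $i$ in an explicit range depending on $d$ and $j$. The $\ZZ/p\ZZ$-action decomposes this space into isotypic pieces indexed by $j$, and the matrix of the Frobenius-semilinear operator $C$ with respect to this basis has entries given by specific coefficients of the polynomials $f^{p-1-j}$ times monomials in $x$. This is a classical Hasse-Witt-type computation for Artin-Schreier covers.

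Next I would pick an ansatz for $f$. Simple choices such as $f = x^d$ fail: the resulting Cartier matrix is too sparse and yields an $a$-number strictly larger than $L(\{d\})$. I would instead take $f = x^d + c_{d-1} x^{d-1} + \ldots + c_1 x$ with the $c_i \in \FF_p$ to be determined, and compute the Cartier matrix as a function of these parameters. The specialization recipe is likely to depend on the residue of $d$ modulo $p$ (and perhaps modulo $p^2$), guided by Corollary~\ref{cor:maximized}, which identifies the maximum defining $L(\{d\})$ as occurring at $j = (p-1)/2$: the ansatz should be calibrated to make the block of $C$ acting on the $j = (p-1)/2$ isotypic piece as degenerate as the lower bound demands, while ensuring that the other isotypic blocks attain maximal rank.

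The heart of the proof is a rank calculation: one must show that after specializing the $c_i$, the Cartier matrix splits into isotypic blocks whose kernel dimensions sum precisely to $L(\{d\})$. This amounts to matching the kernel dimension term-by-term with the floor-function sum defining $L(\{d\})$. The main obstacle is the dependence on $d$, which is arbitrary, so enumeration is impossible; the argument will require case analysis on $d \bmod p$ (and possibly finer residues), with the technical bulk being, for $p = 5$, the simultaneous rank control of all four nontrivial isotypic components of $C$ in every residue class. A secondary issue is the insistence that $f \in \FF_p[x]$: although upper semi-continuity of the $a$-number guarantees an open locus of suitable $f$ over $\overline{\FF}_p$ once $L(\{d\})$ is achieved anywhere, pinning down an $\FF_p$-rational point inside that locus will likely require the construction to be sufficiently explicit for each residue class.
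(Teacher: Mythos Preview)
Your high-level strategy---explicit polynomial families depending on the residue class of $d$, followed by a rank computation matched against the floor-sum---is exactly what the paper does. But the technical mechanism you propose for the rank computation has a genuine gap.

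The claim that the $\ZZ/p\ZZ$-action decomposes $H^0(Y_f,\Omega^1)$ into isotypic pieces and that the Cartier matrix ``splits into isotypic blocks'' is false in characteristic $p$: the group algebra $k[\ZZ/p\ZZ]$ is not semisimple, so there is only a filtration, not a direct-sum decomposition. With respect to the basis $\{x^i y^j\,dx\}$ the Cartier operator on $Y_f$ is block \emph{triangular} in $j$, not block diagonal: using $y = y^p - f$ one finds
\[
\Cartier_Y\bigl(h(x)\,y^j\,dx\bigr)\;=\;\sum_{k=0}^{j}\binom{j}{k}(-1)^{j-k}\,y^k\,\Cartier_{\PP^1}\bigl(f^{j-k}h\,dx\bigr),
\]
so the kernel is not a direct sum of kernels of independent blocks, and the picture of tuning the ``$j=(p-1)/2$ block'' while the others have full rank does not make sense as stated. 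Your formula with entries coming from $f^{p-1-j}$ looks like the Hasse--Witt description of a Kummer cover $y^n=f$ in characteristic prime to $n$, which is a different situation.

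What the paper does instead is use the machinery of \cite{bc18} to make this triangular structure tractable. The kernel of $\Cartier_Y$ on regular differentials is identified (Lemma~\ref{lem:calc-anumber}, Proposition~\ref{prop:summary}) with the kernel of an explicit linear map $\psi_f:V_f\to U_f$, where $V_f$ is a space of $p$-tuples of $\Cartier_{\PP^1}$-killed forms on $\PP^1$ and $\psi_f$ is built from a recursion (the map $\gamma_f$ of Definition~\ref{def:omega}) that encodes exactly the off-diagonal interaction above. One then upper-bounds $a_{Y_f}=\dim\ker\psi_f$ by exhibiting enough linearly independent elements in $\Im(\psi_f)$ and applying rank--nullity; combined with the Booher--Cais lower bound this pins down $a_{Y_f}$. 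On the ansatz side, the paper does not carry free coefficients: for $p=3$ it uses binomials $x^d+x^{d-b}$ with $b$ depending on $d\bmod 3$ (\S\ref{sec:char3}); for $p=5$ it uses binomials $x^{25m+\delta}+x^{15m+\delta'}$ for most residues $\delta$ of $d$ modulo $25$ (Proposition~\ref{prop:binomial}) and explicit trinomials, case-split on $d\bmod 5$, for the remaining residues (\S\ref{sec:boston}). The lower-order exponents are chosen so that the images $\gamma_f(\nu)$ of monomial basis vectors have pairwise distinct ``leading'' irregular terms, which is what makes the linear-independence check go through and substitutes for the block structure you were hoping for.
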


\begin{cor} \label{cor:sharp}
If $p = 2,3,5$, the assumption in \eqref{Eassumegeneral} is true and hence the lower bound \eqref{eq:lowerbound} 
for the $a$-number is sharp for $\ZZ/p \ZZ$-covers of any ordinary curve $X$, branched at any finite, non-empty set $B$ of points, 
and for all possible ramification invariants $\{d_Q\}_{Q \in B}$.
\end{cor}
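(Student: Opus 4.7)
The plan is to derive the corollary as an immediate combination of Theorem \ref{thm:reduction} with Theorem \ref{thm:basiccovers}, after separately handling the case $p=2$ that is not explicitly covered by Theorem \ref{thm:basiccovers}. Since the lower bound \eqref{eq:lowerbound} depends only on $p$ and the multi-set $\{d_Q\}_{Q\in B}$, everything reduces to verifying the single-point hypothesis \eqref{Eassumegeneral}, namely $a(d)=L(\{d\})$ for every positive integer $d$ with $p\nmid d$.

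For $p=3$ and $p=5$, Theorem \ref{thm:basiccovers} produces, for each such $d$, a polynomial $f\in\FF_p[x]$ of degree $d$ for which the Artin-Schreier curve $Y_f$ satisfies $a_{Y_f}=L(\{d\})$. Since $Y_f\to\PP^1$ is a $\ZZ/p\ZZ$-cover branched only at infinity with ramification invariant $d$, this shows $a(d)\leq L(\{d\})$, and the reverse inequality is the general lower bound of Booher--Cais. Hence \eqref{Eassumegeneral} holds for every $Q\in B$ and every choice of invariants.

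For $p=2$, the max in \eqref{eq:lowerbound} reduces to the single term $j=1$, giving $L(\{d\})=\lfloor d/2\rfloor-\lfloor d/4\rfloor$ for $d$ odd. In this case the cover $Y_f\to\PP^1$ defined by $y^2-y=f(x)$ with $\deg f=d$ is hyperelliptic, and the action of the Cartier operator on a basis of holomorphic differentials can be written out explicitly (this is classical; compare \cite{ElkinPries13}). Either appealing to such a direct computation with $f=x^d$, or invoking the same formal-patching / monomial-degree argument used in the proof of Theorem \ref{thm:basiccovers} but with the trivial maximization over $j$, shows that $a(d)=L(\{d\})$ also for $p=2$. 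So \eqref{Eassumegeneral} holds in all three primes.

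With \eqref{Eassumegeneral} verified for $p\in\{2,3,5\}$, Theorem \ref{thm:reduction} applies verbatim: given an ordinary curve $X$, a non-empty finite $B\subset X(k)$, and a multi-set $D=\{d_Q\}_{Q\in B}$ of prime-to-$p$ positive integers, there exists a $\ZZ/p\ZZ$-cover $Y\to X$ with branch locus $B$ and ramification invariants $D$ such that $a_Y=L(D)$. This is the claim of the corollary. The only mildly non-formal step is the $p=2$ verification above; all other content is a direct invocation of the two main theorems.
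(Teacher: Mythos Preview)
Your plan is correct and matches the paper exactly for $p=3$ and $p=5$: verify \eqref{Eassumegeneral} via Theorem~\ref{thm:basiccovers} and then apply Theorem~\ref{thm:reduction}.

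For $p=2$ you take a slightly different route than the paper. You propose to check $a(d)=L(\{d\})$ by an explicit Cartier computation on $y^2-y=x^d$ and then feed this into Theorem~\ref{thm:reduction}. The paper instead observes (see Remark~\ref{rmk:p2}(1)) that when $p=2$ and $X$ is ordinary the Booher--Cais upper and lower bounds coincide \cite[Remark~6.30]{bc18}, so \emph{every} $\ZZ/2\ZZ$-cover with the given data already has $a$-number $L(D)$; no single-point verification or patching is needed. Your approach works in principle, but as written it is a sketch (``either appealing to such a direct computation\ldots or invoking the same\ldots argument'') rather than a proof, whereas the paper's route is a one-line citation. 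If you want to keep your structure, the cleanest way to fill the gap is simply to note that the coincidence of bounds for ordinary $X$ specializes to $X=\PP^1$, giving $a(d)=L(\{d\})$ immediately.
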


\begin{remark} \label{rmk:p2}
\begin{enumerate}
\item When $p=2$ and $X$ is ordinary, 
Booher and Cais's lower and upper bounds coincide \cite[Remark 6.30]{bc18}. 
 This recovers the main result of \cite{volochChar2}.  
 Thus the $p=2$ case of Corollary~\ref{cor:sharp} was already known.  

\item It was also already known that the lower bound occurs when $X$ is ordinary and $d_Q \mid (p-1)$ for each $Q \in B$.  In that case, \cite[Corollary 1.4] {bc18} shows that the $a$-number of any $\ZZ/p\ZZ$-cover of $X$ branched over $B$ with ramification invariants $D$ realizes the lower bound.  This extends a result of Farnell and Pries in the case that $X \simeq \PP^1$ \cite{fp13}. 

\item We conjecture that Corollary~\ref{cor:sharp} holds for any prime $p$.  By Theorem~\ref{thm:reduction}, the essential case is for covers of the affine line.  From experiments, 
we expect that for a randomly chosen polynomial $f \in \FF_p[x]$ of degree $d$, 
the probability that the Artin-Schreier curve 
$Y_f:y^p-y=f$ has $a$-number equal to $L({d})$ is approximately $\frac{p-1}{p}$.
Tables \ref{fig:317}, \ref{fig:511}, \ref{fig:712}, and \ref{fig:117} show the distribution of $a$-numbers for the curves $Y_f$ for many randomly chosen polynomials $f$ for particular choices of $p$ and $d$, 
giving some evidence for this expectation.  

\item We give evidence that the lower bound \eqref{eq:lowerbound} may not be sharp when $X$ is non-ordinary.  
For example, when $p=3$, let $X$ be the hyperelliptic curve $y^2 = x^{10}+1$ over $\FF_3$; then $X$ has $a$-number $2$.  Suppose $Y \to X$ is a $\ZZ/3 \ZZ$-cover branched only over the point $P = (0,-1)$ and having ramification invariant $d=14$.  This cover is given by adjoining a root of $z^3 - z =f$ to $k(X)$ for some rational function
$f$ on $X$ regular away from $P$ such that the local expansion, with respect to a fixed uniformizer $t$ at $P$, 
is $t^{-14} + O(t^{-13})$.
Table~\ref{fig:d14} shows the number of such $f$ in $\FF_3(X)$ having a given $a$-number; 
the lower bound of $6$ for $a_Y$ does not occur for any such $f$.  It is conceivable, but we do not expect this to happen, that some cover defined over an extension of $\FF_3$ would have $a$-number $6$.
\end{enumerate}
\end{remark}

\begin{table}[hb]
\centering
\begin{minipage}{0.48\textwidth}
\centering
 \begin{tabular}{| c | c | c | c| }
 \hline
 $a_X$ & 8 & 9 & 10 \\
\hline
Number & 6633 & 2988 & 379 \\
 \hline
\end{tabular}
\caption{$a$-numbers of $10000$ covers of $\PP^1$, $p=3$ and $d=17$, lower bound $8$.}
 \label{fig:317}
\end{minipage}
\hfill
\begin{minipage}{0.48\textwidth}
\centering
 \begin{tabular}{| c | c | c| c | c| c|}
 \hline
 $a_X$ & 10 & 11 & 12 & 13 & 14\\
 \hline
 Number & $8021$ & $1901$ & $64$ & $10$ & $4$ \\
 \hline
\end{tabular}  
\caption{$a$-numbers of $10000$ covers of $\PP^1$, $p=5$ and $d=11$, lower bound $10$.}
\label{fig:511}
\end{minipage}
\end{table}

\begin{table}[hb]
\centering
\begin{minipage}{0.48\textwidth}
\centering
 \begin{tabular}{| c | c | c| c|}
 \hline
 $a_X$ & 18 & 19 & 20 \\
 \hline
 Number & $861$ & $138$ & $1$\\
 \hline
\end{tabular}
\caption{$a$-numbers of $1000$ covers of $\PP^1$, $p=7$ and $d=12$, lower bound 18.}
\label{fig:712}
\end{minipage}
\hfill
\begin{minipage}{0.48\textwidth}
\centering
 \begin{tabular}{| c | c | c| c|}
 \hline
 $a_X$ & 18 & 19 & 20 \\
 \hline
 Number & $894$ & $105$ & $1$\\
 \hline
\end{tabular} 
\caption{$a$-numbers of $1000$ covers of $\PP^1$, $p=11$ and $d=7$, lower bound 18.}
\label{fig:117}
\end{minipage}
\end{table}


\begin{table} [ht]
\centering
 \begin{tabular}{| c| c | c | c | c |}
 \hline
 $a_X$ & 6 & 7 & 8 & 9 \\
 \hline
 Number & 0 & $39366$ & $13122$ & $6561$ \\
 \hline 
\end{tabular}
\caption{$a$-numbers of covers of $y^2 = x^{10}+1$ over $\FF_3$, with $d=14$, lower bound $6$.}
\label{fig:d14}
\end{table}
\begin{remark} \label{Rmoduli}
For a fixed number of branch points and fixed ramification invariants, there is a
moduli space of Artin-Schreier covers of the projective line with that ramification data; that moduli space 
is irreducible by \cite[Theorem 1.1]{PZ}.
Realizing the lower bound for covers with ramification invariants $D$ computes the $a$-number of the universal curve over the component $C_D$ with ramification invariants $D$.   The moduli space of Artin-Schreier covers has a stratification by $a$-number; realizing the lower bound shows that the covers with $a$-number $L(D)$ are dense in $C_D$.
\end{remark}

\subsection{Outline of the paper}

In \S\ref{sec:facts}, we recall some facts about the $a$-number of curves and the lower bound on the $a$-number of an Artin-Schreier cover obtained in \cite{bc18}.  
In Corollaries~\ref{cor:maximized} and \ref{cor:explicitlower}, 
we prove a combinatorial simplification of the lower bound in 
\eqref{eq:lowerbound} which is easier to work with.

In \S\ref{sec:patching}, we prove Theorem~\ref{thm:reduction}.  The strategy is to inductively construct the cover using formal patching.  Because of this argument, we can reduce to studying the existence of Artin-Schreier covers of 
$\PP^1$ branched only at infinity with minimal $a$-number.  
In \S\ref{sec:calculating}, we explain how to use the arguments of \cite{bc18} to give a method of computing the $a$-number of such a cover given an explicit description of the Artin-Schreier extension.  

In \S\ref{sec:char3}, in characteristic three, for any positive integer $d$ with $3 \nmid d$, we construct $\ZZ/3 \ZZ$-covers of $\PP^1$ branched only at infinity with ramification invariant $d$ and with $a$-number $L(\{d\})$.
This establishes the $p=3$ case of Theorem~\ref{thm:basiccovers}.  

In \S\ref{sec:oxford} and \S\ref{sec:boston}, in characteristic $5$, 
for any positive integer $d$ with $ 5 \nmid d$, we give different families of $\ZZ/5 \ZZ$-covers of $\PP^1$ 
branched only at infinity with ramification invariant $d$ and with $a$-number $L(\{d\})$.  These families were discovered experimentally using the computer algebra system MAGMA \cite{magma}.  
In \S\ref{sec:oxford}, we focus on families $y^5 - y =f$ where $f$ is a binomial of degree $d$; for many congruence classes of $d$ modulo $25$, we can find such a family with minimal $a$-number.  In contrast, the families considered in \S\ref{sec:boston} are of the form $y^5 - y = f$ where $f$ is a trinomial of degree $d$; there are four families with minimal $a$-number depending on $d$ modulo $5$.  

\begin{remark}
\begin{enumerate}
\item The methods of this paper still apply when $p>5$, but the analysis to compute the $a$-number becomes increasingly complicated.  The difficulty is in finding examples which are generic enough to have the minimal $a$-number but still
simple enough to analyze.  

\item The methods of this paper cannot be used to show the upper bound for the $a$-number of a $\ZZ/p\ZZ$-cover is sharp.

\item The methods used in Theorem~\ref{thm:reduction} also apply to other invariants of ${\rm Jac}(Y)$ including the Newton polygon of its $p$-divisible group, see \cite{booherpriesNP}, or the Ekedahl-Oort type of its $p$-torsion group scheme.  

\end{enumerate}
\end{remark}

\subsection{Acknowledgements}
This paper originated from work of Booher and Pries on realizing curves with minimal $a$-numbers.
The material in Sections \S\ref{sec:oxford} (resp.\ \S\ref{sec:boston})
was discovered by high-school students at the PROMYS program in Oxford (resp.\ Boston).
We would like to thank the PROMYS program for running these research lab projects, 
with support from the PROMYS Foundation and the Clay Mathematics Institute.  
We also thank Kevin Chang for mentoring the group at PROMYS in Boston, Bryden Cais for many helpful conversations, and the referees for their useful comments.

Booher was partially supported by the Marsden Fund Council administered by the Royal Society of New Zealand.
Pries was partially supported by NSF grant DMS-19-01819. 

\section{Facts about \texorpdfstring{$\ZZ/p\ZZ$}{Z/pZ}-covers and \texorpdfstring{$a$}{a}-numbers} \label{sec:facts}

Fix a prime $p$.  Let $k$ be an algebraically closed field of characteristic $p$; by a curve over $k$, we mean a smooth, connected, projective curve over $k$.

\subsection{Genus, \texorpdfstring{$p$}{p}-Rank, and \texorpdfstring{$a$}{a}-Number}

\begin{notation} \label{Ncover}
Let $\pi:Y \to X$ be a branched Galois cover of curves over $k$ with Galois group $\ZZ/p\ZZ$.  Let $B \subset X(k)$ be the branch locus. 
Since the ramification index divides the degree, $\pi$ is totally ramified above each point of $B$.
For $Q \in B$, let $d_Q$ denote the break in the ramification filtration at the unique ramification point above $Q$ (in the lower numbering).  Let $g_Y$ be the genus of $Y$ and $g_X$ be the genus of $X$.
\end{notation}

We refer to the indexed multi-set $D=\{d_Q\}_{Q \in B}$ as the \emph{ramification invariants} of $\pi$.  The ramification invariants $d_Q$ are positive integers that are prime-to-$p$.

\begin{fact} \label{TRH} (Riemann-Hurwitz formula \cite[IV, Prop.\ 4]{serreLF})
With notation as in Notation~\ref{Ncover}, 
\[2g_Y-2 = p(2g_X-2) + \sum_{Q \in B} (p-1)(d_Q+1).\]
\end{fact}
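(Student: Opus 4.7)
The statement is the classical wildly ramified Riemann-Hurwitz formula, so my plan is to derive it from the general Riemann-Hurwitz formula together with an explicit computation of the different using the ramification filtration.

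First, I would invoke the general Riemann-Hurwitz formula for a finite separable morphism of smooth projective curves (see Hartshorne IV.2 or Serre's Local Fields Ch.\ III--IV): if $\pi \colon Y \to X$ has degree $n$ and different divisor $\mathfrak{D}_{Y/X}$, then
\[
2g_Y - 2 = n(2g_X - 2) + \deg(\mathfrak{D}_{Y/X}).
\]
Here $n = p$ and only the points lying above $B$ contribute to $\mathfrak{D}_{Y/X}$.

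Next, I would compute the local contribution at each $Q \in B$. Since $\pi$ is a Galois cover of prime degree $p$ branched at $Q$, it is totally ramified there, so there is a unique point $\widetilde Q$ above $Q$. The local contribution to $\deg(\mathfrak{D}_{Y/X})$ is the different exponent $d(\widetilde Q / Q)$, which by the standard formula (Serre, Local Fields, IV, Prop.\ 4) equals
\[
d(\widetilde Q / Q) = \sum_{i=0}^{\infty}(|G_i| - 1),
\]
where $G_i$ denotes the $i$-th ramification group in the lower numbering at $\widetilde Q$.

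The key input is then the shape of the ramification filtration for a $\ZZ/p\ZZ$-cover. By definition of the break $d_Q$, the filtration is
\[
G_0 = G_1 = \cdots = G_{d_Q} = \ZZ/p\ZZ, \qquad G_{d_Q+1} = G_{d_Q+2} = \cdots = \{1\}.
\]
Substituting into the sum gives $d(\widetilde Q/Q) = (d_Q+1)(p-1)$. Summing over $Q \in B$ and combining with the global Riemann-Hurwitz formula yields the claimed identity. The only mildly subtle point is justifying that for a $\ZZ/p\ZZ$-cover there is indeed a single break in the ramification filtration (so that the filtration has the displayed form), but this is immediate from the fact that each $G_i$ is a subgroup of $\ZZ/p\ZZ$ and hence is either trivial or the whole group.
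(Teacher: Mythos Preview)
Your argument is correct and is exactly the standard derivation: apply the general Riemann--Hurwitz formula, compute the different exponent via $\sum_{i\geq 0}(|G_i|-1)$, and use that for a $\ZZ/p\ZZ$-cover the lower ramification filtration has a single jump at $d_Q$. Note that the paper does not actually supply a proof of this fact; it is quoted as a standard result with a citation to Serre's \emph{Local Fields}, IV, Prop.~4, which is precisely the formula for the different that you invoke.
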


Let $f_X$ be the $p$-rank of $X$ and let $f_Y$ be the $p$-rank of $Y$.  By definition,
\begin{equation} \label{Dprank}
f_X={\rm dim}_{\FF_p}{\rm Hom}(\mu_p, {\rm Jac}(X)[p])
\end{equation}
where the homomorphisms are taken in the category of $k$-group schemes.

\begin{fact} \label{TDS} (Deuring-Shafarevich formula \cite[Theorem 4.2]{subrao})
With notation as in Notation \ref{Ncover}, then
\[f_Y-1 = p(f_X-1) + \#B(p-1).\]
\end{fact}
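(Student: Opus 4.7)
The plan is to translate the identity into a comparison of Frobenius-fixed parts of coherent cohomology via Artin-Schreier theory. For any smooth projective curve $Z$ over $k$, the short exact sequence of \'etale sheaves $0 \to \FF_p \to \O_Z \xrightarrow{F-1} \O_Z \to 0$, combined with the surjectivity of $F-1$ on the algebraically closed field $k$, gives
\[
H^1_{\mathrm{et}}(Z,\FF_p) \;\cong\; \ker\bigl(F - 1 \mid H^1(Z,\O_Z)\bigr),
\]
whose $\FF_p$-dimension equals the stable rank of Frobenius on $H^1(Z,\O_Z)$, i.e.\ the $p$-rank $f_Z$; this matches \eqref{Dprank} through Cartier duality between $\mu_p$ and $\FF_p$. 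Hence it suffices to show that the stable rank of $F$ on $H^1(Y,\O_Y)$ equals $p f_X - (p-1) + (p-1)\#B$.

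Since $\pi$ is finite, $H^1(Y,\O_Y) = H^1(X,\pi_*\O_Y)$, with the $\ZZ/p\ZZ$-action and semilinear Frobenius preserved. I would then analyze $\pi_*\O_Y$ through the local structure of an Artin-Schreier extension. Away from $B$ the cover is \'etale and the graded pieces of $\pi_*\O_Y$ for the filtration coming from $k[\ZZ/p\ZZ] = k[t]/(t-1)^p$ are trivial twists. At each $Q \in B$, a local Artin-Schreier normal form $y^p - y = u^{-d_Q} + \ldots$ pins down the graded pieces of $\pi_*\O_Y$ near $Q$ as explicit line bundles depending on $d_Q$. A global $h^1$ count of the resulting filtration recovers $g_Y$ via Riemann-Hurwitz (Fact~\ref{TRH}) as a consistency check.

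The decisive step is then to track Frobenius on each filtration piece and show that passing from $X$ to $Y$ adds $(p-1)$ to the stable rank for each branch point and subtracts $(p-1)$ globally once (reflecting the connectedness of $Y$). The hard part will be showing that the local contribution is exactly $p-1$ \emph{independently of the break $d_Q$}: wild ramification adds $(p-1)(d_Q+1)/2$ to the genus, but only $p-1$ of those new cohomology classes are Frobenius-stable. Carrying out this truncation from $(p-1)(d_Q+1)/2$ down to $p-1$ is the essential combinatorial content of Deuring-Shafarevich and has to be verified by direct computation with the explicit local Artin-Schreier filtration at $Q$. An alternative route would be the Hochschild-Serre spectral sequence for the \'etale cover $\pi^{-1}(X\setminus B) \to X\setminus B$ together with the Grothendieck-Ogg-Shafarevich Euler-Poincar\'e formula to handle the boundary, but the same break-independent cancellation of wild Swan conductors to $(p-1)$ per branch point remains the crux.
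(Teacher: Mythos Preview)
The paper does not prove this statement: it is stated as a Fact with a citation to \cite[Theorem~4.2]{subrao}, and no argument is supplied. There is therefore no proof in the paper to compare your proposal against.

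As for the proposal itself, it is an outline rather than a proof. The identification of the $p$-rank with the stable rank of Frobenius on $H^1(Z,\O_Z)$ via the Artin--Schreier sequence is correct, and filtering $\pi_*\O_Y$ with line-bundle graded pieces is a standard and workable strategy. But you explicitly defer the decisive step---showing that each branch point contributes exactly $p-1$ to the Frobenius-stable rank regardless of the break $d_Q$---to a ``direct computation'' that you do not perform. That computation \emph{is} the content of the Deuring--Shafarevich formula; without it the proposal remains a sketch. If you intend to supply a proof, you must actually carry out that local analysis, or else run the Euler--Poincar\'e argument to completion rather than simply noting that the same crux reappears there.
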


\begin{lem} \label{lem:ordinarycover}
If $X$ is ordinary and $\pi: Y \to X$ is an unramified $\ZZ/p \ZZ$-cover then $Y$ is also ordinary.
\end{lem}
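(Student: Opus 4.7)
The plan is to combine the two formulas stated just above the lemma (Riemann-Hurwitz, Fact~\ref{TRH}, and Deuring-Shafarevich, Fact~\ref{TDS}) applied in the unramified case. Since $\pi$ is unramified, the branch locus $B$ is empty, so both formulas simplify considerably.

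First I would apply Riemann-Hurwitz with $B=\emptyset$ to obtain $2g_Y - 2 = p(2g_X - 2)$, which gives $g_Y = p(g_X - 1) + 1$. Next I would apply Deuring-Shafarevich with $\#B = 0$ to obtain $f_Y - 1 = p(f_X - 1)$. Using the hypothesis that $X$ is ordinary, i.e.\ $f_X = g_X$, this becomes $f_Y = p(g_X - 1) + 1$. Comparing the two expressions yields $f_Y = g_Y$, which is precisely the statement that $Y$ is ordinary.

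There is essentially no obstacle here: the result is a one-line consequence of the two structural formulas recalled in the section, once one observes that the ramification contribution vanishes for an unramified cover. The only mild subtlety worth flagging is that Deuring-Shafarevich requires the cover to be a $\ZZ/p\ZZ$-cover of curves over an algebraically closed field of characteristic $p$, which is part of the standing hypotheses of the section, so it applies directly.
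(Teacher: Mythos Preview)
Your argument is correct and is exactly the approach the paper takes; the paper's proof is the single sentence ``This follows from the Deuring-Shafarevich formula,'' and you have simply spelled out the implicit comparison with Riemann--Hurwitz needed to conclude $f_Y=g_Y$.
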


\begin{proof}
This follows from the Deuring-Shafarevich formula.
\end{proof}

\begin{defn} \label{defn:a-number}
Let $X$ be a curve over $k$.  Let $\alpha_p$ be the kernel of Frobenius on $\Ga$.  The $a$-number of $X$ is defined to be
\begin{equation} \label{Edefanumber}
a_X := {\rm dim}_k {\rm Hom}(\alpha_p, {\rm Jac}(X)[p])
\end{equation}
where the homomorphisms are taken in the category of $k$-group schemes.
\end{defn}

It can equivalently be defined as the dimension of the kernel of the Cartier operator $\Cartier_X$ on the space of regular differentials on $X$. 

There is no analogue of the Riemann-Hurwitz or Deuring-Shafarevich formulas for the $a$-number as $a_Y$ is not determined by $X$, $B$, and $D$.

\begin{remark}
Suppose $X$ is a singular curve, whose only singularities are ordinary double points, possibly of non-compact type.  Then it is still possible to define the genus, $p$-rank and $a$-number of $X$.  Let $\tilde{X}$ be the normalization of $X$.  The Jacobian ${\rm Jac}(X)$ is a semi-abelian variety of dimension $g$.  By \cite[9.2.8]{BLR}, there is an exact sequence
\[0 \to T \to {\rm Jac}(X) \to {\rm Jac}(\tilde{X}) \to 0,\]
for some torus $T$.
Let $\epsilon$ be the rank of $T$.
Then the (geometric) genus, $p$-rank, and $a$-numbers are related by:
\begin{equation} \label{Erelate}
g_X = \epsilon + g_{\tilde{X}}, \ f_X = \epsilon + f_{\tilde{X}}, \ a_X = a_{\tilde{X}}.
\end{equation}
More formally, \eqref{Dprank} and \eqref{Edefanumber} are valid definitions for the $p$-rank and $a$-number of a 
curve whose only singularities are ordinary double points, and these definitions are compatible with 
the equalities in \eqref{Erelate}. 
\end{remark}

\subsection{Bounds on the \texorpdfstring{$a$}{a}-number } \label{sec:boundsona}

In this section, we describe a lower bound for the $a$-number of a branched $\ZZ/p\ZZ$-cover of curves.
The following notation will be useful.

\begin{defn} \label{Dnewlower}
Let $d$ be a positive prime-to-$p$ integer.
Let $j$ be an integer with $0 \leq j \leq p-1$.
For an integer $i$ with $j \leq i \leq p-1$, 
we define:
\[\tau_{i,j} = id - (1-\frac{1}{p})dj;\]
\[A_{i,j} = \{n :  p |n \text{ and }  \tau_{i,j} < n \leq i d \};\]
\begin{equation} \label{DLij}
L_{i,j}(\{d\}) = 
\left\lfloor\frac{id}{p}\right\rfloor - \left\lfloor\frac{\tau_{i,j}}{p}\right\rfloor=
\left\lfloor\frac{id}{p}\right\rfloor - \left\lfloor\frac{id}{p} - \left(1-\frac{1}{p}\right) \frac{jd}{p}\right\rfloor;
\end{equation}
and
\[E_{i,j} = \{n : p \mid n \text{ and }  \tau_{i,j} < n \leq \tau_{i,j-1}\}.\]
\end{defn}

Notice that $L_{i,j}(\{d\}) = |A_{i,j}|$ because $\left\lfloor\frac{id}{p}\right\rfloor$ (resp.\ 
$\left\lfloor\frac{\tau_{i,j}}{p}\right\rfloor$)
is the number of positive  multiples of $p$ less than or equal to $id$ (resp.\ $\tau_{i,j}$).
When $(i,j) \neq (0,0)$, we note that $\tau_{i,j} >0$ and $\tau_{i,j}$ is not an integer that is divisible by $p$.

\begin{defn} \label{DnewdefL}
Let $j$ be an integer with $0 \leq j \leq p-1$.
For a positive prime-to-$p$ integer $d$, define
\[L_j(\{d\})= \sum_{i=j}^{p-1} L_{i,j}(\{d \}) = \sum_{i=j}^{p-1} \left( \left\lfloor\frac{id}{p}\right\rfloor - \left\lfloor\frac{id}{p} - \left(1-\frac{1}{p}\right) \frac{jd}{p}\right\rfloor \right)
. \]
\end{defn}

\begin{fact}[{\hspace{1sp}\cite[Theorem 1.1]{bc18}}] \label{fact:lowerbound}
With notation as in Notation \ref{Ncover} and Definitions \ref{Dnewlower} and \ref{DnewdefL}, then
\begin{equation} \label{equation:lowerbound}
a_Y \geq L(D) := \max_{1 \leq j \leq p-1} \sum_{Q \in B} L_j(\{d_Q\}).
\end{equation}

\end{fact}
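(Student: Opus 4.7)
The plan is to prove $a_Y \geq L(D)$ by exhibiting, for each $j$ with $1 \leq j \leq p-1$, an explicit $k$-subspace of $\ker(\Cartier_Y) \subseteq H^0(Y, \Omega^1_Y)$ whose dimension equals $\sum_{Q \in B} L_j(\{d_Q\})$. Since $a_Y = \dim_k \ker(\Cartier_Y)$, maximizing over $j$ yields the claim. I would start from the Artin-Schreier presentation: $Y$ is birational to $y^p - y = f$ for some $f \in k(X)$ with a pole of order $d_Q$ at each $Q \in B$, and the pushforward $\pi_* \Omega^1_Y$ decomposes into eigenspaces for the $\ZZ/p\ZZ$-action, indexed by $i \in \{0, \ldots, p-1\}$. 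A form $y^i \omega$ with $\omega$ meromorphic on $X$ lies in the $i$-th eigenspace and is regular on $Y$ precisely when the pole order of $\omega$ at each $Q$ is bounded by an expression involving $\lfloor i d_Q / p \rfloor$; a Riemann-Roch count then governs the dimension of each graded piece.

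Next I would compute how $\Cartier_Y$ acts on these graded pieces. Using the defining properties $\Cartier(h^p \eta) = h \Cartier(\eta)$, $\Cartier(d \eta) = 0$, and the Artin-Schreier relation $dy = df$, one checks that $\Cartier_Y$ carries the eigenspace labeled by $p-j$ into a sum of pieces labeled by $p - j'$ with $j' \leq j$. For a differential $y^{p-j} \omega$ in which $\omega$ has a pole at each $Q$ whose order lies strictly between $\lfloor \tau_{i,j}/p \rfloor$ and $\lfloor i d_Q / p \rfloor$ (with $\tau_{i,j}$ as in Definition~\ref{Dnewlower}), the image under $\Cartier_Y$ vanishes. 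The number of linearly independent such $\omega$ contributed at a single branch point $Q$ is exactly $L_{i,j}(\{d_Q\}) = |A_{i,j}|$, i.e., the count of multiples of $p$ in $(\tau_{i,j}, i d_Q]$. Summing $L_{i,j}(\{d_Q\})$ over $i$ from $j$ to $p-1$ and over $Q \in B$ produces the desired dimension $\sum_{Q \in B} L_j(\{d_Q\})$.

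The main obstacle is the careful local analysis at each branch point: one must simultaneously enforce regularity of the candidate differentials on $Y$ (a bound on the pole order of $\omega$ from above) and Cartier vanishing (a complementary bound from below), and the combinatorial identity $L_{i,j} = |A_{i,j}|$ is precisely the numerical content of comparing these two sets of bounds. A secondary issue is verifying that the differentials constructed from distinct triples $(i, j, Q)$ are linearly independent as global sections of $\Omega^1_Y$; this follows from the eigenspace decomposition of $\pi_* \Omega^1_Y$, but requires checking when several branch points interact globally on $X$ via Riemann-Roch.
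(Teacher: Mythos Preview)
The paper does not prove this statement; it is recorded as a Fact and attributed to \cite[Theorem~1.1]{bc18}. The machinery of \cite{bc18} is, however, unpacked in \S\ref{sec:calculating} for basic Artin--Schreier covers, and that discussion shows why your direct construction cannot succeed as written.

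The central gap is the claim that $\Cartier_Y(y^{p-j}\omega)$ vanishes whenever $\omega$ has pole orders in a prescribed range. Writing $y = y^p - f$ and expanding, one finds
\[
\Cartier_Y(y^i \omega) \;=\; \sum_{t=0}^{i} \binom{i}{t}\, y^t \,\Cartier_X\bigl((-f)^{\,i-t}\omega\bigr),
\]
so vanishing forces $\Cartier_X(f^s\omega) = 0$ for every $0 \le s \le i$ (the binomial coefficients are nonzero in characteristic $p$ for $0\le t\le i\le p-1$). These conditions depend on the actual rational function $f$, not merely on its pole orders $d_Q$, and therefore cannot be guaranteed by constraining $\ord_Q(\omega)$ alone. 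Two secondary issues reinforce this: in characteristic $p$ the $\ZZ/p\ZZ$-action is unipotent, so there is no eigenspace decomposition of $\pi_*\Omega^1_Y$---the splitting $\omega = \sum_i \omega_i y^i$ of Fact~\ref{fact:regularity} is a basis decomposition with respect to which $\Cartier_Y$ is only lower-triangular; and your description conflates the indices, applying $\Cartier_Y$ to $y^{p-j}\omega$ while imposing a condition involving $\tau_{i,j}$ without specifying how $i$ enters.

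The argument in \cite{bc18} is indirect. One realizes $H^0(Y,\ker\Cartier_Y)$ as the kernel of an explicit map from a space depending only on the $d_Q$ to a target built from skyscraper sheaves at the branch points (in the basic case this is the map $\psi_f\colon V_f\to U_f$ of Definition~\ref{def:psi} and Proposition~\ref{prop:summary}). The lower bound on $a_Y=\dim\ker\psi_f$ then comes from an \emph{upper} bound on the rank of this map, obtained via a filtration of the target indexed by $j$; the quantities $L_{i,j}$ of Definition~\ref{Dnewlower} record the sizes of the graded pieces. Thus the parameter $j$ selects where to truncate a filtration, not which regular differentials to exhibit.
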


For our applications, we need to show that $L(D)$ 
is ``additive'' in $B$; in other words, that we can reverse the ordering of taking the maximum and summing 
over $Q \in B$.
Corollary~\ref{cor:maximized} gives an alternate expression for $L(D)$ which makes this property clear. 
The proof is intricate so  
we begin with a few preliminary arguments.

\begin{lem} \label{lem:sets}
If $p$ is odd and $(p+1)/2  \leq j \leq p-1$, then 
\[
\bigcup_{i=j+1}^{p-1} E_{i,j+1} \subset A_{j,j}
\]
and the union on the left is a disjoint union.
\end{lem}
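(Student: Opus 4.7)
The plan is to unfold the definitions so that everything reduces to inequalities among the real numbers $\tau_{i,j}$. Since $\tau_{j,j} = jd - (1-1/p)dj = jd/p$, the set $A_{j,j}$ consists of the multiples of $p$ in the interval $(jd/p, jd]$, while each $E_{i,j+1}$ consists of the multiples of $p$ in $(\tau_{i,j+1}, \tau_{i,j}]$. It therefore suffices to show that for each $i \in \{j+1, \ldots, p-1\}$ the interval $(\tau_{i,j+1}, \tau_{i,j}]$ is contained in $(jd/p, jd]$, and that these intervals, for different $i$, are pairwise disjoint.

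For the inclusion, I would compute the two gaps
\[
\tau_{i,j+1} - \frac{jd}{p} = d(i-j-1) + \frac{d}{p}, \qquad jd - \tau_{i,j} = \frac{d}{p}\bigl(j(2p-1) - ip\bigr).
\]
The first is at least $d/p > 0$ since $i \geq j+1$. The second is non-negative precisely when $i \leq j(2 - 1/p)$; the hypothesis $j \geq (p+1)/2$ makes this hold for every $i \leq p-1$, since in the extreme case $i = p-1$, $j = (p+1)/2$ the inequality $j(2p-1) \geq ip$ reduces to $(p+1)(2p-1) \geq 2p(p-1)$, i.e.\ $3p \geq 1$.

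For disjointness, the key identity is the telescoping-like computation
\[
\tau_{i+1,j+1} - \tau_{i,j} = d - \left(1 - \frac{1}{p}\right)d = \frac{d}{p} > 0,
\]
which shows that consecutive intervals $(\tau_{i,j+1}, \tau_{i,j}]$ are separated by a gap of exactly $d/p$; in particular the intervals are pairwise disjoint and ordered by $i$.

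The only essential content is the role of the hypothesis $j \geq (p+1)/2$: it is exactly what forces the top of the largest interval (the one with $i = p-1$) to stay at or below $jd$. Without this assumption, the right endpoint $\tau_{p-1, j}$ could exceed $jd$ and the inclusion in $A_{j,j}$ would fail. The disjointness, by contrast, is automatic from the identity $\tau_{i+1,j+1} = \tau_{i,j} + d/p$ and does not use the hypothesis.
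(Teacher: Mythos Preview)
Your proof is correct and follows essentially the same approach as the paper's: both arguments reduce to the chain of inequalities $\tau_{j,j} < \tau_{j+1,j+1} \leq \tau_{j+1,j} < \tau_{j+2,j+1} \leq \cdots \leq \tau_{p-1,j} \leq jd$, with the hypothesis $j \geq (p+1)/2$ entering only in the last step. The paper states this chain directly and verifies the final inequality, while you compute each gap explicitly (e.g.\ $\tau_{i+1,j+1} - \tau_{i,j} = d/p$), but the content is the same.
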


\begin{proof}
Recall that $\tau_{i,j} = id - (1-\frac{1}{p})dj$, and notice that
$\tau_{i,j} \leq \tau_{i,j-1} \leq \tau_{i+1,j}$.  Then we see that
\[
\tau_{j,j} \leq \underbrace{\tau_{j+1,j+1} \leq \tau_{j+1,j} }_{E_{j+1,j+1}} \leq \underbrace{ \tau_{j+2,j+1} \leq \tau_{j+2,j}}_{E_{j+2,j+1}} \leq \ldots \leq \underbrace{ \tau_{p-1,j+1} \leq \tau_{p-1,j}}_{E_{p-1,j+1}}.
\]
As $(p+1)/2 \leq j$, we see that $\tau_{p-1,j} \leq p-1 - (1-\frac{1}{p}) (p+1)/2 \leq jd$.
Now $\tau_{j,j}$ and $jd$ are the bounds in the definition of $A_{j,j}$, 
and the intermediate terms are the bounds in the definition of $E_{i,j+1}$ as indicated.  This completes the proof. 
\end{proof}

\begin{lem} \label{lem:decreasing}
If  $p$ is odd, $(p+1)/2 \leq j \leq p-2$ and $d$ is a positive prime-to-$p$ integer, then
\[
L_j(\{d\}) \geq L_{j+1}(\{d\}).
\]
\end{lem}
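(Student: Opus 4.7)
The plan is to compute $L_j(\{d\}) - L_{j+1}(\{d\})$ by comparing the sets $A_{i,j}$ and $A_{i,j+1}$ term by term, and then invoke Lemma~\ref{lem:sets} to see that the positive contribution outweighs the negative one.

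First I would write
\[
L_j(\{d\}) - L_{j+1}(\{d\}) = |A_{j,j}| + \sum_{i=j+1}^{p-1}\bigl(|A_{i,j}| - |A_{i,j+1}|\bigr),
\]
using $L_{i,j}(\{d\}) = |A_{i,j}|$ from Definition~\ref{Dnewlower} and separating the $i=j$ term of the first sum (which has no partner in the second).

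Next I would analyze each difference $|A_{i,j}| - |A_{i,j+1}|$ for $i \geq j+1$. Since $\tau_{i,j+1} = \tau_{i,j} - (1-\tfrac{1}{p})d < \tau_{i,j}$, the definitions give $A_{i,j} \subset A_{i,j+1}$, and the difference set is
\[
A_{i,j+1} \setminus A_{i,j} = \{n : p \mid n \text{ and } \tau_{i,j+1} < n \leq \tau_{i,j}\} = E_{i,j+1}.
\]
Therefore $|A_{i,j}| - |A_{i,j+1}| = -|E_{i,j+1}|$, and substituting back yields
\[
L_j(\{d\}) - L_{j+1}(\{d\}) = |A_{j,j}| - \sum_{i=j+1}^{p-1} |E_{i,j+1}|.
\]

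Finally, since $(p+1)/2 \leq j \leq p-2$, the hypotheses of Lemma~\ref{lem:sets} apply (with $j$ playing the role of $j$ there, noting $j \leq p-2$ ensures the union is nonempty and $j \geq (p+1)/2$ ensures the inclusion). Thus the sets $E_{i,j+1}$ are pairwise disjoint subsets of $A_{j,j}$, so
\[
\sum_{i=j+1}^{p-1}|E_{i,j+1}| = \Bigl|\bigsqcup_{i=j+1}^{p-1} E_{i,j+1}\Bigr| \leq |A_{j,j}|,
\]
which gives $L_j(\{d\}) \geq L_{j+1}(\{d\})$, as desired.

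The proof is essentially bookkeeping: the real content has already been extracted into Lemma~\ref{lem:sets}, so the only place to be careful is tracking the inclusions $A_{i,j} \subset A_{i,j+1}$ in the correct direction and matching the definition of $E_{i,j+1}$ to the symmetric difference. There is no substantive obstacle beyond verifying that Lemma~\ref{lem:sets}'s hypothesis $(p+1)/2 \leq j$ is precisely what is needed to force $\tau_{p-1,j} \leq jd$ (which is what makes $A_{j,j}$ large enough to contain all the $E_{i,j+1}$).
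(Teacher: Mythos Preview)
Your proof is correct and follows essentially the same approach as the paper: both arguments use the disjoint decomposition $A_{i,j+1} = A_{i,j} \sqcup E_{i,j+1}$ to reduce the inequality to $|A_{j,j}| \geq \sum_{i=j+1}^{p-1}|E_{i,j+1}|$, and then invoke Lemma~\ref{lem:sets}. The only cosmetic difference is that you compute the difference $L_j(\{d\}) - L_{j+1}(\{d\})$ directly, whereas the paper writes the same computation as a chain $L_j(\{d\}) = \cdots \geq L_{j+1}(\{d\})$.
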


\begin{proof}
By Definition~\ref{Dnewlower}, $A_{i,j+1} = A_{i,j} \cup E_{i,j+1}$ and this union is disjoint.  
By Lemma~\ref{lem:sets}, \begin{align*}
L_j(\{d\}) & = |A_{j,j}|  + \sum_{i=j+1}^{p-1} |A_{i,j}| \\
& = |A_{j,j}| + \sum_{i=j+1}^{p-1} \left( |A_{i,j+1}| - |E_{i,j+1}| \right)\\
&\geq \sum_{i=j+1}^{p-1} |A_{i,j+1}| = 
L_{j+1}(\{d\}). \qedhere
\end{align*}
\end{proof}

Our next goal is to prove that $L_j(\{d\}) = L_{p-j} (\{ d\})$.  

\begin{defn}
Suppose $j < p-j$.
 We set
\begin{align*}
S_1 & := \sum_{i=j}^{p-1-j} \left \lfloor \frac{id}{p} \right \rfloor;\\
S_2 &:=  \sum_{i=j}^{p-1}\left \lfloor \frac{\tau_{i,j}}{p} \right \rfloor = \sum_{i=j}^{p-1}   \left\lfloor\frac{id}{p} - \left(1-\frac{1}{p}\right)\frac{jd}{p}\right\rfloor; \ {\rm and}\\
S_3 &:= \sum_{i=p-j}^{p-1} \left \lfloor \frac{\tau_{i,p-j}}{p} \right \rfloor =  \sum_{i=p-j}^{p-1} \left\lfloor\frac{id}{p} - \left(1-\frac{1}{p}\right)\frac{(p-j)d}{p}\right\rfloor.
\end{align*}
\end{defn}

\begin{lem} \label{lem:claim1}
For $j < p-j$, we have $L_j(\{d\}) - L_{p-j}(\{d\}) = S_1 - S_2 + S_3$.
\end{lem}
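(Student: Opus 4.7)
My plan is to prove Lemma~\ref{lem:claim1} by directly unwinding the definitions of $L_j(\{d\})$ and $L_{p-j}(\{d\})$ and separating each into the ``$\lfloor id/p\rfloor$'' sum and the ``$\lfloor \tau_{i,*}/p\rfloor$'' sum. By Definition~\ref{DnewdefL}, the second of these sums in $L_j(\{d\})$ is exactly $S_2$, while the second of these sums in $L_{p-j}(\{d\})$ is exactly $S_3$. So the entire task reduces to identifying the difference between the first sums as $S_1$.

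Concretely, I would write
\begin{align*}
L_j(\{d\}) - L_{p-j}(\{d\}) &= \Bigl( \sum_{i=j}^{p-1} \left\lfloor \tfrac{id}{p} \right\rfloor - S_2 \Bigr) - \Bigl( \sum_{i=p-j}^{p-1} \left\lfloor \tfrac{id}{p} \right\rfloor - S_3 \Bigr).
\end{align*}
Since $j < p-j$, the index range $\{j, j+1, \ldots, p-1\}$ contains $\{p-j, p-j+1, \ldots, p-1\}$ as a tail, and the complement of the tail is precisely $\{j, j+1, \ldots, p-1-j\}$. Thus
\begin{align*}
\sum_{i=j}^{p-1} \left\lfloor \tfrac{id}{p} \right\rfloor - \sum_{i=p-j}^{p-1} \left\lfloor \tfrac{id}{p} \right\rfloor = \sum_{i=j}^{p-1-j} \left\lfloor \tfrac{id}{p} \right\rfloor = S_1,
\end{align*}
which gives the claimed identity $L_j(\{d\}) - L_{p-j}(\{d\}) = S_1 - S_2 + S_3$.

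There is no real obstacle here; the lemma is a straightforward bookkeeping step that isolates the three sums which will presumably be compared to one another in the subsequent lemmas leading up to Corollary~\ref{cor:maximized}. The only minor point to be careful about is the hypothesis $j < p-j$, which is what guarantees the tail $\{p-j,\ldots,p-1\}$ is a subset of $\{j,\ldots,p-1\}$ so that the subtraction of sums is valid with the positive range $\{j,\ldots,p-1-j\}$ defining $S_1$.
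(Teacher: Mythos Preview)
Your proof is correct and follows essentially the same approach as the paper: both unwind Definition~\ref{DnewdefL}, cancel the common tail of the $\lfloor id/p\rfloor$ sums to obtain $S_1$, and identify the remaining $\tau$-sums as $-S_2$ and $+S_3$.
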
 

\begin{proof}
We compute that
\begin{eqnarray*} 
L_j(\{d\}) - L_{p-j}(\{d\}) & = & \sum_{i=j}^{p-1} L_{i,j}(\{d\}) - \sum_{i=p-j}^{p-1}  L_{i,j}(\{d\}) \\
& = &  \sum_{i=j}^{p-1} \left(\left\lfloor\frac{id}{p}\right\rfloor - \left\lfloor \frac{ \tau_{i,j}}{p} \right\rfloor \right) - \sum_{i=p-j}^{p-1} \left(\left\lfloor\frac{id}{p}\right\rfloor - \left\lfloor \frac{\tau_{i,p-j}}{p} \right\rfloor \right)\\
& = & \sum_{i=j}^{p-j-1} \left\lfloor\frac{id}{p}\right\rfloor  - \sum_{i=j}^{p-1} \left\lfloor \frac{ \tau_{i,j}}{p} \right\rfloor + \sum_{i=p-j}^{p-1} \left\lfloor \frac{\tau_{i,p-j}}{p} \right\rfloor  \\
& = & S_1 - S_2 + S_3.  
\end{eqnarray*}
\end{proof}

\begin{lem} \label{lem:symmetry}
If $d$ is a positive prime-to-$p$ integer and $0 \leq j \leq p-1$, then $L_j(\{d\}) = L_{p-j} (\{ d\})$.  
\end{lem}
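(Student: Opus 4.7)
The plan is to establish $L_j(\{d\}) = L_{p-j}(\{d\})$ by expanding each side in terms of fractional parts, which gives a direct proof and renders the identity in Lemma~\ref{lem:claim1} a formal consequence rather than a prerequisite. Using $\lfloor x \rfloor = x - \{x\}$, setting $\alpha_j := (p-1)jd/p^2$ (so $\tau_{i,j}/p = id/p - \alpha_j$), and reindexing $k = i - j$ (using $jd/p - \alpha_j = jd/p^2$), one obtains
\[
L_j(\{d\}) = (p-j)\alpha_j + B_j - \Sigma_j,
\]
where $\Sigma_j := \sum_{i=j}^{p-1}\{id/p\}$ and $B_j := \sum_{k=0}^{p-1-j}\{kd/p + jd/p^2\}$. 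The one-line arithmetic $(p-j)\alpha_j = j(p-1)(p-j)d/p^2 = j\alpha_{p-j}$ shows the leading term is symmetric in $j \leftrightarrow p-j$, so it suffices to prove $B_j - B_{p-j} = \Sigma_j - \Sigma_{p-j}$.

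For the right side, I would use $\Sigma_j - \Sigma_{p-j} = \sum_{i=j}^{p-j-1}\{id/p\}$ and pair $i \leftrightarrow p - i$ via $\{id/p\} + \{(p-i)d/p\} = 1$ (valid since $p \nmid i$); every term pairs off except $i = j$, giving $\tfrac{p-2j-1}{2} + \{jd/p\}$. For the left side, I would extend $B_j$ to the full-range sum $\tilde B_j := \sum_{k=0}^{p-1}\{kd/p + jd/p^2\}$. Substituting $k = p - \ell$ in the tail $k \in [p-j, p-1]$ and applying $\{-x\} = 1 - \{x\}$ (every argument is non-integer because $\gcd(d, p) = 1$ together with $0 < j < p$ prevents $p$ from dividing the numerator) converts the tail into $j - \sum_{\ell=1}^{j}\{\ell d/p - jd/p^2\}$. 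Separately, the identity $(p-j)d/p^2 = d/p - jd/p^2$ together with the shift $\ell = k + 1$ identifies $B_{p-j}$ with exactly this inner sum, so $B_j - B_{p-j} = \tilde B_j - j$.

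The final and most delicate step is the closed-form evaluation of $\tilde B_j$. Since $\gcd(d, p) = 1$, the multiset $\{\{kd/p\} : 0 \le k \le p-1\}$ equals $\{0, 1/p, \ldots, (p-1)/p\}$, so setting $\theta := \{jd/p^2\}$ gives $\tilde B_j = \sum_{c=0}^{p-1}\{c/p + \theta\}$. Writing $\theta = a/p + \epsilon$ with $a = \lfloor p\theta \rfloor$ and $\epsilon \in [0, 1/p)$ and splitting the sum at $c = p - a - 1$ (the threshold for $c/p + \theta < 1$), the two contributions reassemble to $\tfrac{1}{p}\sum_{m=0}^{p-1} m + p\epsilon = \tfrac{p-1}{2} + \{p\theta\}$; and $\{p\theta\} = \{jd/p\}$ because $p\theta - jd/p \in \ZZ$. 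Therefore $B_j - B_{p-j} = \tfrac{p-2j-1}{2} + \{jd/p\}$, in agreement with $\Sigma_j - \Sigma_{p-j}$. The case $j = 0$ is trivial. The main obstacle I anticipate is the careful bookkeeping of fractional parts — particularly in verifying that $\gcd(d, p) = 1$ keeps every argument strictly non-integer, which is what allows the cyclic-shift structure of the residues $\{kd/p\}$ to collapse $\tilde B_j$ into a tractable closed form.
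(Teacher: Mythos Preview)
Your proof is correct and takes a genuinely different organizational route from the paper's. The paper first isolates $L_j(\{d\}) - L_{p-j}(\{d\}) = S_1 - S_2 + S_3$ (Lemma~\ref{lem:claim1}), then evaluates $S_1$ by pairing $i \leftrightarrow p-i$, manipulates $S_3$ by reindexing, and finishes by invoking Hermite's identity $\lfloor nx \rfloor = \sum_{i=0}^{n-1} \lfloor x + i/n \rfloor$ to collapse $S_2 - S_3$. You instead decompose $L_j(\{d\})$ itself into a sum $(p-j)\alpha_j + B_j - \Sigma_j$ whose first term is visibly symmetric, then match the remaining pieces directly. This bypasses Lemma~\ref{lem:claim1} entirely and replaces the appeal to Hermite by an explicit computation of the full-range sum $\tilde B_j = \sum_{c=0}^{p-1}\{c/p + \theta\} = \tfrac{p-1}{2} + \{p\theta\}$. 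That closed form is, of course, exactly the fractional-part version of Hermite's identity, so the two proofs share the same analytic core; what you gain is a more transparent structure (each of the three pieces of $L_j$ is compared to its counterpart in $L_{p-j}$) and self-containment (no named identity is needed). What the paper's route buys is that the intermediate quantity $S_1 - S_2 + S_3$ is already a single integer-valued expression in floors, which some readers may find easier to track than the several fractional-part sums $B_j$, $\tilde B_j$, $\Sigma_j$. One small point worth stating explicitly: your pairing argument for $\Sigma_j - \Sigma_{p-j}$ and the use of $\{-x\} = 1 - \{x\}$ both implicitly assume $1 \le j < p-j$; you note that $j=0$ is trivial, and the case $j > p-j$ follows by symmetry, but it would be worth saying so.
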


\begin{proof}
Without loss of generality, we can suppose that $j < p-j$.
By Lemma~\ref{lem:claim1}, it suffices to show that $S_1-S_2+S_3 =0$.  We do so by 
simplifying the expressions for $S_1$ and $S_3$.  The case $p=2$ is clear by inspection, so we assume that $p \neq 2$.  
In $S_1$, pairing $i$ with $p-i$ and using that, for $0 < i < p$,
\begin{equation} \label{eq:elementarysum}
\left \lfloor \frac{i d}{p} \right \rfloor + \left \lfloor \frac{(p-i) d}{p} \right \rfloor = d-1,
\end{equation}
we compute that
\[
S_1 = (d-1)(p-1-2j)/2 + \left \lfloor \frac{j d}{p} \right \rfloor.
\]
Next, notice that
\[
S_3 =  \sum_{i=p-j}^{p-1} \left \lfloor \frac{(i +1-p)d}{p}  -  \left(1-\frac{1}{p}\right)\frac{-jd}{p} \right \rfloor.
\]
Let $t=p-1-i$; when $i=p-1$, then $t=0$ and when $i=p-j$, then 
$t=j-1$.  Re-indexing the sum (and using that $\lfloor - a \rfloor = -1 - \lfloor a \rfloor$ when $a$ is not an integer), we obtain
\[
S_3 = \sum_{t=0}^{j-1} \left \lfloor \frac{-t d}{p} -  \left(1-\frac{1}{p}\right)\frac{-jd}{p} \right \rfloor = -j - \sum_{t=0}^{j-1} \left \lfloor \frac{t d}{p} -  \left(1-\frac{1}{p}\right)\frac{jd}{p} \right \rfloor.
\]
Therefore, we see that
\begin{equation*} 
S_2 - S_3 -j = \sum_{i=0}^{p-1} \left\lfloor\frac{id}{p} - \left(1-\frac{1}{p}\right)\frac{jd}{p}\right\rfloor = \sum_{i=0}^{p-1} \left\lfloor\frac{id}{p} \right \rfloor + \sum_{i=0}^{p-1} \left \lfloor \frac{( id \operatorname{mod}p )}{p}  - \left(1-\frac{1}{p}\right)\frac{jd}{p} \right \rfloor.
\end{equation*}
Here, as usual, $(id \operatorname{mod}p)$ denotes the least non-negative remainder when $id$ is divided by $p$.
But $id$ is a complete set of residues modulo $p$ for $i \in \{0,1,\ldots, p-1\}$, so we may re-index the second sum and obtain
\begin{equation}\label{eq:sum}
S_2 - S_3 -j = \sum_{i=0}^{p-1} \left\lfloor\frac{id}{p} \right \rfloor + \sum_{t=0}^{p-1} \left \lfloor \frac{t}{p}  - \left(1-\frac{1}{p}\right)\frac{jd}{p} \right \rfloor.
\end{equation}
Now recall Hermite's identity: for an integer $n$ and real number $x$, 
\[
\lfloor nx \rfloor = \sum_{i=0}^{n-1} \lfloor x + i/n \rfloor.
\]
Applying this to \eqref{eq:sum} with $x = -\left(1-\frac{1}{p}\right)\frac{jd}{p}$, we obtain that
\[
S_2 - S_3 = j + \sum_{i=0}^{p-1} \left(\left\lfloor\frac{id}{p} \right \rfloor + \left \lfloor \frac{(1-p) jd}{p} \right \rfloor \right).
\]
The remaining sum can be evaluated using \eqref{eq:elementarysum}, giving that
\[
S_2 - S_3 = j + \frac{(p-1)}{2} (d-1) - j d + \left \lfloor \frac{jd}{p} \right \rfloor.
\]
Therefore we see that $S_1 - S_2 + S_3 = 0$ as desired, so $L_j(\{d\}) = L_{p-j}(\{d\})$.
\end{proof}

\begin{cor} \label{cor:maximized}
With notation as above and $p$ odd:
\begin{enumerate}
\item $\displaystyle L(D) = \sum_{Q \in B} L_{\frac{p-1}{2}}(\{d_Q\})$;
\item $\displaystyle L(D) = \sum_{Q \in B}  L(\{d_Q\})$;
\item $\displaystyle L(D) =\sum_{Q \in B} \sum_{i=(p-1)/2}^{p-1} \left( \left\lfloor\frac{id_Q}{p}\right\rfloor - \left\lfloor\frac{id_Q}{p} - \left(1-\frac{1}{p}\right)\frac{(p-1)d_Q}{2p}\right\rfloor\right)$.
\end{enumerate}
\end{cor}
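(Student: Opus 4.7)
The plan is to reduce the three statements to the single-point claim that, for each positive prime-to-$p$ integer $d$, the function $j \mapsto L_j(\{d\})$ on $\{1,2,\ldots,p-1\}$ attains its maximum at $j = (p-1)/2$. Once this is shown, all three parts of the corollary follow essentially formally: since the common maximizer $j_0 = (p-1)/2$ does not depend on $d$, the maximum over $j$ commutes with the sum over $Q \in B$ in Fact~\ref{fact:lowerbound}, giving
\[
L(D) = \max_{1 \leq j \leq p-1} \sum_{Q \in B} L_j(\{d_Q\}) = \sum_{Q \in B} L_{(p-1)/2}(\{d_Q\}),
\]
which is part (1). Applying the same equality to each singleton $\{d_Q\}$ yields $L(\{d_Q\}) = L_{(p-1)/2}(\{d_Q\})$, so summing over $Q$ gives (2). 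Part (3) is then just the substitution of the explicit formula from Definition~\ref{DnewdefL} (with $j = (p-1)/2$) into the expression in (1).

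To establish the single-point claim, I combine the two preceding lemmas. Split the index range into $\{(p+1)/2, \ldots, p-1\}$ and $\{1, \ldots, (p-1)/2\}$. On the upper half, Lemma~\ref{lem:decreasing} asserts that $L_j(\{d\})$ is weakly decreasing in $j$, so
\[
L_{(p+1)/2}(\{d\}) \geq L_{(p+3)/2}(\{d\}) \geq \cdots \geq L_{p-1}(\{d\}).
\]
For the lower half, Lemma~\ref{lem:symmetry} gives $L_j(\{d\}) = L_{p-j}(\{d\})$, and as $j$ ranges over $\{1,\ldots,(p-1)/2\}$ the index $p-j$ ranges over $\{(p+1)/2,\ldots,p-1\}$, so by the previous inequality
\[
L_j(\{d\}) = L_{p-j}(\{d\}) \leq L_{(p+1)/2}(\{d\}).
\]
A final application of the symmetry lemma with $j = (p-1)/2$ gives $L_{(p-1)/2}(\{d\}) = L_{(p+1)/2}(\{d\})$, so this common value dominates $L_j(\{d\})$ for every $1 \leq j \leq p-1$, as required.

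There is no real obstacle here: the substantive content is carried by Lemmas~\ref{lem:decreasing} and \ref{lem:symmetry}, and the only mild subtlety is the need to bridge the two halves of the index range using the symmetry $j \leftrightarrow p-j$ to convert the monotonicity on the upper half into an upper bound on the lower half.
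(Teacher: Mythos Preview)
Your proposal is correct and follows essentially the same approach as the paper: both derive the maximization at $j=(p-1)/2$ from Lemmas~\ref{lem:decreasing} and~\ref{lem:symmetry}, then observe that parts (2) and (3) follow formally from (1) and the definitions. Your write-up simply spells out in more detail how the symmetry $j\leftrightarrow p-j$ transports the monotonicity on the upper half to the lower half, which the paper leaves implicit.
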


The corollary could equally well use $(p+1)/2$ instead of $(p-1)/2$.  Note that if $p=2$, by definition $\displaystyle L(D) = \sum_{Q \in B} L_1(\{d_Q\})$.

\begin{proof}
The proof of part (1) follows from Lemma~\ref{lem:decreasing} and Lemma~\ref{lem:symmetry}
because $L_j(\{d\})$ is maximized when $j=\frac{p-1}{2}$ (or when $j = \frac{p+1}{2}$) for any $d$.
The other parts are immediate by definition.
\end{proof}

Recall from Fact~\ref{fact:lowerbound} that $a_Y \geq L(D)$.  
To summarize, we have proven that:

\begin{cor} \label{cor:explicitlower}
With the notation of Notation~\ref{Ncover}, if $p$ is odd
$$a_Y \geq L(D) =\sum_{Q \in B} \sum_{i=(p-1)/2}^{p-1} \left( \left\lfloor\frac{id_Q}{p}\right\rfloor - \left\lfloor\frac{id_Q}{p} - \left(1-\frac{1}{p}\right)\frac{(p-1)d_Q}{2p}\right\rfloor\right).$$
\end{cor}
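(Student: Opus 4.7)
The plan is essentially to assemble results already established in the excerpt. The inequality $a_Y \geq L(D)$ is precisely the content of Fact~\ref{fact:lowerbound}, which the authors quote from \cite[Theorem 1.1]{bc18}. So nothing new needs to be proved on that side.

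The only remaining work is to identify the closed form for $L(D)$. By definition, $L(D) = \max_{1 \leq j \leq p-1} \sum_{Q \in B} L_j(\{d_Q\})$, and I would invoke Corollary~\ref{cor:maximized}(3), which asserts that for odd $p$, the maximum is attained at $j = (p-1)/2$, yielding
\[
L(D) = \sum_{Q \in B} \sum_{i=(p-1)/2}^{p-1} \left( \left\lfloor\frac{id_Q}{p}\right\rfloor - \left\lfloor\frac{id_Q}{p} - \left(1-\frac{1}{p}\right)\frac{(p-1)d_Q}{2p}\right\rfloor\right).
\]
Substituting this expression into Fact~\ref{fact:lowerbound} gives the displayed inequality.

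Since both ingredients are already in hand, there is no obstacle to overcome: the corollary is a direct combination of Fact~\ref{fact:lowerbound} and Corollary~\ref{cor:maximized}(3). The conceptual content — showing that the maximum in the definition of $L(D)$ is attained at $j=(p-1)/2$ and that the sum and max can be interchanged — was already dispatched by Lemmas~\ref{lem:decreasing} and \ref{lem:symmetry}. Thus the proof is a one-line citation of prior results in this section.
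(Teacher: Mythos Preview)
Your proposal is correct and matches the paper's approach exactly: the paper treats this corollary as a summary, simply citing Fact~\ref{fact:lowerbound} for the inequality $a_Y \geq L(D)$ and Corollary~\ref{cor:maximized} for the explicit formula at $j=(p-1)/2$.
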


\section{An Inductive Approach Using Formal Patching} \label{sec:patching}

The ideas used in this section to study covers of curves in positive characteristic originate with 
Grothendieck's work on the \'etale fundamental group.
Our results are a straight-forward application of the theory of formal patching (see \cite{harbaterstevenson} for a good reference).  Given several Artin-Schreier covers of the projective line branched only at one point and having minimal $a$-number, we may use formal patching to combine these together to obtain $\ZZ/p\ZZ$-covers of an arbitrary curve with arbitrary branch locus which still have minimal $a$-number.

As before, fix a prime $p$ and an algebraically closed field $k$ 
of characteristic $p$, and a smooth projective connected curve $X$ over $k$.

We restate the theorem from the introduction.  Recall from Definition~\ref{def:ad} 
that $a(d)$ is the minimal $a$-number of a 
curve which is realizable as a $\ZZ/p\ZZ$-cover of $\PP^1$ branched at a single point and having ramification invariant $d$. 

\begin{thm} \label{thm:reduction2}
Let $X$ be an ordinary curve of genus $g_X$.
Suppose $B \subset X(k)$ has cardinality $r$, 
where $r>0$.
For each $Q\in B$, let $d_Q$ be a prime-to-$p$ positive integer and
assume that 
\begin{equation} \label{Eassumegeneralrestate}
a(d_Q)=L( \{d_Q\}).
\end{equation}
Then the lower bound \eqref{eq:lowerbound} is sharp; i.e., there exists a $\ZZ/p \ZZ$-cover $Y \to X$ 
with branch locus $B$ and ramification invariants $D$ such that $a_Y = L( D )$.
\end{thm}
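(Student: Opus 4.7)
The plan is to construct $Y \to X$ by first building a $\ZZ/p\ZZ$-cover of a nodal model of $X$ (where the difficult ramification has been ``bubbled off'' onto attached copies of $\PP^1$), then smoothing via formal patching. The assumption \eqref{Eassumegeneralrestate} supplies, for each $Q \in B$, a $\ZZ/p\ZZ$-cover $\pi_Q : W_Q \to \PP^1_Q$ branched only at a single point $\infty_Q$, with ramification invariant $d_Q$ and $a$-number exactly $L(\{d_Q\})$. Let $Z$ be the nodal semistable curve obtained from $X$ and $\{\PP^1_Q\}_{Q \in B}$ by identifying each $Q \in X$ with a chosen $k$-point $P_Q \in \PP^1_Q \setminus \{\infty_Q\}$ as an ordinary double point; a standard Euler characteristic computation shows $Z$ is connected of arithmetic genus $g_X$. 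On each component I specify the cover: on $\PP^1_Q$ take $\pi_Q$, and on $X$ take the trivial cover $\bigsqcup_{i=0}^{p-1} X_i$, with $\ZZ/p\ZZ$ acting by cyclic permutation of the copies. Because $\pi_Q$ is unramified at $P_Q$, its fiber over $P_Q$ is a single free $\ZZ/p\ZZ$-orbit, and I glue each $X_i$ equivariantly to the corresponding preimage, yielding a connected nodal $\ZZ/p\ZZ$-cover $\pi_Z : W \to Z$ branched exactly at $\{\infty_Q\}_{Q \in B}$ with ramification invariants $D$.

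Next I compute $a_W$. The normalization $\tilde{W}$ is the disjoint union $\bigsqcup_{i=0}^{p-1} X_i \sqcup \bigsqcup_Q W_Q$, so by \eqref{Erelate},
\[
a_W = a_{\tilde W} = p \cdot a_X + \sum_{Q \in B} a_{W_Q} = 0 + \sum_{Q \in B} L(\{d_Q\}) = L(D),
\]
using that $X$ is ordinary (so $a_X = 0$), the hypothesis on each $W_Q$, and the additivity from Corollary~\ref{cor:maximized}(2).

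Finally, I invoke formal patching \cite{harbaterstevenson} to produce a flat family of $\ZZ/p\ZZ$-covers $\mathcal{Y} \to \mathcal{X}$ over a complete discrete valuation ring whose closed fiber is $W \to Z$ and whose generic fiber is a $\ZZ/p\ZZ$-cover $Y \to X$ with branch locus $B$ and ramification invariants $D$. The $a$-number is upper semi-continuous in such families---the kernel of the Cartier operator on $H^0(\Omega^1)$ can only grow on specialization, and this is compatible via \eqref{Erelate} with the Jacobian-theoretic definition for singular fibers. Therefore $a_Y \leq a_W = L(D)$, and combining with the lower bound $a_Y \geq L(D)$ from Fact~\ref{fact:lowerbound} yields $a_Y = L(D)$.

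The main obstacle is this last step: one must ensure that the deformation is set up so its generic fiber is the given ordinary curve $X$ with branch points at the prescribed positions $B$ (not merely some smooth curve of genus $g_X$), and justify the $a$-number semi-continuity through the singular-to-smooth specialization. Both are handled by working relative to a fixed base curve in the Harbater--Stevenson patching framework, and by the standard upper semi-continuity of $\dim \ker(\mathrm{Cartier})$ on Hodge cohomology in flat families of curves.
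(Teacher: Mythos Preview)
Your proposal is correct and follows essentially the same formal patching approach as the paper: build a nodal base by attaching a copy of $\PP^1$ to $X$ at each branch point, put the prescribed single-point covers on the $\PP^1$'s and an unramified cover on $X$, compute the $a$-number of the singular cover via \eqref{Erelate} and Corollary~\ref{cor:maximized}(2), then smooth and use semi-continuity together with Fact~\ref{fact:lowerbound}. The only difference is that when $g_X>0$ the paper takes a \emph{connected} unramified $\ZZ/p\ZZ$-cover $Y'\to X$ (which exists because $X$ is ordinary) rather than the trivial disconnected cover you use; the paper itself notes in a footnote that the disconnected cover works equally well here, so this is a cosmetic variation rather than a genuinely different route.
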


\begin{proof}
If $X \simeq \PP^1$ and $r=1$, then the result is vacuously true by the hypothesis in \eqref{Eassumegeneralrestate}.
So we assume that $r \geq 2$ if $X \simeq \PP^1$. 

For each $Q \in B$, by the hypothesis $a(d_Q) =L( \{d_Q\})$, there exists a 
$\ZZ/p\ZZ$-cover $\pi_Q: Y_Q \to \PP^1$ with $a$-number $a_Q:=L( \{d_Q\})$ and branched only at $\infty$ where it has ramification invariant $d_Q$.
Let $g_Q$ be the genus of $Y_Q$; by Fact \ref{TRH}, $g_Q = (p-1)(d_Q-1)/2$.

Next we define an unramified $\ZZ/p\ZZ$-cover $\pi': Y' \to X$.
If $g_X =0$, this can be done by letting $Y' = {\rm Ind}_{\{0\}}^{\ZZ/p\ZZ} X$, 
so that $Y'$ is a disconnected curve, having $p$ components, 
each of which is isomorphic to $X$.  
If $g_X >0$, since $X$ is ordinary, there exists an unramified $\ZZ/p\ZZ$-cover $\pi': Y' \to X$ 
such that $Y'$ is connected.\footnote{When $g_X >0$, it is also possible to work with 
a cover such that $Y'$ is disconnected; we choose to work with a connected cover since it provides additional control that might be useful in future applications.} In this case, $Y'$ is ordinary by Lemma \ref{lem:ordinarycover};
also, by Fact \ref{TRH}, the genus of $Y'$ is
$g_{Y'} = g_X + (p-1)(g_X -1)$.

We follow the strategy in \cite[Section 2.3]{booherpriesNP}
to build an Artin-Schreier cover
$\pi_\circ: Y_\circ \to X_\circ$ of singular curves, depending on the data of 
$\pi': Y' \to X$ and $\pi_Q: Y_Q \to \PP^1$ for each $Q \in B$. 
We build the singular curve $X_\circ$ by attaching a projective line to $X$ at each $Q \in B$, 
by identifying the point $0$ on $\PP^1$ and the point $Q$ in an ordinary double point.
Next, we build a singular curve $Y_\circ$ whose components are $Y'$ and $Y_Q$ for $Q \in B$, formed 
by identifying the fiber of $\pi_Q$ above $0$ and the fiber of $\pi'$ above $Q$, in $p$ ordinary double points, 
in a Galois equivariant way, for each $Q \in B$.  
There is a $\ZZ/p \ZZ$-cover $\pi_\circ: Y_\circ \to X_\circ$.  

In \cite[Proposition 2.5]{booherpriesNP}, we use \cite[9.2.8]{BLR} to prove the following:
if $g_X =0$, then ${\rm Jac}(Y_\circ)$ 
is an extension of 
${\rm Jac}(X)^p \oplus \left (\bigoplus_{Q \in B} {\rm Jac}(Y_Q) \right) $ by a torus of rank $\epsilon=(r-1)(p-1)$;  
if $g_X >0$, then ${\rm Jac}(Y_\circ)$ is an extension of 
${\rm Jac}(Y') \oplus \left (\bigoplus_{Q \in B} {\rm Jac}(Y_Q) \right) $ by a torus of rank $\epsilon=r(p-1)$.  
Furthermore, in either case, ${\rm Jac}(Y_\circ)$
is a semi-abelian variety of dimension
\[g_{Y_\circ}=g_X+(p-1)(g_X+r-1) + \sum_{Q \in B}g_{Q}.\] 
The torus does not contribute to the $a$-number.  So, by construction, the $a$-number of $Y_\circ$ is
$a_{Y_\circ} = pa_X + \sum_{Q \in B} a_Q$ if $g_X = 0$ and is
$a_{Y_\circ} = a_{Y'} + \sum_{Q \in B} a_Q$ if $g_X >0$.
If $g_X=0$, then $pa_X=0$ and if $g_X >0$ then $a_{Y'}=0$ since $Y'$ is ordinary.
In either case, $a_{Y_\circ} =  \sum_{Q \in B} L( \{d_Q\})$.
By Corollary~\ref{cor:maximized},
the $a$-number of $Y_\circ$ is $L( D )$.

By the theory of formal patching \cite{harbaterstevenson}, 
the $\ZZ/p \ZZ$-cover $\pi_\circ:Y_\circ \to X_\circ$ has a deformation to a 
$\ZZ/p \ZZ$-cover $\tilde{\pi}:\tilde{Y} \to X$ such that $\tilde{Y}$ is smooth and connected,  
$\tilde{\pi}$ is branched only above $B$, and $\tilde{\pi}$ has ramification invariant $d_Q$ above each $Q \in B$; 
in particular, this implies that $Y$ and $\tilde{Y}$ have the same genus,
see \cite[Remark 2.5]{booherpriesNP} for details. 
Let $a_{\tilde{Y}}$ be the $a$-number of $\tilde{Y}$.
Then $a_{\tilde{Y}} \leq a_{Y_\circ}$ since the $a$-number can only increase under specialization.
Also $a_{\tilde{Y}} \geq a_{Y_\circ}$ since $a_{Y_\circ} = L( D )$,
which is the lower bound for the $a$-number of $\tilde{Y}$ by Corollary~\ref{cor:explicitlower}.
This completes the proof.
\end{proof}

\begin{remark}
In \cite{booherpriesNP}, we use 
the same strategy of formal patching to realize lower bounds for Newton polygons of Artin-Schreier covers.
This strategy can also be used for Ekedahl-Oort types.
\end{remark}

\section{Computing the \texorpdfstring{$a$}{a}-number} \label{sec:calculating}

The proof of Fact~\ref{fact:lowerbound} gives a method to calculate $a_Y$ using an explicit description of the cover
$\pi:Y \to X$.  We work over a perfect field $k$, not necessarily algebraically closed, as the examples we later work with are naturally defined over $\FF_p$.  The $a$-number does not change under a field extension and the method of calculation is also independent of the base field.

The method ultimately relies on the description of $a_Y$ as the dimension of the kernel of the Cartier operator $\Cartier_Y$ on $H^0(Y,\Omega^1_Y)$, but is considerably more explicit.  
The big idea in the proof is that while the sheaf-theoretic kernel of $\Cartier_Y$ is complicated, it sits inside a much simpler sheaf denoted by $\Gscr_p$.  Using local methods, 
one obtains skyscraper sheaves $M_i$ for $0 \leq i \leq p-1$ on $\PP^1$ supported at infinity and a linear transformation 
\[
\widetilde{g} : H^0(\PP^1,\Gscr_p) \to \bigoplus_{i=0}^{p-1} H^0(\PP^1,M_i)
\]
such that the kernel of $\tilde{g}$ is $ H^0(Y,\ker \Cartier_Y)  = H^0(\PP^1, \pi_* \ker \Cartier_Y) \subset H^0(\PP^1,\Gscr_p)$. 
In this paper, we specialize this method to the basic class of examples we need to study.

\begin{defn}
A \emph{basic Artin-Schreier cover} is 
a branched $\ZZ/p\ZZ$-Galois cover $\pi : Y \to \PP^1$ that is branched only at infinity.
In this situation, we call $Y$ a \emph{basic Artin-Schreier curve}. 
\end{defn}

Given a basic Artin-Schreier cover $\pi$, the 
corresponding extension of function fields is obtained by adjoining a root of $y^p -y - f$ to $k(x)$ 
for some polynomial $f\in k[x]$.
By Artin-Schreier theory, $f \not = \alpha^p - \alpha$ for any $\alpha \in k[x]$; furthermore,
without loss of generality, we can suppose that $f(x)$ 
contains no monomials of the form $c x^{p i}$.  
Conversely, such a polynomial $f$ determines a basic Artin-Schreier cover and thus
a basic Artin-Schreier curve, which we denote by $y^p-y=f$.
By \cite[III, 7.8(c)]{stich}, 
the ramification invariant for $\pi$ at infinity is ${\rm deg}(f)$.  

We fix a basic Artin-Schreier curve $Y$ and such a polynomial $f$, and let $d = \deg(f)$.  

\begin{remark}
The Riemann-Hurwitz formula implies that the genus of $Y$ is $(p-1)(d-1)/2$, and the Deuring-Shafarevich formula implies the $p$-rank of $Y$ is $0$.  
\end{remark}

\begin{defn} \label{def:ni}
For $0 \leq i \leq p-1$, define $n_{i} := \left \lceil \frac{(p-1-i) d}{p} \right \rceil -2$.
\end{defn}

\begin{fact}[{\hspace{1sp}\cite[Lemma 3.7]{bc18}}] \label{fact:regularity}
Every differential $\omega$ on $Y$ can be written as
$\omega = \displaystyle \sum_{i=0}^{p-1} \omega_i y^i $ where $\omega_0, \ldots, \omega_{p-1}$ are differentials on ${\mathbf P}^1$.
The differential $\omega$ is regular provided each $\omega_i$ is regular away from infinity, and $\ord_\infty(\omega_i) \geq -\left \lceil \frac{(p-1-i) d}{p} \right \rceil$.  More explicitly, writing $\omega_i = h_i(x) dx$, then $\omega$ is regular provided that $h_i(x)$ is a polynomial of degree at most $n_i$ for $0 \leq i \leq p-1$.
\end{fact}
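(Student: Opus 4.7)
The plan is to use Artin-Schreier theory to obtain the decomposition, and then verify regularity locally by splitting $Y$ into the \'etale locus above $\PP^1 \setminus \{\infty\}$ and the totally ramified point $\infty_Y$ above $\infty$. The key observation for the condition at infinity will be that the orders of the summands $h_i(x) y^i \, dx$ at $\infty_Y$ are pairwise distinct modulo $p$, so that regularity of the whole sum reduces to regularity of each component.

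For the decomposition, I would use that $k(Y) = k(x)[y]/(y^p - y - f)$ has $\{1, y, \dots, y^{p-1}\}$ as a $k(x)$-basis, and that differentiating $y^p - y = f$ yields $dy = -f'(x)\,dx$ (the term $p y^{p-1} \, dy$ vanishing in characteristic $p$). Hence $\Omega^1_{k(Y)/k}$ is free of rank one over $k(Y)$ with basis $dx$, and every meromorphic differential admits a unique presentation $\omega = \sum_{i=0}^{p-1} h_i(x) y^i \, dx$ with $h_i \in k(x)$ and $\omega_i = h_i(x) \, dx$.

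For regularity away from infinity, since $f$ is a polynomial, $y$ is integral over $\O_{\PP^1, Q}$ for every $Q \neq \infty$, and $\pi$ is \'etale there. Then $\{1, y, \dots, y^{p-1}\}$ is an $\O_{\PP^1, Q}$-basis of $\O_{Y, P}$ for each $P \in \pi^{-1}(Q)$, while $dx$ generates $\Omega^1_{Y, P}$ locally, so $\omega$ is regular on $\pi^{-1}(\PP^1 \setminus \{\infty\})$ if and only if each $\omega_i$ is regular away from $\infty$, which for $\omega_i = h_i(x)\,dx$ amounts to $h_i \in k[x]$.

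At $\infty_Y$, I would compute pole orders directly. Pole-order dominance in $y^p - y = f$ forces $\ord_{\infty_Y}(y) = -d$; since $p \nmid d$, writing $y$ as $t^{-d} \cdot (\mathrm{unit})$ in a local uniformizer $t$ gives $\ord_{\infty_Y}(dy) = -d - 1$. Combining with $dy = -f'(x)\,dx$ and $\ord_{\infty_Y}(f'(x)) = -p(d-1)$ yields $\ord_{\infty_Y}(dx) = pd - p - d - 1$. Substituting,
\[
\ord_{\infty_Y}(h_i(x) y^i \, dx) = -p \deg(h_i) - id + pd - p - d - 1,
\]
whose residues modulo $p$ are $-(i+1)d - 1$, which are pairwise distinct for $0 \leq i \leq p - 1$ since $p \nmid d$. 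Hence the minimum order of the sum equals the minimum of the orders of its summands, and regularity at $\infty_Y$ decouples into $p$ independent inequalities. A routine floor/ceiling computation (using that $p \nmid (i+1)d$ for $0 \leq i \leq p - 2$) rearranges $-p \deg(h_i) - id + pd - p - d - 1 \geq 0$ into $\deg(h_i) \leq \lceil (p-1-i) d/p \rceil - 2 = n_i$, matching the asserted bound (and forcing $h_{p-1} = 0$ when $i = p - 1$). The main technical step is the distinctness of orders modulo $p$, which is what allows the regularity condition to split cleanly; everything else is standard Riemann--Hurwitz bookkeeping.
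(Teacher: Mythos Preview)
The paper does not prove this statement at all: it is recorded as a \emph{Fact} and attributed to \cite[Lemma~3.7]{bc18} without further argument. So there is no ``paper's own proof'' to compare against, only the external reference.

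Your argument is correct and is the standard one. The decomposition via the $k(x)$-basis $\{1,y,\ldots,y^{p-1}\}$ and the relation $dy=-f'(x)\,dx$ is exactly how one sets this up; the \'etale locus is handled correctly; and your computation $\ord_{\infty_Y}(dx)=pd-p-d-1$ agrees with the Riemann--Hurwitz count (the divisor of $dx$ on $Y$ is supported at $\infty_Y$ and has degree $2g_Y-2=(p-1)(d-1)-2$). The key step---observing that the orders $\ord_{\infty_Y}(h_i y^i\,dx)\equiv -(i+1)d-1\pmod p$ are pairwise distinct because $p\nmid d$---is precisely what makes the regularity condition decouple, and your floor/ceiling manipulation to reach $\deg(h_i)\le n_i$ is clean (the identity $\lceil (m+1)/p\rceil=\lfloor m/p\rfloor+1$ does the job). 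In fact you prove the biconditional, whereas the Fact as stated only asserts sufficiency; this is harmless and is what is actually used later in the paper.
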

 
We mostly work with the Cartier operator on $\PP^1$, which we denote $\Cartier$ for simplicity.  
The Cartier operator is a semi-linear operator satisfying
\begin{equation}
\Cartier\left( \sum_i a_i x^i \frac{dx}{x} \right) = \sum_i a_{pi}^{1/p} x^i \frac{dx}{x},
\end{equation}
where the coefficients $a_i$ of the series lie in $k$.  For a differential $\omega' = \sum_i b_i x^i dx$ on $\PP^1$, observe that $\Cartier(\omega') = 0$ if and only if $b_i =0$ for every $i \equiv -1 \pmod{p}$.

For a $\ZZ/p\ZZ$-cover $\pi : Y \to X$, \cite{bc18} introduces a sheaf $\Gscr_p$ on $X$ and an inclusion $H^0(Y,\Omega^1_Y) \into H^0(X,\Gscr_p)$ which can be used to control the $a$-number of $Y$.  We make this explicit when $X = \PP^1$.

\begin{lem} \label{lem:V}
There is a natural isomorphism between the vector space $H^0(\PP^1,\Gscr_p)$ from \cite[Definition 5.2]{bc18} 
and the vector space
\[
V_f:= \left\{\nu= ( \nu_0, \nu_1, \ldots, \nu_{p-1} ) : \nu_i = h_i(x) dx, \, \Cartier(\nu_i)=0, \text{ and } \deg(h_i) \leq n_i  \text{ for all } i \right\}.
\]
Equivalently, 
$\nu \in V_f$ if and only if $h_i(x) = \sum_{j=0}^{n_i} b_{i,j} x^j$ with $b_{i,j}=0$ if $j \equiv -1 \pmod{p}$,
for each $ 0 \leq i \leq p-1$.
\end{lem}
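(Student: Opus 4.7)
The plan is to unpack the definition of $\Gscr_p$ from \cite[Definition 5.2]{bc18} in the basic Artin-Schreier setting and match its global sections with the explicit description of $V_f$. Since $X = \PP^1$ and $\pi$ is branched only over $\infty$, everything can be made concrete by tracking pole orders at $\infty$.

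First, I would recall from \cite[\S 5]{bc18} that $\Gscr_p$ is a sheaf built from the decomposition of $\pi_*\Omega^1_{Y/k}$ that captures on the base $X$ the data relevant to $\ker \Cartier_Y$. Locally away from $\infty$ the cover $\pi$ is \'etale, so $\pi_*\Omega^1_{Y/k} \cong \bigoplus_{i=0}^{p-1} \Omega^1_X \cdot y^i$ and the Cartier operator decomposes accordingly; sections of $\Gscr_p$ therefore correspond to $p$-tuples $(\nu_0, \ldots, \nu_{p-1})$ of differentials on $\PP^1$, one for each power of $y$, with each $\nu_i$ lying in the kernel of the Cartier operator $\Cartier$ on $\PP^1$. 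Using Fact~\ref{fact:regularity} to import the regularity/pole conditions at $\infty$ from $Y$ to $\PP^1$, the bound $\ord_\infty(\omega_i) \geq -\lceil (p-1-i)d/p \rceil$ becomes $\deg h_i \leq n_i$ once we write $\nu_i = h_i(x) dx$ on $\AA^1$. Combined, this identifies $H^0(\PP^1, \Gscr_p)$ with $V_f$; naturality is automatic from the construction, which sends a section of $\Gscr_p$ to its components.

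For the equivalent reformulation in terms of coefficients, I would substitute $h_i(x) = \sum_j b_{i,j} x^j$ into the formula
\[
\Cartier\!\left( \sum_m a_m x^m \frac{dx}{x} \right) = \sum_m a_{pm}^{1/p} x^m \frac{dx}{x},
\]
using the rewriting $h_i(x) dx = \sum_j b_{i,j} x^{j+1} \frac{dx}{x}$. This gives $\Cartier(\nu_i) = \sum_{m \geq 1} b_{i,pm-1}^{1/p} x^m \frac{dx}{x}$, which vanishes if and only if $b_{i,j} = 0$ for all $j \equiv -1 \pmod p$, establishing the second formulation.

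The main obstacle is translating the abstract definition of $\Gscr_p$ in \cite[Definition 5.2]{bc18} into the concrete tuple-of-differentials description used here. This amounts to a local computation at each point of $\PP^1$ (trivial away from the branch locus, and dictated by the ramification data $d = \deg f$ at $\infty$), combined with the vanishing of higher cohomology of the relevant line bundles on $\PP^1$, so that global sections decompose cleanly by power of $y$. Once this identification is in place, the matching with $V_f$ and the coefficient reformulation are immediate.
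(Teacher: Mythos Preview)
Your proposal is correct and follows exactly the approach the paper takes: the paper's proof is the single sentence ``This is an immediate consequence of the definitions, Fact~\ref{fact:regularity}, and the explicit description of the kernel of $\Cartier$ on $\PP^1$,'' and your plan is precisely an unpacking of those three ingredients. Your remark about higher cohomology is not actually needed here, but it does no harm.
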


\begin{proof}
This is an immediate consequence of the definitions, Fact \ref{fact:regularity}, and the explicit description of the kernel of $\Cartier$ on $\PP^1$.
\end{proof}

Note that $V_f$ depends only on the degree of $f$.  We will often drop the subscript $f$ when 
the equation $y^p - y = f$ for a basic Artin-Schreier curve is clear from context.

\begin{remark} \label{rmk:dimV}
For later use, notice that the number of monomials in $\sum_{j=0}^{m} c_j x^j$ with $c_j =0$ when $ j\equiv -1 \pmod{p}$ is $m + 1 - \left \lfloor \frac{m +1}{p} \right \rfloor$.
Therefore
\[
\dim V_f = \sum_{i=0}^{p-1} \left(n_i+1 - \left \lfloor \frac{n_i+1}{p} \right \rfloor \right)= \sum_{i=0}^{p-1} \left( \left \lceil \frac{(p-1-i)d}{p} \right \rceil - \left \lceil \frac{(p-1-i)d}{p^2} \right \rceil \right).
\]
The last equality uses that $1 + \lfloor (x-1)/p \rfloor = \lceil x/p \rceil$.
\end{remark}

We also need a right
inverse to the Cartier operator on the projective line.

\begin{defn} \label{def:s}
 Define a function $s: \Omega^1_{\PP^1} \to \Omega^1_{\PP^1}$
 as follows; for $\omega = \sum_i c_i x^i dx$ with $c_i \in k$ define
 \[
  s \left(\sum_i c_i x^i \frac{dx}{x} \right) = \sum_j c_j^p x^{p j } \frac{dx}{x}.
 \]
\end{defn}

It is straightforward to check that $s$ is semi-linear and that $\Cartier(s(\omega)) = \omega$.

\begin{remark} \label{rmk:othercomposition}
The function $s$ is not a left inverse for $\Cartier$.  It is elementary to check that
\begin{equation} \label{eq:othercomposition}
s \left( \Cartier  \left( \sum_i a_i x^i \frac{dx}{x} \right)  \right) = \sum_{i \equiv 0 \pmod{p} } a_{i} x^{i} \frac{dx}{x}.
\end{equation}
\end{remark}

The bounds of Fact~\ref{fact:lowerbound} come from a detailed understanding of an inclusion of $H^0(Y,\ker \Cartier_Y)$ into $V_f = H^0(\PP^1,\Gscr_p)$ \cite[Definition 4.3]{bc18}.  Instead of describing this inclusion, we directly describe the image. 

\begin{defn}\label{def:omega}
For $\nu = (\nu_0,\nu_1,\ldots, \nu_{p-1}) \in V_f$, we define a differential 
$\gamma_f(\nu)$ on $Y$ as follows.
We let $\gamma_f(\nu)=\omega =\displaystyle \sum_{i=0}^{p-1} \omega_i y^i$ 
where $\omega_i$ are defined recursively as follows, starting with $i = p-1$:
\begin{itemize}
 \item Define $\omega_{p-1} = \nu_{p-1}$.
 \item Fix an integer $t$ satisfying $0 \leq t < p-1$ such that $\omega_j$ has already been defined for $t < j < p$.  Then define
 \begin{equation} \label{eq:reconstruct}
  \omega_t = \nu_t + s \left(\Cartier \left( -\sum_{j=t+1}^{p-1} \binom{j}{t} \omega_j (-f)^{j-t}  \right) \right) .
 \end{equation}
\end{itemize}
This recursively defines $\gamma_f(\nu)=\omega$.
\end{defn}

Note that the definition depends on the specific Artin-Schreier cover as it involves $f$.

\begin{defn} \label{def:gamma}
For $\nu \in V_f$, recall that $\gamma_f(\nu)$ is the differential $\omega$ constructed in Definition~\ref{def:omega}.   Set
\[
 W_f := \{ \nu \in V_f: \gamma_f(\nu) \text{ is regular}\}.
\]
\end{defn}

Both $W_f$ and $\gamma_f(\nu)$ depend on the basic Artin-Schreier cover $y^p-y =f$.  
We will drop the subscript $f$ when it is clear from context.  
It is straightforward to verify that $W_f$ is a vector space.

\begin{lem} \label{lem:calc-anumber}
Under the identification $V_f \simeq H^0(\PP^1,\Gscr_p)$ of $k$-vector spaces from Lemma~\ref{lem:V}, the injective map $H^0(\varphi) : H^0(Y,\ker \Cartier_Y) \into H^0(\PP^1,\Gscr_p)$ from \cite[Definition 4.3]{bc18}  is identified with the inclusion $W_f \subset V_f$. 
  Hence the $a$-number of $Y$ is $\dim W_f$.
\end{lem}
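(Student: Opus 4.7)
The $a$-number of $Y$ equals the dimension of the kernel of $\Cartier_Y$ acting on $H^0(Y,\Omega^1_Y)$, so once we identify the image of the injection $H^0(\varphi)$ with $W_f$ inside $V_f$, the conclusion $a_Y=\dim W_f$ is immediate. Thus the entire task is to match the inclusion coming from \cite[Definition 4.3]{bc18} with the set-theoretic inclusion $W_f\subset V_f$ coming from Definitions~\ref{def:omega} and \ref{def:gamma}.

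The plan is to treat $\gamma_f$ as the explicit inverse of $H^0(\varphi)$. First, I would unpack \cite[Definition~4.3]{bc18} in the special case $X=\PP^1$: a global section of $\Gscr_p$ is recorded by its $\nu$-component, and $H^0(\varphi)$ sends a differential $\omega=\sum_{i=0}^{p-1}\omega_i y^i$ in $\ker\Cartier_Y$ to the tuple extracted by stripping off the ``Cartier-kernel part'' of each $\omega_i$, starting from $i=p-1$ and descending. The condition $\Cartier(\nu_i)=0$ recorded in $V_f$ is forced by $\Cartier_Y(\omega)=0$; the degree bound $\deg h_i\le n_i$ is forced by the regularity of $\omega$ (Fact~\ref{fact:regularity}).

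Second, and this is the core step, I would verify directly that for every $\nu\in V_f$ the differential $\omega=\gamma_f(\nu)$ constructed by the recursion \eqref{eq:reconstruct} actually lies in the kernel of $\Cartier_Y$ viewed as a differential on the function field $k(Y)$. Using $y^p=y+f$ one expands $y^j=(y+f-f\text{-adjustments})\dots$ in order to express $\Cartier_Y(\omega_j y^j)$ in the basis $1,y,\dots,y^{p-1}$; the binomial coefficient $\binom{j}{t}$ and the sign $(-f)^{j-t}$ appearing in \eqref{eq:reconstruct} are exactly the contribution of the $y^j$-term to the $y^t$-component of $\Cartier_Y(\omega)$. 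The operator $s$ of Definition~\ref{def:s} is the canonical $p$-th-power section of $\Cartier$ on $\PP^1$, and Remark~\ref{rmk:othercomposition} shows $s\circ\Cartier$ is the projection onto $p$-power indices; this is precisely what is needed so that the $t$-th coefficient of $\Cartier_Y(\omega)$ becomes $\Cartier(\nu_t)=0$, cancelling the carry from higher terms. Injectivity of $\gamma_f$ is automatic from the recursion: solving \eqref{eq:reconstruct} for $\nu_t$ in terms of $\omega_t,\omega_{t+1},\dots,\omega_{p-1}$ recovers $\nu$ from $\omega$, and the formula obtained is exactly the one defining $H^0(\varphi)$ in \cite{bc18}.

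Third, since $\gamma_f(\nu)$ is regular iff $\nu\in W_f$ by definition, combining the two preceding steps gives $\gamma_f(W_f)=H^0(Y,\ker\Cartier_Y)$, and the identifications make the diagram with $H^0(\varphi)$ commute. The main obstacle will be the bookkeeping in the second step: one must carefully track how $\Cartier_Y$ interacts with the non-$p$-th-power factors $y^j$ and confirm that the correction term $s\bigl(\Cartier(-\sum_{j>t}\binom{j}{t}\omega_j(-f)^{j-t})\bigr)$ cancels precisely the $y^t$-coefficient of $\Cartier_Y\bigl(\sum_{j>t}\omega_j y^j\bigr)$ modulo the kernel of $\Cartier$ on $\PP^1$. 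Everything else is a formal unwinding of the definitions in \cite{bc18} together with Lemma~\ref{lem:V}, Fact~\ref{fact:regularity}, and the injectivity of $H^0(\varphi)$ already established there.
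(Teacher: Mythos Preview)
Your proposal is correct and takes essentially the same approach as the paper: both amount to unwinding the definition of $\varphi$ from \cite{bc18}. The paper's proof is a one-line appeal to \cite[Proposition~4.4]{bc18}, whereas you spell out what that unwinding entails (the verification via $y^j=(y^p-f)^j$ that $\gamma_f(\nu)\in\ker\Cartier_Y$, and that solving the recursion \eqref{eq:reconstruct} for $\nu_t$ recovers $H^0(\varphi)$), but this is elaboration of the same argument rather than a genuinely different route.
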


\begin{proof}
This is an immediate consequence of unwinding the definition of $\varphi$ 
and using \cite[Proposition 4.4]{bc18}.
\end{proof}

\begin{example}
Let $p=5$ and consider the basic Artin-Schreier curve $y^5 - y = f = x^{11}$.  Note that $n_1=5$.  To illustrate Definitions~\ref{def:omega} and \ref{def:gamma}, let us compute $\gamma_f( (0,x^j dx, 0, 0,0))$ for $j \in \{0,1,2,3,5\}$.

We immediately see that $\omega_2= \omega_3 = \omega_4 =0$, and that $\omega_1 = x^j ydx$.  The interesting calculation is computing $\omega_0$.  We have that
\[
\omega_0 = 0 + s\left( \Cartier \left( - \binom{1}{0} \omega_1 (-f) \right) \right) = s \left( \Cartier \left( x^{j+11} dx \right) \right).
\]
Now $\Cartier(x^{11+j} dx) =0$ unless $11+j \equiv 4 \pmod{5}$.  Thus $\gamma_f((0,x^j dx, 0, 0,0))$ is regular if $j = 0,1,2,5$, and $ (0,x^j dx, 0, 0,0) \in W_f$.  When $j=3$, we have that
\[
 \gamma_f(x^3 y dx)= x^3 y dx + x^{14} dx.
\]
This is not regular because of the $x^{14}dx$ term (note that $n_0 = 7$).  Hence $(0,x^3 dx, 0, 0,0) \not \in W_f$.
\end{example}

An equivalent way to describe $W_f$, which is more in line with the arguments of \cite{bc18}, is to describe it as the kernel of a linear transformation $\psi_f$.

\begin{defn}
Fix $0 \leq i \leq p-1$.
Define $m_i := (p-1-i) d -2$.  Let $r_i$ be the number of integers $s$ satisfying $s \equiv -1 \pmod{p}$ and $n_i < s \leq m_i$.  
Write these integers as 
$s_i, s_i +p, \ldots , s_i + (r_i -1)p$. 
\end{defn}

Suppose $\displaystyle \omega = \sum_i \omega_i y^i =  \gamma_f(v)$ for some $v \in V_f$.  Then writing $\displaystyle \omega_i = \sum_{j} c_{i,j} x^j dx$ with $c_{i,j} \in k$, it is straightforward to check that $c_{i,j} =0$ if $j > m_i$.  
This is a special case of \cite[Corollary~5.5]{bc18}.
Furthermore, in light of \eqref{eq:reconstruct} and Remark~\ref{rmk:othercomposition}, 
it follows that $c_{i,j} =0$ if $j \not \equiv -1 \pmod{p}$ and $n_i < j \leq m_i$.

\begin{defn} \label{def:psi}
Fix $0 \leq i \leq p-1$.    Define $\psi_{f,i} : V_f \to k^{r_i}$ by $$\psi_{f,i}(v) = (c_{i,s_i}, c_{i,s_i + p} ,\ldots c_{i,s_i + p (r_i-1)}).$$

Furthermore, set $U_f := \bigoplus_{i=0}^{p-1} k^{r_i}$ and define a map $\psi_f: V_f \to U_f$ by $$\psi_f(v) = (\psi_{f,0}(v),\psi_{f,1}(v), \ldots, \psi_{f,p-1}(v)).$$
\end{defn}

The idea is that $\psi_{f,i}$ records the coefficients of $\omega_i$ which might prevent it from being regular.  Again, note that $\psi_f$ and $U_f$ depend on the Artin-Schreier curve $y^p - y = f$; we may drop the subscripts when the curve is clear from context.

\begin{prop} \label{prop:summary}
Consider the basic Artin-Schreier curve $Y$ given by $y^p - y =f$.  Then
\[
a_Y = \dim_k W_f = \dim_k \ker(\psi_f).
\]
\end{prop}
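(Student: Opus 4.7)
The first equality $a_Y = \dim_k W_f$ is already supplied by Lemma~\ref{lem:calc-anumber}, so the only remaining work is to identify $W_f$ with $\ker(\psi_f)$. My plan is to unwind the definitions and verify that the regularity condition on $\gamma_f(\nu)$ is equivalent to the vanishing of the finitely many coordinates recorded by $\psi_f$.

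First I would fix $\nu \in V_f$ and write $\omega = \gamma_f(\nu) = \sum_{i=0}^{p-1} \omega_i y^i$ with $\omega_i = \sum_j c_{i,j} x^j\,dx$. By Fact~\ref{fact:regularity}, $\omega$ is regular on $Y$ precisely when each $\omega_i$ is regular on $\PP^1 \setminus \{\infty\}$ and satisfies $\deg \omega_i \leq n_i$. The former is automatic, since the recursion \eqref{eq:reconstruct} builds $\omega_i$ out of polynomial differentials (the $\nu_j$) and the polynomial $f$, and both $\Cartier$ and $s$ take polynomial differentials to polynomial differentials. So the task reduces to controlling $c_{i,j}$ for $j > n_i$.

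The heart of the argument will be two inductive constraints, obtained by scanning \eqref{eq:reconstruct} from $i = p-1$ downward. First, I would show $c_{i,j} = 0$ for $j > m_i$: starting from $\omega_{p-1} = \nu_{p-1}$ with $\deg \nu_{p-1} \leq n_{p-1} \leq m_{p-1}$, each product $\omega_j(-f)^{j-t}$ has degree at most $m_j + (j-t)d = m_t$, and both $\Cartier$ and $s$ are degree-nonincreasing on polynomial differentials, so inductively $\deg \omega_t \leq m_t$. This is the special case of \cite[Corollary 5.5]{bc18} already flagged before Definition~\ref{def:psi}. Second, I would show $c_{i,j} = 0$ whenever $n_i < j \leq m_i$ and $j \not\equiv -1 \pmod p$: since $\deg \nu_i \leq n_i$, the $\nu_i$-summand of \eqref{eq:reconstruct} contributes nothing in this range, and by Remark~\ref{rmk:othercomposition} the operator $s \circ \Cartier$ outputs only monomials of exponent $\equiv -1 \pmod p$ in the $x^j\,dx$ basis, so the $s(\Cartier(\cdots))$-summand contributes only at such exponents.

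Combining the two constraints, the only potentially nonzero $c_{i,j}$ with $j > n_i$ are indexed by $j \in \{s_i, s_i + p, \ldots, s_i + (r_i-1)p\}$, which are precisely the coordinates making up $\psi_{f,i}(\nu)$. Hence $\omega$ is regular if and only if $\psi_f(\nu) = 0$, giving $W_f = \ker(\psi_f)$ and completing the proposition. The hardest step will be the degree bookkeeping in the first constraint; the congruence observation in the second is essentially a direct reading of Remark~\ref{rmk:othercomposition}, and everything else follows from Fact~\ref{fact:regularity} and the description of $V_f$ in Lemma~\ref{lem:V}.
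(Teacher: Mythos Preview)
Your argument is correct and more self-contained than the paper's own proof. The paper simply identifies $V_f$, $U_f$, and $\psi_f$ with the corresponding objects $H^0(\PP^1,\Gscr_p)$, $\bigoplus_i H^0(\PP^1,M_i)$, and $\widetilde{g}$ from \cite{bc18}, and then cites \cite[Lemma~6.3]{bc18} for $W_f = \ker(\psi_f)$. You instead verify this equality directly from the recursion \eqref{eq:reconstruct}, using precisely the two constraints on the $c_{i,j}$ that the paper already records (without proof) in the paragraph preceding Definition~\ref{def:psi}. Your route has the advantage that the reader need not chase through the more general notation of \cite{bc18}; the paper's route is shorter and makes the connection to the general theory explicit.

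One small correction: it is not true that $s$ is degree-nonincreasing on polynomial differentials --- in fact $s$ multiplies exponents by $p$. What you need (and what Remark~\ref{rmk:othercomposition} gives directly) is that the \emph{composition} $s \circ \Cartier$ is degree-nonincreasing, since it merely projects onto the monomials $x^j\,dx$ with $j \equiv -1 \pmod p$. This slip does not affect the validity of your inductive bound $\deg \omega_t \leq m_t$, but you should phrase the step in terms of $s \circ \Cartier$ rather than $s$ and $\Cartier$ separately.
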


\begin{proof}
Unwinding definitions, notably \cite[Definitions 5.6, 5.8, 6.1]{bc18}, shows the following:
we may identify $U_f$ with the vector space $\bigoplus_{i=0}^{p-1} H^0(\PP^1,M_i)$ of \cite{bc18}; and the map $\psi_f$ is identified with the map $\widetilde{g}$,
under the identification of $V_f$ with $H^0(\PP^1,\Gscr_p)$.
By \cite[Lemma 6.3]{bc18}, $W_f = \ker(\psi_f)$, and hence the $a$-number of $Y$ is $\dim_k \ker(\psi_f)$.
\end{proof}

\begin{remark}
A more general class of covers is considered in \cite{bc18}, leading to much more complicated notation.  Things simplify for basic Artin-Schreier curves.  In the notation of \cite{bc18}:

\begin{itemize}

\item  As a basic Artin-Schreier cover is ramified only above infinity, we take $S = \{\infty\}$.

\item  As we are working over $\PP^1$, there is no need for the auxiliary divisors $D_i$ introduced in \cite[\S5]{bc18}.  We may instead work with an explicit map $s : \Im \Cartier_{\PP^1} \to F_* \Omega^1_{\PP^1}$ (see Definition~\ref{def:s}) that does not introduce poles.

\item  There is no need for the auxiliary point $Q'$ when working over $\PP^1$; see \cite[Remark 3.3]{bc18}, and thus $\pi_* \ker \Cartier_Y = \Gscr_{-1} = \Gscr_0$.
\end{itemize}
\end{remark}


\section{Basic Artin-Schreier Covers in Characteristic Three} \label{sec:char3}

In this section, we will prove the $p=3$ case of Theorem~\ref{thm:basiccovers}.  We can construct the necessary covers over $k = \FF_3$.  For a positive integer $d$ that is not a multiple of $3$, Corollary~\ref{cor:maximized} gives that
\begin{equation} \label{eq:lowerbound3}
L(\{d\})= \lceil 2d/3 \rceil + \lceil d/3 \rceil - \lceil d/9 \rceil -  \lceil 4d/9\rceil.
\end{equation}
(It is elementary to rewrite the floor functions as the ceiling functions which will naturally occur in our argument.)  We will show:

\begin{prop} \label{prop:p3}
For any positive integer $d$ relatively prime to $3$, there exists $f_d \in \FF_3[x]$ of degree $d$ such that the basic Artin-Schreier curve given by $y^3 -y = f_d$ has $a$-number $L(\{d\})$.
\end{prop}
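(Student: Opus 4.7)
The plan is to apply Proposition~\ref{prop:summary}, reducing the claim to constructing $f_d \in \FF_3[x]$ of degree $d$ with $\dim_k W_{f_d} = L(\{d\})$. For $p = 3$, the analysis simplifies considerably. Since $n_2 = -2$, the component $\nu_2$ of every $\nu \in V_{f_d}$ vanishes, so $\omega_2 = 0$ and $\omega_1 = \nu_1$ is automatically regular. The only condition for $\gamma_{f_d}(\nu)$ to be regular is therefore $\deg \omega_0 \leq n_0$, where $\omega_0 = \nu_0 + s(\Cartier(\nu_1 \cdot f_d))$. Because $\nu_0$ already has degree at most $n_0$, this reduces to requiring that $s(\Cartier(\nu_1 \cdot f_d))$ vanish at every ``bad position'' $p \in (n_0, m_0]$ with $p \equiv 2 \pmod 3$. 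Encoding this as a linear map $T$ from the $\nu_1$-factor of $V_{f_d}$ to $k^{r_0}$, one obtains $\dim_k W_{f_d} = N_0 + \dim \ker T$, where $N_0$ is the dimension of the $\nu_0$-factor of $V_{f_d}$, i.e., the number of integers in $[0, n_0]$ not congruent to $2 \pmod 3$.

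The first step is to verify the combinatorial identity $N_0 = L(\{d\})$, which I prove using~\eqref{eq:lowerbound3} by a short case analysis on $d \bmod 9$. The second and main step is to construct $f_d$ for which $T$ is injective, so that $\dim_k W_{f_d} = N_0 = L(\{d\})$. I take $f_d$ to be either a monomial $x^d$ or a binomial $x^d + x^e$ with $0 \leq e < d$; then the coefficient $c_{1,j}$ of $\nu_1$ contributes to $s(\Cartier(\nu_1 f_d))$ only at positions $j + d$ and $j + e$, and only when these positions are $\equiv 2 \pmod 3$. The key idea is to choose $e$ (depending on $d \bmod 3$ and $d \bmod 9$) so that every valid index $j \in [0, n_1]$ with $j \not\equiv 2 \pmod 3$ is routed by \emph{exactly one} of these two shifts to a distinct bad position. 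Concretely, for $d = 3m + 1$ the term $x^d$ handles indices $j \equiv 1 \pmod 3$, and I add $x^e$ with $e \equiv 2 \pmod 3$ lying in a small window just above $n_0$ so that the $x^e$-shifts handle indices $j \equiv 0 \pmod 3$ without colliding with the $x^d$-shifts; an analogous recipe handles $d \equiv 2 \pmod 3$, with a monomial $f_d = x^d$ sufficing when $n_1 \leq 0$.

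The main obstacle is the case-by-case verification that a valid $e$ always exists in this narrow window; this reduces to a finite check on the residues of $d$ modulo $9$. Once such an $e$ is fixed, the matrix of $T$ in the natural bases has, after a row permutation, exactly one nonzero entry per column, so $T$ is visibly injective. This gives $\dim_k W_{f_d} = L(\{d\})$, completing the proof.
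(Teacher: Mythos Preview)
Your proposal is correct and follows essentially the same route as the paper. Your identity $N_0 = L(\{d\})$ (checked mod~$9$) is equivalent to the paper's ceiling identity in Lemma~\ref{lem:ceilinglemma} together with Remark~\ref{rmk:dimV}, and your injectivity of $T$ is precisely the paper's observation that the images $\psi((0,x^i dx,0))$ have a single nonzero entry in pairwise distinct positions; the binomials $f_d$ you describe (with $e\equiv 2\pmod 3$ just above $n_0$ when $d\equiv 1\pmod 3$, and the analogous choice when $d\equiv 2\pmod 3$) coincide with the paper's explicit choices $x^d+x^{d-b_2}$ and $x^d+x^{d-b_0-1}$.
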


The choice of $f_d$ will depend on whether $d \equiv 1 \pmod{3}$ or $d \equiv 2 \pmod{3}$, and we will use Proposition~\ref{prop:summary} to compute the $a$-number.   

 Recall Definition~\ref{def:ni} and Lemma~\ref{lem:V}; we have $n_0 = \left \lceil \frac{2 d}{3} \right \rceil -2$, $n_1 = \left \lceil \frac{d}{3} \right \rceil -2$, and $n_2 = -2$  so that for any $f \in \FF_3[x]$ of degree $d$ 
\[
V_f = \left\{ (h_0 dx, h_1 dx,0) : \Cartier(h_i dx) =0, \text{ and }  \deg(h_i) \leq n_i \text{ for } i=0,1 \right \}.
\]
Furthermore, unwinding Definition~\ref{def:gamma}, there exist $\omega_0$ and $\omega_1$ such that
\begin{equation} \label{eq:reconstruct3}
\gamma_f ((h_0 dx, h_1 dx, 0)) = ( h_0dx + s \Cartier( h_1 f dx )) + h_1 dx y = \omega_0 + \omega_1 y.
\end{equation}
Whether this differential is regular is controlled completely by $s \Cartier( h_1 f dx )$.  In light of \eqref{eq:othercomposition} and Fact~\ref{fact:regularity}, it is regular if and only if $h_1 f dx$ has no terms of the form $x^i dx$ with $i \equiv -1 \pmod{3}$ and $i> n_0$.  The map $\psi_f$ from Definition~\ref{def:psi} records these coefficients.

\subsection{The Case \texorpdfstring{$d \equiv 1 \pmod{3}$}{d= 1 mod 3}}  Fix a positive integer $d$ with $d \equiv 1 \pmod{3}$, and let $L:= L(\{d\})$. 
We begin by recording a useful fact about ceiling functions.

\begin{lem} \label{lem:ceilinglemma}
We have that 
\[
\left \lceil \frac{2d}{9} \right \rceil+  \left \lceil \frac{ \lceil d/3 \rceil -2 }{3} \right \rceil + \left \lceil \frac{ \lceil d/3 \rceil -1 }{3} \right \rceil   = \left \lceil \frac{4d}{9} \right \rceil.
\]
\end{lem}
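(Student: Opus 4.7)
The plan is to reduce the claim to a three-case arithmetic check modulo $9$. Since we are in the regime $d \equiv 1 \pmod{3}$, we may write $\lceil d/3 \rceil = (d+2)/3$, which gives $\lceil d/3 \rceil - 2 = (d-4)/3$ and $\lceil d/3 \rceil - 1 = (d-1)/3$. Both are integers, so
\[
\left\lceil \frac{\lceil d/3 \rceil - 2}{3} \right\rceil = \left\lceil \frac{d-4}{9} \right\rceil, \qquad \left\lceil \frac{\lceil d/3 \rceil - 1}{3} \right\rceil = \left\lceil \frac{d-1}{9} \right\rceil.
\]
Thus the identity to prove becomes
\[
\left\lceil \frac{2d}{9} \right\rceil + \left\lceil \frac{d-4}{9} \right\rceil + \left\lceil \frac{d-1}{9} \right\rceil = \left\lceil \frac{4d}{9} \right\rceil.
\]

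Next I would observe that both sides depend only on $d \pmod 9$. Because $d \equiv 1 \pmod 3$, only three residue classes mod $9$ occur, namely $d \equiv 1, 4, 7 \pmod 9$. In each case, writing $d = 9k + r$ for the appropriate $r$, each ceiling can be evaluated explicitly in terms of $k$, and the two sides can be compared. For instance, when $d = 9k+1$ one gets $2k+1 + k + k = 4k+1 = \lceil 4d/9 \rceil$; the other two cases are analogous. This finishes the verification.

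There is no real obstacle here: the argument is a short, routine residue computation. The only mild subtlety is recognizing at the outset that the hypothesis $d \equiv 1 \pmod 3$ is what makes $\lceil d/3 \rceil - 2$ and $\lceil d/3 \rceil - 1$ integers, so that the outer ceilings can be absorbed into a single denominator of $9$; after that the three-case table is mechanical. (Alternatively, one could invoke Hermite's identity $\lfloor x \rfloor + \lfloor x + 1/3 \rfloor + \lfloor x + 2/3 \rfloor = \lfloor 3x \rfloor$ to avoid case analysis, but the direct computation is shorter.)
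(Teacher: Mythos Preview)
Your proof is correct and follows essentially the same approach as the paper: reduce to the three residue classes $d\equiv 1,4,7\pmod 9$ and verify each by direct computation. Your preliminary step of rewriting the nested ceilings as $\lceil (d-4)/9\rceil$ and $\lceil (d-1)/9\rceil$ is a minor elaboration, but the core argument is the same periodicity-plus-finite-check that the paper gives.
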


\begin{proof}
Replacing $d$ by $d+9$ adds $4$ to both sides, so it suffices to check this for $d \in \{1,4,7\}$.
\end{proof}

\begin{defn}
Let $b_2 := 3 \left \lceil \frac{ \lceil d/3 \rceil -1 }{3} \right \rceil -1$ be the largest integer congruent to $2$ modulo $3$ that is less than or equal to $\lceil d/3 \rceil$.
Define  $f_d := t^{d} + t^{d - b_2}$.
\end{defn}

\begin{prop} 
The basic Artin-Schreier curve $y^3- y = f_d$ has $a$-number $L(\{d\})$.  
\end{prop}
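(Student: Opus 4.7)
The plan is to apply Proposition~\ref{prop:summary} and compute $\dim_k W_{f_d}$ directly. First I unwind $\gamma_{f_d}$ in this setting: since $p=3$ and $n_2<0$, the recursive construction gives $\omega_2=0$, $\omega_1 = h_1\,dx$, and $\omega_0 = h_0\,dx + s(\Cartier(h_1 f_d\,dx))$ as in \eqref{eq:reconstruct3}. Because $\omega_1$ is automatically regular, a vector $(h_0\,dx,h_1\,dx,0)\in V_{f_d}$ lies in $W_{f_d}$ iff $\omega_0$ is regular on $\PP^1$.

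Writing $h_1=\sum_s b_{1,s}x^s$ and expanding $h_1 f_d = h_1(x^d+x^{d-b_2})$, the coefficient of $x^j$ in $h_1 f_d$ is $b_{1,j-d}+b_{1,j-d+b_2}$, with the convention that $b_{1,s}=0$ when $s\notin[0,n_1]$ or $s\equiv 2\pmod 3$. Since $s\Cartier$ retains only the positions $j\equiv 2\pmod 3$, and $h_0$ has no such monomials, regularity of $\omega_0$ amounts to
\[
b_{1,j-d}+b_{1,j-d+b_2}=0 \qquad \text{for every } j\equiv 2\pmod 3 \text{ with } n_0<j\leq n_1+d.
\]
Setting $t=j-d$, so that $t\equiv 1\pmod 3$, this becomes the system $b_{1,t}+b_{1,t+b_2}=0$ for $t$ in the explicit interval $(n_0-d,n_1]=(-k-2,k-1]$, where $d=3k+1$.

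The central step is to show these equations force $h_1=0$. For each allowed index $s$ (so $0\leq s\leq n_1$ and $s\not\equiv 2\pmod 3$), I would exhibit one equation that immediately kills $b_{1,s}$: if $s\equiv 0\pmod 3$, apply the equation at $t=s-b_2$ (which is $\equiv 1\pmod 3$), and the inequality $s<b_2$ puts $s-b_2$ out of range, yielding $b_{1,s}=0$; if $s\equiv 1\pmod 3$, apply the equation at $t=s$, and the inequality $s+b_2>n_1$ puts $s+b_2$ out of range, again yielding $b_{1,s}=0$. Both inequalities follow from the construction $b_2=3\lceil k/3\rceil-1$ by a short case analysis on $k\bmod 3$, and the range condition $n_0-d<t\leq n_1$ is automatic since $n_0-d=-k-2$.

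Once $h_1=0$ is established, $W_{f_d}$ collapses to the space of allowed $h_0$, so by Remark~\ref{rmk:dimV} (the $i=0$ summand) we get $\dim_k W_{f_d}=\lceil 2d/3\rceil-\lceil 2d/9\rceil$. To identify this with $L(\{d\})$ as written in \eqref{eq:lowerbound3}, I would set $m=\lceil d/3\rceil$ so that $\lceil d/9\rceil=\lceil m/3\rceil$, and apply Lemma~\ref{lem:ceilinglemma} to reduce the required identity to
\[
m-\lceil m/3\rceil = \lceil(m-1)/3\rceil+\lceil(m-2)/3\rceil,
\]
which is immediate by inspection on $m\bmod 3$. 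The main obstacle is the case analysis of the inequalities $s<b_2$ and $s+b_2>n_1$ in the central step; these inequalities are exactly the properties of $b_2$ that motivated its definition.
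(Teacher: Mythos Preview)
Your argument is correct and is essentially the paper's proof, just phrased from the kernel side rather than the image side. The paper computes $\gamma((0,x^i dx,0))$ for each admissible $i$ and observes that each such image has a single nonregular monomial $x^{i+d}dx$ (for $i\equiv 1$) or $x^{i+d-b_2}dx$ (for $i\equiv 0$), and that these monomials are pairwise distinct; hence the images under $\psi$ are linearly independent, giving the same rank lower bound. Your version writes down the regularity equations $b_{1,t}+b_{1,t+b_2}=0$ and, for each coefficient $b_{1,s}$, picks out the single equation that kills it. The two arguments use exactly the same inequalities: your ``$s<b_2$ for $s\equiv 0$'' is the paper's ``$i+d-b_2<d$'', and your ``$s+b_2>n_1$ for $s\equiv 1$'' is what makes the monomial $x^{i+d}dx$ the only nonvanishing term in the paper's computation. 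Your final identification $\lceil 2d/3\rceil-\lceil 2d/9\rceil=L(\{d\})$ via Lemma~\ref{lem:ceilinglemma} and the identity $m-\lceil m/3\rceil=\lceil(m-1)/3\rceil+\lceil(m-2)/3\rceil$ is likewise the same bookkeeping the paper does, just rearranged.
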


\begin{proof}
For $i \leq n_1 = \left \lceil \frac{d}{3} \right \rceil -2 $ with $i \not \equiv -1 \pmod{3}$, we consider
\[
\gamma( (0,x^i dx,0)) = s (\Cartier( x^{i + d}dx + x^{i + d -b_2} dx)) + x^i y dx.
\]
If $i \equiv 1 \pmod{3}$, as $i + d \equiv 2 \pmod{3}$ and $i + d-b_2 \not \equiv 2 \pmod{3}$, we see that
\begin{equation}
\gamma( (0,x^i dx,0)) = x^{i +d} dx + x^i y dx.
\end{equation}
If $i \equiv 0 \pmod{3}$, then $i + d \not \equiv 2 \pmod{3}$ and $i + d-b_2 \equiv 2 \pmod{3}$ and we see that
\begin{equation}
\gamma( (0,x^i dx,0)) = x^{i + d-b_2} dx + x^i y dx.
\end{equation}
These are not regular when $i+ d > n_0 $ and when $i + d-b_2 > n_0$ respectively.  But it is elementary to see that $d> n_0$ and $d-b_2 > n_0$, respectively, so these differentials are never regular.  Furthermore, observe that the terms of the form $x^j dx$ which cause $\gamma((0,x^i dx,0))$ not to be regular are all different; every integer of the form $i + d- b_2 $ with $0 \leq i \leq n_1 = \left \lceil \frac{d}{3} \right \rceil -2$ is less than $d$, while every integer of the form $i +d$ with $0 \leq i \leq n_1$ is greater than or equal to $d$.  
 
Re-interpreting this in terms of the map $\psi$, we see that
\[
T:=\left \{ \psi( (0,x^i dx,0)) : 0 \leq i \leq n_1 = \left \lceil \frac{d}{3} \right \rceil -2, \, i \not \equiv 2 \pmod{3} \right \}
\]
is a set of linearly independent elements in $\psi(V_f) \subset U_f$; the entries of vectors in $U_f$ correspond to coefficients of monomials in $\omega_0$, so each $\psi( (0,x^i dx,0))$ has exactly one non-zero entry, and these occur in different spots.   We see that
\[
\#T=\left \lceil \frac{ \lceil d/3 \rceil -1}{3} \right \rceil + \left \lceil \frac{\lceil d/3 \rceil -2 }{3} \right \rceil.
\]
Then we have that
\[
\dim_k \ker(\psi) \leq \dim_k V - \#T.
\]
Using the dimension of $V$ from Remark~\ref{rmk:dimV}, we conclude that
\begin{align*}
\dim_k \ker(\psi) &\leq  \lceil 2d/3 \rceil - \lceil 2 d/9 \rceil + \lceil d/3 \rceil - \lceil d/9 \rceil - \left( \left \lceil \frac{ \lceil d/3 \rceil -2 }{3} \right \rceil + \left \lceil \frac{ \lceil d/3 \rceil -1 }{3} \right \rceil \right)\\
&= \lceil 2d/3 \rceil + \lceil d/3 \rceil - \lceil 4d/9 \rceil - \lceil  d/9 \rceil = L
\end{align*}
using Lemma~\ref{lem:ceilinglemma}.  By Proposition~\ref{prop:summary}, $\dim_k \ker(\psi)$ is the $a$-number of the curve given by $y^p - y = f_d$.  Since $L$ is a lower bound on the $a$-number (Corollary~\ref{cor:explicitlower}), the curve has minimal $a$-number as desired.  \end{proof}

This proves Proposition~\ref{prop:p3} when $d \equiv 1 \pmod{3}$.

\subsection{The Case \texorpdfstring{$d \equiv 2 \pmod{3}$}{d = 2 mod 3}}

Fix a positive integer $d$ with $d \equiv 2 \pmod{3}$.

\begin{defn}
Let  $b_0 = 3 \left \lceil \frac{\lceil d/3 \rceil -2}{3} \right \rceil$ be the largest multiple of $3$ less than or equal to $\lceil d/3 \rceil$.  
Define $f_d = x^d + x^{d-b_0 -1}$.
\end{defn}

\begin{prop}
The basic Artin-Schreier curve $y^3 - y = f_d$ has $a$-number $L(\{d\})$.
\end{prop}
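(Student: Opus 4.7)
The plan is to mirror the preceding proof of the $d \equiv 1 \pmod{3}$ case. Since $d \equiv 2 \pmod 3$ and $b_0 \equiv 0 \pmod 3$, the two exponents in $f_d = x^d + x^{d-b_0-1}$ satisfy $d \equiv 2$ and $d - b_0 - 1 \equiv 1 \pmod 3$. For each integer $i$ with $0 \leq i \leq n_1$ and $i \not\equiv 2 \pmod 3$, I would compute $\gamma_{f_d}((0, x^i dx, 0))$ using \eqref{eq:reconstruct3}. When $i \equiv 0 \pmod 3$, the monomial $x^{i+d}$ has exponent $\equiv 2 \pmod 3$ and survives $\Cartier$ while $x^{i+d-b_0-1}$ is killed, yielding $\gamma_{f_d}((0,x^i dx,0)) = x^{i+d}\, dx + x^i y\, dx$; when $i \equiv 1 \pmod 3$ the roles swap, yielding $\gamma_{f_d}((0,x^i dx,0)) = x^{i+d-b_0-1}\, dx + x^i y\, dx$.

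Next, I would verify that each such differential fails to be regular by showing the offending exponent exceeds $n_0 = \lceil 2d/3 \rceil - 2$. For $i \equiv 0$ this is immediate: $i + d \geq d > n_0$. For $i \equiv 1$, we have $i + d - b_0 - 1 \geq d - b_0$, and the inequality $b_0 \leq \lceil d/3 \rceil = \lfloor d/3 \rfloor + 1$ combined with the identity $d - \lceil 2d/3 \rceil = \lfloor d/3 \rfloor$ gives $d - b_0 \geq \lceil 2d/3 \rceil - 1 > n_0$. Moreover, the two families of offending exponents lie in disjoint ranges: those arising from $i \equiv 0$ form $\{d, d+3, \ldots\} \subset [d, \infty)$, while those arising from $i \equiv 1$ lie in $[0, d)$ since $n_1 \leq b_0$ (a direct check from the definition of $b_0$). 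Therefore each $\psi_{f_d}((0, x^i dx, 0))$ has a single nonzero coordinate in $k^{r_0}$, and these nonzero coordinates occupy distinct positions, so the corresponding vectors form a linearly independent set of cardinality
\[
\#T' \;=\; \left\lceil \tfrac{\lceil d/3 \rceil - 1}{3}\right\rceil + \left\lceil \tfrac{\lceil d/3 \rceil - 2}{3}\right\rceil
\]
inside $\psi_{f_d}(V_{f_d})$.

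Combining with the dimension formula of Remark~\ref{rmk:dimV} gives the upper bound
\[
\dim_k \ker(\psi_{f_d}) \;\leq\; \lceil 2d/3 \rceil - \lceil 2d/9 \rceil + \lceil d/3 \rceil - \lceil d/9 \rceil - \#T'.
\]
To identify the right-hand side with $L(\{d\})$, I would apply the identity of Lemma~\ref{lem:ceilinglemma}; its translation-invariance argument under $d \mapsto d + 9$ applies equally well to $d \equiv 2 \pmod 3$ and only requires the additional base-case verifications $d \in \{2, 5, 8\}$. Together with Proposition~\ref{prop:summary} and the lower bound $a_Y \geq L(\{d\})$ from Corollary~\ref{cor:explicitlower}, this yields $a_Y = L(\{d\})$. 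The main technical obstacle is verifying the inequality $d - b_0 > n_0$ needed for the $i \equiv 1$ case; this rests on the elementary bound $b_0 \leq \lceil d/3 \rceil$, and the remainder of the argument is formally parallel to the $d \equiv 1 \pmod 3$ case.
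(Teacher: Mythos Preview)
Your proposal is correct and follows exactly the approach the paper intends; indeed, the paper's own proof of this proposition consists solely of the sentence ``The argument is essentially the same as in the $d \equiv 1 \pmod{3}$ case,'' and your write-up supplies precisely those details. The congruence bookkeeping (roles of $i\equiv 0$ and $i\equiv 1$ swapped relative to the $d\equiv 1$ case), the verification that $d-b_0>n_0$ via $b_0\le\lceil d/3\rceil$, the disjointness of the two exponent ranges via $n_1\le b_0$, and the extension of Lemma~\ref{lem:ceilinglemma} to the base cases $d\in\{2,5,8\}$ are all valid and exactly what is needed.
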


\begin{proof}  The argument is essentially the same as in the $d \equiv 1 \pmod{3}$ case.  
\end{proof}

This completes the proof of Proposition~\ref{prop:p3}. \qed


\section{Some Basic Artin-Schreier Covers in Characteristic Five}\label{sec:oxford}
    In this section, we will work towards the $p=5$ case of Theorem~\ref{thm:basiccovers}.  We will give explicit families depending on $d$ modulo $p^2$, which were found by exhaustive searches for binomials and trinomials $f$ such that the basic Artin-Schreier curve $y^p - y = f$  has minimal $a$-number. The $a$-numbers of each family can be computed explicitly using Lemma~\ref{lem:calc-anumber}.

   \subsection{A Binomial Family} We first focus on a family of binomial polynomials $f$.  
    
	\begin{prop} \label{prop:binomial}
	    Fix a positive integer $d$ relatively prime to $5$, and write $d= 25m+\delta$ with $0\leq \delta < 25$.  Suppose $\delta \not \in \{3,7,9,16,18,22\}$, and   
let $\delta'$ be as in Table~\ref{table:deltadelta'}.  Let $f=x^{25m+\delta} + x^{15m+\delta'}$.
Then the basic Artin-Schreier curve $y^5 - y =f$ has $a$-number $L(\{d\})$. 
	\end{prop}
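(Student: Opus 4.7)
By Proposition~\ref{prop:summary} we have $a_Y = \dim_k \ker(\psi_f)$, and by Corollary~\ref{cor:explicitlower} it suffices to prove the upper bound $\dim_k \ker(\psi_f) \leq L(\{d\})$; equivalently, $\mathrm{rank}(\psi_f) \geq \dim_k V_f - L(\{d\})$, with $\dim_k V_f$ expressible as in Remark~\ref{rmk:dimV}. Following the template of \S\ref{sec:char3}, the plan is to exhibit an explicit collection $T$ of basis vectors of $V_f$ of the form $e_{i,j} := (0,\dots,x^j dx,\dots,0)$ (with $x^j dx$ in slot $i$, for integers $0 \le j \le n_i$ with $j \not\equiv 4 \pmod 5$) whose $\psi_f$-images are linearly independent, and then to verify $\# T = \dim_k V_f - L(\{d\})$ via a ceiling-function identity.

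The first step is to unwind Definition~\ref{def:omega} in the case $p=5$. Writing $\omega_\ell = R_\ell\, dx$ and observing that $s\circ \Cartier$ acts on polynomial differentials as the projection $P_4$ onto monomials with exponent $\equiv 4 \pmod 5$, the recursion with $R_4 = 0$ simplifies to
\[
R_3 = h_3, \qquad R_2 = h_2 + 3 P_4(R_3 f),
\]
\[
R_1 = h_1 + 2 P_4(R_2 f) + 2 P_4(R_3 f^2),
\]
\[
R_0 = h_0 + P_4(R_1 f) + 4 P_4(R_2 f^2) + P_4(R_3 f^3),
\]
with all scalars reduced modulo $5$. The map $\psi_f$ then records the coefficients of those $x^k$ in $R_\ell$ with $n_\ell < k \leq m_\ell$ and $k \equiv 4 \pmod 5$.

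The second step is to compute $\psi_f(e_{i,j})$ explicitly for the binomial $f = x^A + x^B$ with $A = 25m + \delta$ and $B = 15m + \delta'$. For each basis vector, each $R_\ell$ becomes a sparse sum of monomials $c \cdot x^{j + aA + bB}$ with $a + b \le 3 - i$ and explicit nonzero coefficients $c \in \FF_5$ coming from products of binomial coefficients. Which monomials survive the successive $P_4$ projections depends only on $j$, $\delta$, and $\delta' \pmod 5$, while which ones fall into the interval $(n_\ell, m_\ell]$ depends on the precise arithmetic of $m$ and on the value of $\delta'$ selected in Table~\ref{table:deltadelta'}. The third and decisive step is then to order the coordinates of $U_f$ so that, for $(i,j)$ ranging over a suitable set $T$, the leading nonzero entry of each $\psi_f(e_{i,j})$ occupies a distinct coordinate, producing a triangular system and hence linear independence; a ceiling-arithmetic identity verifiable by reducing $d$ modulo $225$ (in the spirit of Lemma~\ref{lem:ceilinglemma}) yields $\# T = \dim_k V_f - L(\{d\})$.

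The main obstacle is the third step: because $f^2$ (resp.\ $f^3$) contributes up to two (resp.\ four) distinct monomials, the contributions to $R_0$ coming through the $R_3$, $R_2$, and $R_1$ branches interact, and I must verify that no accidental cancellation occurs among the top-degree coefficients. The prescription of $\delta'$ in Table~\ref{table:deltadelta'} is engineered precisely so that at each layer of the recursion a new monomial with exponent $\equiv 4 \pmod 5$ appears in a previously unused coordinate; the exclusion of the six residues $\delta \in \{3,7,9,16,18,22\}$ is exactly where no congruence class $\delta' \pmod 5$ simultaneously separates all the needed monomials, and those cases are dealt with by the trinomial construction of \S\ref{sec:boston}.
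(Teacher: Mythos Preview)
Your plan is correct and follows essentially the same route as the paper. Two points worth flagging as you execute it. First, because $f$ has no monomial with exponent divisible by $5$, the nested projections in your recursion collapse: $P_4\bigl(f\cdot P_4(\omega)\bigr)=\overline{f}\cdot P_4(\omega)=0$, so $R_\ell$ in fact depends linearly on $h_\ell,\,h_{\ell+1}f,\,h_{\ell+2}f^2,\,h_{\ell+3}f^3$ without any iterated $P_4$'s; this is exactly how the paper obtains the clean Case~0--3 formulas for $\gamma(e_{i,j})$. Second, the ``triangular system'' you are after is organized in the paper as Table~\ref{table:images}, and the non-overlap of leading exponents between successive blocks is precisely Lemma~\ref{lem:randominequalities}: six inequalities among $\delta,\delta'$ (each of the shape $D_5(\cdot)<U_5(\cdot)$) verified by a finite check over the pairs in Table~\ref{table:deltadelta'}. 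The final count is done by direct simplification of the ceiling expressions rather than a residue-by-residue check modulo $225$.
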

	
	\begin{table}[h!]
		\centering
		\begin{tabular}{l l | l l | l l | l l | l l}
			$\delta$ & $\delta'$ & $\delta$ & $\delta'$ & $\delta$ & $\delta'$ & $\delta$ & $\delta'$ & $\delta$ & $\delta'$\\
			\hline
			1 &  3 & 6 &  3 & 11 &  8 & 16 & -- & 21 & 13 \\
			2 &  1 & 7 & -- & 12 &  6 & 17 & 11 & 22 & -- \\
			3 & -- & 8 &  4 & 13 & 9 & 18 & -- & 23 & 14 \\
			4 &  2 & 9 & -- & 14 &  7 & 19 & 12 & 24 & 12
		\end{tabular}
		\captionsetup{width=0.85\textwidth}
		\caption{Values of $\delta$ and $\delta'$ giving basic Artin-Schreier covers with minimal $a$-number}
		 \label{table:deltadelta'}
	\end{table}

We will begin by collecting together all of our notation:  
\begin{defn} \label{def:oxfordnotation}
Fix a positive integer $d$ coprime to $5$.
\begin{enumerate}[(i)]
\item  Write $d = 25 m + \delta$ where $m$ is an integer and $\delta \in \{0, \ldots, 24\}$.

\item  For $i \in \{0,1,2,3,4\}$, we define $b_i = \ceil{ \frac{4-i}{5} \delta} -2$.

\item  Let $f := x^{25m + \delta} + x^{15 m + \delta'}$.

\item  Define $s_0 := 2\delta$, $s_1 := \delta + \delta'$, $s_2 := 2\delta'$, $c_0 := 3\delta$, $c_1 := 2\delta + \delta'$, $c_2 := \delta + 2\delta'$, and $c_3 := 3\delta'$.

\item Set $t := \ceil{\frac{b_1 + \delta - 3}{5}} - \ceil{\frac{b_0 - 3}{5}}$.

\item Finally, let $l_0 := \ceil{\frac 4 5\delta} = b_0 + 2$, $l_1 := \ceil{\frac 3 5\delta} = b_1 + 2$, $l_2 := \ceil{\frac{3}{25}\delta}$, $l_3 := \ceil{\frac{8}{25}\delta}$, and $L := l_0 + l_1 - l_2 - l_3$.

\item  For an integer $n$: let $U_5(n)$ denote the smallest integer congruent to $4$ modulo $5$ and greater than or equal to $n$; and
let $D_5(n)$ denote the largest integer congruent to $4$ modulo $5$ less than or equal to $n$.
\end{enumerate}
\end{defn}

\begin{lem} \label{lem:randominequalities}
Let $(\delta,\delta')$ be one of the pairs in Table~\ref{table:deltadelta'}.  Then we have that:

\begin{itemize}
\item  $D_5(\delta+b_1) < U_5(s_1)$;

\item  $D_5(s_1 + b_2 ) < U_5(s_0)$;

\item  $D_5(s_0 + b_2) < U_5(c_0)$;

\item  $D_5(\delta' + b_2) < U_5(\delta)$;

\item  $D_5(\delta + b_2) < U_5(s_1)$;

\item  $D_5(s_1 + b_3 ) < U(s_0)$.
\end{itemize}
\end{lem}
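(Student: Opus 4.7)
The plan is to observe that each of the six inequalities is a purely numerical condition on the pair $(\delta, \delta')$: the quantities $b_i$, $s_j$, and $c_k$ from Definition~\ref{def:oxfordnotation} depend only on $\delta$ and $\delta'$ and not on $m$ (equivalently, not on $d$ beyond its residue class modulo $25$). Hence for each of the fourteen admissible pairs in Table~\ref{table:deltadelta'} the lemma makes six concrete numerical claims, so the statement reduces to an $84$-entry verification.

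For each pair I would first tabulate $b_1, b_2, b_3$ (which depend only on $\delta$) together with $s_0 = 2\delta$, $s_1 = \delta + \delta'$, and $c_0 = 3\delta$. Then for each of the six inequalities I would form the two arguments, locate the nearest integer congruent to $4 \pmod 5$ on the appropriate side of each, and confirm strict inequality. Inequality~(3) admits a uniform proof: one has $s_0 + b_2 = 2\delta + \lceil 2\delta/5 \rceil - 2 < 3\delta = c_0$ for every $\delta \geq 1$, and combining this with the elementary bounds $D_5(n) \leq n$ and $U_5(n) \geq n$ yields
\[
D_5(s_0 + b_2) \leq s_0 + b_2 < c_0 \leq U_5(c_0).
\]

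The remaining five inequalities do not follow from such a naive comparison, because in some pairs the left argument actually exceeds the right one. For instance, with $(\delta, \delta') = (24, 12)$ one has $\delta + b_1 = 37 > 36 = s_1$, yet inequality~(1) still holds because $D_5(37) = 34 < 39 = U_5(36)$ --- a residue coincidence modulo $5$. For these I therefore expect the cleanest argument is a per-pair verification, which can be presented as a compact table. The main obstacle is not conceptual but purely clerical: each of the roughly seventy remaining sub-inequalities is trivial, but the verification must be organized carefully to avoid arithmetic errors, and a short computer check corroborates every entry.
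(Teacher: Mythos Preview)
Your proposal is correct and follows exactly the approach the paper takes: the paper's entire proof is the single sentence ``There are only finitely many pairs $(\delta,\delta')$, so we verify this case by case.'' Your write-up is in fact more detailed than the paper's, since you explain why the verification reduces to a finite check independent of $m$, give a uniform argument for the third inequality, and illustrate with the $(\delta,\delta')=(24,12)$ example why the remaining inequalities genuinely require residue information rather than a naive comparison.
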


\begin{proof}
There are only finitely many pairs $(\delta,\delta')$, so we verify this case by case.
\end{proof}

 Recalling the definition of $n_i$ from Definition~\ref{def:ni}, we see that 
\[
	 n_i = 5 (4-i) m + b_i.
\]
We  easily compute: 
	\begin{align*}
		f^2 &= x^{50m + s_0} + 2x^{40m + s_1} + x^{30m + s_2}; \ {\rm and}\\
		f^3 &= x^{75m + c_0} + 3x^{65m + c_1} + 3x^{55m + c_2} + x^{45m + c_3}.
	\end{align*}
Notice that $\delta$ and $\delta'$ are automatically coprime to $5$ as $d$ is coprime to $5$, and hence none of $s_0$, $s_1$, and $s_2$ are multiples of $5$.

From Corollary~\ref{cor:maximized}, we see that
\begin{equation}
L(\text{\{}d\text{\}}) = (15m + l_1) + (20m + l_0) - (3m + l_2) - (8m + l_3) = 24m + L.
\end{equation}

For $a \in \FF_p[x]$, let $\overline{a}$ denote the sum of all monomial terms of $a$ with degree a multiple of $p$.  Notice that for any $a, b \in \FF_p[x]$, we have $\sC{a\cdot\sC{b dx}} = \overline a \cdot \sC{b dx}$.

\begin{proof}[Proof of Proposition~\ref{prop:binomial}]
Let $Y$ be the basic Artin-Schreier curve given by $y^p - y =f$.
We begin by computing the images under $\gamma$ of the natural basis of $V_f$ given by monomials.   For a non-negative integer $i$ a straightforward computation using Definitions~\ref{def:omega} and \ref{def:gamma} gives that:
\begin{align*}
        \gamma\left(\left(x^idx, 0,0,0,0\right)\right) &= x^i\phantom{y^2}dx;\\
        \gamma\left(\left(0,x^idx,0, 0, 0\right)\right) &= x^i y\phantom{\llap{a}^2}dx +  
        \phantom{3}\sC{fx^i dx}\phantom{y^2};\\
        \gamma\left(\left(0,0, x^idx, 0,  0\right)\right) &= x^i y^2dx + 2 \sC{fx^i dx} y\phantom{\llap{a}^2}  + \phantom{3}\sC{-x^i f^2 dx+  2 f\sC{fx^idx}} \\
        &= x^i y^2dx + 2 \sC{fx^idx} y\phantom{\llap{a}^2} - \phantom{3}\sC{x^i f^2 dx}\phantom{y};\\
        \gamma\left(\left(0,0,0,x^idx, 0\right)\right) &=  x^i y^3 dx + 3\sC{fx^i} y^2dx - 3\sC{x^i f^2}ydx + \sC{x^i f^3}dx.
	\end{align*}
Here we use that
\[
\sC{ 2 f \sC{f x^i dx}} = 2 \overline{f} \sC{f x^i dx} = 0,
\]
since by our choice of $f$ there are no monomial terms with exponent a multiple of $p=5$ (and with a similar argument for the fourth equality).  We now study the images in detail, breaking into separate cases for ease of reference.

\boldheader{Case 0}
	For all $i \in \{0, \ldots, 20m + b_0 \}$ with $i \not\equiv -1 \pmod 5$, we have that $ \gamma\left(\left(x^idx,0, 0, 0,  0\right)\right) = x^idx$ is regular.
 There are $16m+ l_0 - \ceil{\frac{l_0}{5}}$ such $i$, using the elementary observation that $l_0 - \ceil{\frac{l_0}{5}} = b_0 +1 - \ceil{\frac{b_0 - 3}{5}}$.

\boldheader{Case 1}
	For $i \in \{0, \ldots, 15m + b_1\}$, $i \not\equiv -1 \pmod 5$ (of which there are $12m + l_1 - \ceil{\frac{l_1}{5}}$) 
	we have
	\begin{align*}
		\gamma\left(\left(0, x^idx,0,0, 0\right)\right) &= x^i ydx + \sC{x^{25m + i + \delta} + x^{15m + i + \delta'}}dx\\
		&= x^i ydx + \begin{cases}
			x^{15m + i + \delta'}dx,\quad &\text{if }i \equiv 4-\delta' \pmod 5\\
            x^{25m + i + \delta\phantom{'}}dx,\quad &\text{if }i \equiv 4 - \delta\phantom{'} \pmod 5\\
			0,\quad &\text{otherwise}
		\end{cases}
	\end{align*}
	since $\delta' \not\equiv \delta \pmod5$. 
	 Some of these are regular and others not: a differential of the form $x^j dx$ on $Y$ is regular provided that $j \leq 20 m + b_0$.

	\boldheader{Case 2}
	For $i \in \{0, \ldots, 10m + b_2\}$, $i \not\equiv -1 \pmod 5$, we have
	\begin{align*}
		\gamma\left(\left(0,  0,x^idx, 0, 0\right)\right) &= x^i y^2dx + 2\sC{x^{25m + i + \delta} + x^{15m + i + \delta'}}ydx\\
		&\qquad + \sC{-x^{50m + i + s_0} - 2x^{40m + i + s_1} - x^{30m + i + s_2}}dx.
	\end{align*}
	Hence, we have the cases:
	\begin{alignat*}{2}
        i & \equiv 4 - 2\delta & \pmod 5 : \qquad \gamma\left(\left(0, 0,x^idx,  0, 0\right)\right) &= x^i y^2\D x - \phantom{2}x^{50m + i + s_0}dx ; \\
		i & \equiv 4 - 4\delta & \pmod 5 : \qquad \gamma\left(\left(0,0, x^idx,  0, 0\right)\right) &= x^i y^2\D x - 2x^{40m + i + s_1}dx ;\\
        i & \equiv 4 - \phantom{1}\delta & \pmod 5 : \qquad \gamma\left(\left(0,0, x^idx,  0, 0\right)\right) &= x^i y^2\D x + 2x^{25m + i + \delta}ydx + x^{30m + i + s_2}dx ;\\
        i & \equiv 4 - 3\delta & \pmod 5 : \qquad \gamma\left(\left(0,0, x^idx,  0, 0\right)\right) &= x^i y^2dx + \phantom{2}x^{15m + i + \delta'}ydx.
	\end{alignat*}
	The images are not regular for any $i \not \equiv -1 \pmod{5}$.
	
\boldheader{Case 3}	For $i \in \{0, \ldots, 5m + b_3 \}$, $i \not\equiv -1 \pmod 5$, we have
	\begin{align*}
		\gamma\left(\left(0, 0, 0, x^idx, 0\right)\right) &= x^i y^3 + 3\sC{x^{25m + i + \delta} + x^{15m + i + \delta'}}y^2dx\\
		&\qquad -3\sC{x^{50m + i + s_0} + 2x^{40m + i + s_1} + x^{30m + i + s_2}}ydx\\
		&\qquad + \sC{x^{75m + i + c_0} + 3x^{65m + i + c_1} + 3x^{55m + i + c_2} + x^{45m + i + c_3}}dx.
	\end{align*}
	So we have the cases:
	\begin{alignat*}{2}
		i & \equiv 4 - 2\delta & \pmod 5 : \qquad \gamma\left(\left(0, 0, 0, x^i dx, 0\right)\right) &= x^i y^3dx - 3x^{50m + i + s_0}y dx + 3x^{55m + i + c_2}dx;\\
        i & \equiv 4 - 4\delta & \pmod 5 : \qquad \gamma\left(\left(0, 0, 0, x^i dx, 0\right)\right) &= x^i y^3dx - \phantom{3}x^{40m + i + s_1}ydx + x^{45m + i + c_3}dx;\\
        i & \equiv 4 - \phantom{1}\delta & \pmod 5 : \qquad \gamma\left(\left(0, 0, 0, x^i dx, 0\right)\right) &= x^i y^3\D x + 3x^{25m + i + \delta}y^2 dx - 3x^{30m + i + s_2}y dx; \\
		i & \equiv 4 - 3\delta & \pmod 5 : \qquad \gamma\left(\left(0, 0, 0, x^i dx, 0\right)\right) &= x^i y^3dx + 3x^{15m + i + \delta'}y^2 dx + x^{75m + i + c_0} dx.
	\end{alignat*}
	Again, none of these images are regular.

Having now computed the images of a basis of $V_f$ under $\gamma$ (using Fact~\ref{fact:regularity}),
 we bound the dimension of the image modulo differentials regular on $Y$ by exhibiting linearly independent $\gamma(\nu)$.  Going down the table, each of the images has a ``new term'' which will make it clear that it does not lie in the span of the previous images.

\begin{table}[ht]
\centering
\begin{tabular}{ |c|c|c|  }
\hline
Case & Congruence &  New Terms	\\
\hline
$1$ & - & $ x ^{20 m + 4}dx$ to $x^{40m + D_5(\delta + b_1)}dx$ \\
\hline
$2$ & $i \equiv 4- 4 \delta \pmod{5}$ & $x^{40m + U_5(s_1)}dx$ to $x^{50m + D_5(s_1 + b_2)}dx$ \\
\hline
$2$ & $i \equiv 4- 2 \delta \pmod{5}$ & $x^{50m + U_5(s_0)}dx$ to $x^{60m + D_5(s_0 + b_2)}dx$ \\
\hline
$3$ & $i \equiv 4- 3 \delta \pmod{5}$ & $x^{75m + U_5(c_0)}dx$ to $x^{80m + D_5(c_0 +b_3)}dx$ \\
\hline
$2$ & $i \equiv 4- 3 \delta \pmod{5}$ & $x^{15m + U_5(\delta')} y dx$ to $x^{25m + D_5(\delta'+b_2)} y dx$ \\
\hline
$2$ & $i \equiv 4- \delta \pmod{5}$ & $x^{25m + U_5(\delta)} y dx$ to $x^{35m + D_5(\delta+b_2)} y dx$ \\
\hline
$3$ & $i \equiv 4- 4 \delta \pmod{5}$ & $x^{40m + U_5(s_1)} y dx$ to $x^{45m + D_5(s_1+b_3)} y dx$ \\
\hline
$3$ & $i \equiv 4- 2 \delta \pmod{5}$ & $x^{50m + U_5(s_0)} y dx$ to $x^{55m + D_5(s_0+b_3)} y dx$ \\
\hline
$3$ & $i \equiv 4- \delta \pmod{5}$ & $x^{25m + U_5(\delta)} y^2 dx$ to $x^{30m + D_5(\delta+b_3)} y^2 dx$ \\
 \hline 
\end{tabular}	
\caption{Differentials in the Image of $\gamma$}
\label{table:images}
\end{table}

For example, in Case 1 the $x^i ydx$-term is automatically regular, while a term of the form $x^j dx$ is regular provided that $j \leq n_0 = 20m + b_0$.  Now the exponents of
$x^{15m + i + \delta'}dx$ and $x^{25m + i + \delta} dx$ where $i \equiv 4 - \delta' \pmod{5}$ or $i \equiv 4-\delta \pmod{5}$ and $0 \leq i \leq   15m+b_1$ run
through every integer $k$ that is congruent to $-1$ modulo $5$ and that is between $15 m+ \delta'$ and
	 $$25m + \delta + 15m + b_1 = 40m + \delta + b_1.$$ 
In particular, there is a  $\nu = (0,x^idx,0,0,0)$ such that $\gamma(\nu) = x^j dx + ( \ldots ) y dx$ for every such $j$ ranging from $20m+4$ to $40m + D_5(\delta + b_1)$.  	A simple calculation shows that there are $4m+t$ such values $k$. These $4m+t$ images are clearly linearly independent in the quotient of the space of differentials on $Y$ by the space of differentials regular on $Y$.

In the second row of the table, the images are still of the form $x^j dx + (\ldots) ydx$, but now $j$ is larger.  Lemma~\ref{lem:randominequalities} shows that $40m + D_5(\delta+b_1)$ is less than $40m + U_5(s_1)$, so the images from the second row do not lie in the span of the images from the first row because there are new exponents.  This pattern continues in each new row: there are new terms in the image which make it clear that the new images do not lie in the spans of the previous elements, and Lemma~\ref{lem:randominequalities} guarantees that the exponents listed in Table~\ref{table:images} are actually distinct.  Thus all of the images listed in Table~\ref{table:images} are linearly independent modulo regular differentials.
	
Note that Table~\ref{table:images} contains all of the images from Case 2 and 3, none from Case 0, and some from Case 1.  Thus the dimension of $\gamma(V_f)$ modulo regular differentials is at least the number of differentials in Case 2 and 3 plus $4m +t$.  By the rank-nullity theorem, we see that
\[
\dim W_f \leq 16m+ l_0 - \ceil{\frac{l_0}{5}} + 12m + l_1 - \ceil{\frac{l_1}{5}} - (4m + t),
\]
since there are $16m + r_0$ elements of $V_f$ of the form $(x^i dx,0,0,0,0)$ and $12m +r_1$ elements of $V_f$ of the form $(0,x^idx,0,0,0)$.  We can simplify
\begin{align*}
	\dim W_f & \leq 	16m+ l_0 - \ceil{\frac{l_0}{5}} + 12m + l_1 - \ceil{\frac{l_1}{5}} - (4m + t) \\
	& = 24m + l_0 - \ceil{\frac{l_0}{5}} + l_1 - \ceil{\frac{l_1}{5}} - \ceil{\frac{b_1 + \delta - 3}{5}} + \ceil{\frac{b_0 - 3}{5}}\\
		&= 24m + l_0 + l_1 - \ceil{\frac{l_1}{5}} - \ceil{\frac{l_1 + \delta}{5}}\\
		&= 24m + l_0 + l_1 - \ceil{\frac{l_1}{5}} - \ceil{\frac{l_1 + \delta}{5}}.
	\end{align*}
	But we have
	\begin{align*}
		\ceil{\frac{l_1}{5}} &= \ceil{\frac{\ceil{\frac{3}{5}\delta}}{5}} = \ceil{\frac{3}{25}\delta} = l_2,
		\intertext{and}
		\ceil{\frac{l_1 + \delta}{5}} &= \ceil{\frac{\ceil{\frac{3}{5}\delta + \delta}}{5}} = \ceil{\frac{\frac 3 5 \delta + \delta}{5}} = \ceil{\frac{8}{25}\delta} = l_3,
	\end{align*}
	so
	\begin{align*}
		\dim W_f &\leq 24m + l_0 + l_1 - l_2 - l_3\\
		&= 24m + L = L(\{d\}).
	\end{align*}
By Lemma~\ref{lem:calc-anumber}, $\dim W_f = a_Y$, so using Fact~\ref{fact:lowerbound} we conclude that $a_Y = L(\{d\})$ as desired.
\end{proof}

\begin{remark}
For the excluded values of $\delta$, we cannot find a basic Artin-Schreier curve $y^p - y =f$ with $f$ a binomial of degree $d = 25m+\delta$ having minimal $a$-number. 
\end{remark}
	
\subsection{Other Families}
To complete the proof of Theorem~\ref{thm:basiccovers} when $p=5$, we must consider curves $y^5 - y =f$ for more general $f$.  
In particular, we can use trinomials $f = x^{25m + \delta} + x^{15m + \delta'} + x^{5m + \delta''}$ for $\delta', \delta'' \in \integers$.
Examples of $\delta'$ and $\delta''$ which give a basic Artin-Schreier curve with minimal $a$-number are listed in Table~\ref{table:trinomial} for the values of $d = 25m + \delta$ where binomials do not suffice.

	\begin{table}[!ht]
		\centering
		\begin{tabular}{l l l | l l l}
			$\delta$ & $\delta'$ & $\delta''$ & $\delta$ & $\delta'$ & $\delta''$\\
			\hline
			3 & 4 & 1 & 16 & 14 & -1\\
			7 & 6 & 3 & 18 & 14 &  6\\
			9 & 7 & 3 & 22 &  1 &  3\\
		\end{tabular}
		\captionsetup{width=0.85\textwidth}
		\caption{Values of $\delta'$ and $\delta''$ giving minimal $a$-number} \label{table:trinomial}
	\end{table}

  The analysis follows the same approach as the proof of Proposition~\ref{prop:binomial}. Although it is more complex due to the larger number of terms involved, the image of $\gamma$ still has a very similar form -- with all terms constant in $y$ having regular images, together with all but a certain family of the terms linear in $y$. All of the other basis terms again have linear independent images modulo differentials regular on $Y$.
  As an analysis for a (different) family of trinomials is given in full detail in \S\ref{sec:boston}, we do not give the details here. 


\section{Trinomial Artin-Schreier Covers in Characteristic Five} \label{sec:boston}

In this section, we complete the proof of the $p=5$ case of Theorem~\ref{thm:basiccovers}.
We do so by studying four families of basic Artin-Schreier covers, depending on the residue class of $d$ modulo $5$.  These realize the lower bound for sufficiently large $d$, and are listed in Table~\ref{table:families}. 

\begin{table}[ht]
\centering
\begin{tabular}{ |c||c|  }
\hline
$d$ & $f$ for family \\\hline\hline
$5n+1$ & $x^{5n+1}+x^{5n-1}+x^{5n-5\floor{\frac{(n+2)2}{5}}+4}$  \\ \hline 
$5n+2$ & $x^{5n+2}+x^{5n+1}+x^{5n-5\floor{\frac{(n-1)2}{5}}-1}$ \\\hline
$5n+3$ & $x^{5n+3}+x^{5n+2}+x^{5n-5\floor{\frac{(n-1)2}{5}}-1}$ \\ \hline
$5n+4$ & $x^{5n+4}+x^{5n+2}+x^{5n-5\floor{\frac{(n+1)2}{5}}+3}$ \\ \hline
\end{tabular}
\caption{Polynomials for families whose $a$-numbers realize the lower bound}
\label{table:families}
\end{table}

\begin{remark}
Table~\ref{table:families} assumes that $n\geq 1$.  For the remaining cases, use $x^2$, $x^3 + x^2$, and $x^4$ for $f$. 
\end{remark}

\begin{prop} \label{prop:p5trinomial}
Fix a positive integer $d$ co-prime to $5$, and let $Y$ be the basic Artin-Schreier curve given by $y^5 - y = f$ where $f$ is listed in Table~\ref{table:families}.  Then $Y$ has $a$-number $L(\{d\})$.
\end{prop}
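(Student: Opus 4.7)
The plan is to mimic closely the strategy of Section~\ref{sec:oxford}, proving each of the four congruence classes $d \bmod 5$ as a separate (but parallel) calculation, and in each case using Proposition~\ref{prop:summary} together with Corollary~\ref{cor:explicitlower} to sandwich $a_Y = \dim_k W_f$ between $L(\{d\})$ (the lower bound) and an upper bound that we will compute directly. Throughout, write $d = 5n+\epsilon$ with $\epsilon \in \{1,2,3,4\}$, and let $f$ be the trinomial listed in Table~\ref{table:families}. The first step is to compute $f^2$, $f^3$, and $f^4$ explicitly as sums of $6$, $10$, and $15$ monomials respectively. Since $f$ is a trinomial whose three exponents are pairwise distinct mod $5$ (this has to be checked for each family, using that $\lfloor 2(n\pm 1)/5\rfloor$ shifts the third exponent by a predictable residue), none of the monomials in $f^k$ ($k=2,3,4$) have exponent divisible by $5$, which ensures that the Cartier-composition terms $\sC{f^k \cdot \sC{\cdots}}$ that appear in Definition~\ref{def:omega} vanish just as in Section~\ref{sec:oxford}.

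Next I would apply $\gamma$ to the natural monomial basis of $V_f$. As in the binomial case, $\gamma((x^i dx, 0, 0, 0, 0)) = x^i dx$ is automatically regular on $Y$, contributing one dimension of $W_f$ for each $i \in \{0,\ldots,n_0\}$ with $i \not\equiv -1 \pmod 5$. For the four remaining bases $(0,\ldots,x^i dx,\ldots,0)$ at positions $j=1,2,3,4$, the image $\gamma((\ldots,x^i dx,\ldots))$ has leading piece $x^i y^j dx$ (automatically regular when $i \leq n_j$), together with a correction of strictly lower $y$-degree given by $s\Cartier(\cdot)$ applied to sums of monomials in $f, f^2, \ldots, f^j$ multiplied by $x^i$. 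For each such $i$, exactly one residue class modulo $5$ of the exponent $i + (\text{exponent of a monomial in }f^\ell)$ survives the Cartier operator, so the correction is itself a short sum of monomials whose exponents are determined by $i \bmod 5$.

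The heart of the argument is then to build, as in Table~\ref{table:images}, a list of basis vectors $\nu$ whose images $\gamma(\nu)$ contain a ``new'' monomial of the form $x^j y^\ell dx$ with $j > n_\ell$, and to check that these new monomials are all distinct. For each family I will tabulate, for each $\ell \in \{0,1,2,3\}$ and each residue of $i$ mod $5$, the range of exponents of the irregular terms produced by $\gamma$, and verify by a finite case analysis (depending only on $n \bmod 5$) the analogue of Lemma~\ref{lem:randominequalities}, namely that consecutive ranges listed in the table are disjoint so the new monomials really are linearly independent modulo the subspace of regular differentials on $Y$. Counting these linearly independent images gives the number of basis vectors $\nu$ outside $\ker(\psi_f) = W_f$, and rank-nullity then yields $\dim W_f \leq \dim V_f - (\text{count})$.

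The final step is the arithmetic verification that this upper bound equals $L(\{d\})$, using the explicit formulas $\dim V_f = \sum_i (n_i+1-\lfloor (n_i+1)/5\rfloor)$ from Remark~\ref{rmk:dimV} and the expression for $L(\{d\})$ from Corollary~\ref{cor:explicitlower}. Combined with the lower bound $a_Y \geq L(\{d\})$, this gives equality. The main obstacle I expect is purely bookkeeping: since the third exponent of $f$ involves $\lfloor 2(n\pm 1)/5 \rfloor$, the ranges of ``new'' monomials depend on $n \bmod 5$ in a way that forces a sub-case analysis (five sub-cases within each of the four families). However, once the tables of ranges are written down, checking their disjointness and summing the counts is a finite mechanical task, entirely analogous to the binomial argument in the proof of Proposition~\ref{prop:binomial}.
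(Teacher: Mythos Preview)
Your overall strategy matches the paper's: compute $\gamma$ on monomial basis elements, exhibit enough linearly independent non-regular images, and use rank--nullity with Corollary~\ref{cor:explicitlower} to pin down $a_Y$. However, two of your structural claims are false for these trinomials, and this creates a genuine gap.

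First, the three exponents of $f$ are \emph{not} pairwise distinct modulo $5$. In the $d=5n+1$ family, for instance, the exponents $5n+1$, $5n-1$, and $\alpha := 5n - 5\lfloor 2(n+2)/5\rfloor + 4$ are congruent to $1,4,4 \pmod 5$. (You do still have $\overline{f}=0$, so the nested $s\Cartier$ terms vanish as you claim, but not for the reason you give.) Second, and more seriously, it is therefore not true that ``exactly one residue class survives'' the Cartier operator: for example, when $j\equiv 1 \pmod 5$ the image $\gamma(\nu_{2,j})$ contains the three non-regular terms $-x^{10n-2+j}dx - 2x^{5n-1+\alpha+j}dx - x^{2\alpha+j}dx$. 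The largest of these, $x^{10n-2+j}dx$, ranges over exponents that overlap with those already produced by another congruence class ($j\equiv 2$), so you cannot simply read off ``new'' monomials from the raw basis images and declare them distinct.

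The paper handles this by replacing $\nu_{2,j}$ (and analogously $\nu_{3,j}$) with a corrected element $\nu_{2,j}-\eta_{2,j}$, where $\eta_{2,j}$ is an explicit linear combination of other basis vectors chosen to cancel the unwanted top term, so that the \emph{middle} term $x^{5n-1+\alpha+j}dx$ becomes the leading non-regular term. Constructing $\eta_{2,j}$ requires its own case analysis (including a handful of exceptional $j$ near the boundary where the obvious choice falls outside the allowed range), and only after this correction can one set up the ordering argument for linear independence. Your proposal does not anticipate this step, and without it the ``disjoint ranges'' table you describe will not close up. Also, $f^4$ never enters: since $n_4=-2$ the fourth component of every $\nu\in V_f$ is zero, so the recursion in Definition~\ref{def:omega} only ever involves $f,f^2,f^3$.
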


The proofs for each family are very similar, so we only show the proof for the $d=5n+1$ case.  In this case, we assume $ n \geq 3$ and $Y$ is given by
$$y^5 - y = f = x^{5n+1}+x^{5n-1}+x^{5n-5\floor{\frac{(n+2)2}{5}}+4}.$$

When $p=5$ and $d = 5n+1$, Corollary~\ref{cor:explicitlower} gives 
$$L(\{d\})=9n-\left( \floor{\frac{2n}{5}}+\floor{\frac{7n+1}{5}}+\floor{\frac{12n+2}{5}} \right).$$

Recalling Definition~\ref{def:ni}, we see that $n_0=4n-1$, $n_1=3n-1$, $n_2=2n-1$, $n_3=n-1$ and $n_4=-2$. 
Thus the vector space $V$ from Lemma~\ref{lem:V} is
\[
V  = \left\{ ( \nu_0, \nu_1, \nu_2, \nu_{3},0 ) : \nu_i = h_i dx, \, \Cartier(\nu_i)=0, \text{ and } \deg(h_i) \leq (4-i) n -1  \text{ for } 0 \leq i \leq 3 \right\}.
\]

We will compute the $a$-number of $Y$ using the linear transformation $\psi : V \to U$ from Definition~\ref{def:psi}.  The $a$-number of $Y$ is the dimension of the kernel of $\psi$ by Proposition~\ref{prop:summary}, and the rank nullity theorem says that 
$$\dim(\ker(\psi))+\dim(\Ima(\psi)) = \dim(V).$$
We will give a lower bound on $\dim(\Ima(\psi))$, thus obtaining an upper bound for the $a$-number which will equal the lower bound $L(\{d\})$.  Hence the $a$-number of $Y$ will be $L(\{d\})$ as desired.  

\subsection{A Proof}
We aim to produce a large number of elements of $V$ whose images under $\psi$ are linearly independent.  Equivalently, we wish to produce differentials  in the image of $\gamma$ that are linearly independent modulo regular differentials.  It will be helpful to introduce the following notation for elements of $V$.

\begin{defn} \label{def:nuab}
For integers $i$ and $j$ with $ 0 \leq i \leq 3$ and $0\leq j \leq (4-i) n -1$ with $j \not \equiv -1 \pmod{5}$, let 
$\nu_{i,j}$ be the element of $V$ which has $x^j dx$ in the $i$th component and $0$ elsewhere.
\end{defn}

It is immediate that $\nu_{i,j}$ form a basis for $V$; see Remark~\ref{rmk:dimV}.
In order to analyze linear independence in the image of $\gamma$, we also introduce an ordering on monomial differentials.  

\begin{defn} \label{def:ordering}
For integers $a_1, a_2, b_1, b_2$ with $0 \leq b_1, b_2 \leq 4$, say that $x^{a_1} dx y^{b_1}  < x^{a_2}dx y^{b_2} $ if $b_1 < b_2$ or if $b_1 = b_2$ and $a_1 > a_2$. 
\end{defn}

Recalling Definition~\ref{def:omega}, we see that:
\begin{align}
\gamma(\nu_{0,j}) &= x^j dx  \label{eq:nu0}; \\
\gamma(\nu_{1,j}) &= s\left(\mathscr{C}\left(x^{5n+1+j}dx+x^{5n-1+j}dx+x^{5n-5\floor*{\frac{2(n+2)}{5}}+4+j}dx\right)\right) + x^j dx y \label{eq:nu1};\\
\gamma(\nu_{2,j}) &= s ( \Cartier (\omega_{j,0}))  + s\left( \Cartier \left( \omega_{j,1} \right)\right) y + x^j dx y^2; \label{eq:nu2} 
\end{align}
where we have 
\[
\omega_{j,1} := 2 x^{5n+1+j}dx+x^{5n-1+j}dx+x^{5n-5\floor*{\frac{2(n+2)}{5}}+4+j}dx,
\]
and we have 
\begin{align*}
\omega_{j,0} :=  s \left( \Cartier \left(  \omega_{j,1} \right) \right) \left(x^{5n+1+j}dx+x^{5n-1+j}dx+x^{5n-5\floor*{\frac{2(n+2)}{5}}+4+j}dx \right) -  x^{10n+2+j}dx-2x^{10n+j}dx \\-2x^{10n-5\floor{\frac{2(n+2)}{5}}+5+j}dx-x^{10n-2+j}dx -2x^{10n-5\floor{\frac{2(n+2)}{5}}+3+j}dx
- x^{10n-10\floor{\frac{2(n+2)}{5}}+8+j}dx.
\end{align*} 

Recalling Remark~\ref{rmk:othercomposition}, we see that $\gamma(\nu_{0,j})$ is always regular, and that $\gamma(\nu_{1,j})$ and $\gamma(\nu_{2,j})$ are not regular if they have a term (not involving $y$) of the form $x^i dx$ with $i > 4n-1$ and $i \equiv 4 \pmod{5}$, or of the form $x^i dx y$ with $i> 3n-1$ and $i \equiv 4 \pmod{5}$. 

To simplify notation, we set
\begin{equation}
\alpha := 5n-5\left \lfloor {\frac{2(n+2)}{5}} \right \rfloor+4.
\end{equation}
We often break into cases based on $n$ modulo $5$ to obtain clean answers.  We write $n = 5k+\delta$ with $\delta \in \{0,1,2,3,4\}$.  For reference, Table~\ref{table:alpha} expresses $\alpha$ in terms of $k$ in each of these cases.

\begin{table}[ht]
\begin{tabular}{| c | c | c| c| c| c|}
\hline
 & $n = 5k$ & $n = 5k+1$ & $n = 5k+2$ & $n=5k+3$ & $n = 5k+4$\\
\hline
$\alpha$ & $15k+4$ & $15 k +4 $ & $15k +9$ & $15k+9$ & $15k+14$\\
\hline
\end{tabular}
\caption{$\alpha$ in terms of $k$} \label{table:alpha}
\end{table}

The next step is to write down $\gamma(\nu)$ for some well-chosen types of $\nu \in V$; we will do this type by type, and record the results in Table~\ref{table:imagesummary}.  

The images are differentials which are not regular, and the images (with appropriate restrictions on $j$) will be linearly independent in the quotient of the space of differentials on $Y$ by the space of regular differentials on $Y$.  The column $\gamma(\nu)$ shows the ``largest relevant terms'': the omitted terms are either terms which are regular or which are smaller in the ordering of Definition~\ref{def:ordering}. 
Table~\ref{table:counting} collects the number of images of each type.   

\begin{table}[ht]
\begin{tabular} {| c | c | c | c | c |}
\hline 
Type & $\nu$ & Congruence & Potential Range & $\gamma(\nu)$  \\
\hline
A & $\nu_{2,j}$ & $ j \equiv 2 \pmod{5}$ & $0 \leq j \leq 2n-1$ & $- x^{10n + 2 + j}dx + \ldots $ \\
\hline
B & $\nu_{2,j} - \eta_{2,j}$ & $j \equiv 1 \pmod{5}$ & $0 \leq j \leq 2n-1$ & $- 2 x^{5n-1 + \alpha +j} dx+ \ldots$ \\
\hline
C & $\nu_{1,j}$ & $j \equiv 3 \pmod{5}$ &  $0 \leq j \leq 3n-1$ & $x^{5n+1+j} dx + \ldots$ \\
\hline
D & $\nu_{1,j}$ & $ j \equiv 0 \pmod{5}$ & $0 \leq j \leq 3n-1$ & $x^{\alpha +j }dx + \ldots $\\
\hline
E & $\nu_{3,j}$ & $ j \equiv 2 \pmod{5}$ & $0 \leq j \leq n-1$ & $2 x^{10n + 2 + j}dx  y+ \ldots $ \\
\hline
F & $\nu_{3,j} - \eta_{3,j}$ & $j \equiv 1 \pmod{5}$ & $0 \leq j \leq n-1$ & $4 x^{5n-1 + \alpha +j} dx y+ \ldots$ \\
\hline
G & $\nu_{2,j}$ & $ j \equiv 3 \pmod{5}$ & $0 \leq j \leq 2n-1$ & $2 x^{5n+1+j} dx  y + \ldots$ \\
\hline
H & $\nu_{2,j}$ & $j \equiv 0 \pmod{5}$ & $0 \leq j \leq 2n-1$ & $ 2x ^{\alpha+j} dx  y + \ldots$ \\
\hline
I & $\nu_{3,j}$ & $j \equiv 3 \pmod{5}$ & $0 \leq j \leq n-1$ & $ 3x ^{5n+1+j} dx  y^2 + \ldots$ \\
\hline
J & $\nu_{3,j}$ & $j \equiv 0 \pmod{5}$ & $0 \leq j \leq n-1$ & $ 3x ^{\alpha+j} dx  y^2 + \ldots$ \\
\hline
\end{tabular}
\caption{Elements in the Image of $\gamma$} \label{table:imagesummary}
\end{table}

\begin{table} [ht]
\begin{tabular}{| c | c | c| c| c| c|}
\hline
Case & $n = 5k$ & $n = 5k+1$ & $n = 5k+2$ & $n=5k+3$ & $n = 5k+4$\\
\hline
A & $2k$ & $2k$ & $2k+1$ & $2k+1$ & $2k+2$\\
\hline
B & $2k$ & $2k+1$ & $2k+1$ & $2k+1$ & $2k+2$\\
\hline
C & $3k$ & $3k$ & $3k+1$ & $3k+2$ & $3k+2$\\
\hline
D & $k$ & $k+1$ & $k+1$ & $k+1$ & $k+1$\\
\hline
E & $k$ & $k$ & $k$ & $k+1$ & $k+1$\\
\hline
F & $k$ & $k$ & $k+1$ & $k+1$ & $k+1$\\
\hline
G & $2k$ & $2k$ & $2k+1$ & $2k+1$ & $2k+1$\\
\hline
H & $2k$ & $2k+1$ & $2k+1$ & $2k+2$ & $2k+2$\\
\hline
I & $k$ & $k$ & $k$ & $k$ & $k+1$\\
\hline
J & $k$ & $k+1$ & $k+1$ & $k+1$ & $k+1$\\
\hline
Total & $16k$ & $16k+4$ & $16k+8$ & $16k+11$ & $16k+14$\\
\hline
\end{tabular}
\caption{Number of Images of Each Type}
\label{table:counting}
\end{table}

\boldheader{Type A}  Recall that to define $\nu_{2,j}$, we need that $0 \leq j \leq 2n-1$.  The congruence condition $j \equiv 2 \pmod{5}$ means that in \eqref{eq:nu2}, only the first term in $\omega_{j,0}$ has exponent congruent to $4$ modulo $5$.  Thus we have that
\[
\gamma(\nu_{2,j}) = - x^{10n+2+j} dx + x^j dx y^2.
\]
The first term is not regular as the exponent is too big (larger than $4n-1$), while the second is regular.  Thus the first term is the ``most significant term'' and the exponent $ 10n+2+j$ ranges from $10n+4$ to the largest integer congruent to $4$ modulo $5$ and less than or equal to $12n-1$.  Table~\ref{table:A} shows the range and number of exponents.

\begin{table}[ht]
\centering
\begin{tabular}{|c|c|c|c|} 
 \hline
 $n$ & Minimal Exponent & Maximal Exponent & Number\\
 \hline
 $5k$ & $50k+4$ & $60k-1$ & $2k$\\ \hline
 $5k+1$ & $50k+14$ & $ 60k+9$ & $2k$\\ \hline
 $5k+2$ & $50k+24$ & $ 60k+24$ & $2k+1$\\ \hline
 $5k+3$ & $50k+34$ & $ 60k+34$ & $2k+1$\\ \hline
 $5k+4$ & $50k+44$ & $ 60k+49$ & $2k+2$\\ \hline
\end{tabular}
\caption{Type A}
\label{table:A}
\end{table}

\boldheader{Type B}  From the congruence condition, we have that
\[
\gamma(\nu_{2,j}) = - x^{10n-2+j} dx - 2 x^{\alpha + 5n-1 +j} dx - x^{2 \alpha +j} dx + x^j dx y^2.
\]
The complication comes from the fact that we want $-2 x^{\alpha + 5n-1 +j} dx$ to be the largest significant term, so we cannot simply work with $\nu_{2,j}$.

\begin{lem}
For $j \equiv 1 \pmod{5}$ with $0 \leq j \leq 2n-1$, there exists $\eta_{2,j} \in V$ such that
\[
\gamma(\nu_{2,j} - \eta_{2,j}) = -2 x^{5n-1+\alpha+j} dx+ \ldots,
\]
where the omitted terms are either regular or are smaller than $x^{5n-1+\alpha+j} dx$.
\end{lem}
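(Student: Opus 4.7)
The plan is to choose $\eta_{2,j} \in V$ so that $\gamma(\eta_{2,j})$ cancels the largest‐exponent non‐regular term $-x^{10n-2+j}dx$ appearing in $\gamma(\nu_{2,j})$, thereby promoting $-2x^{5n-1+\alpha+j}dx$ to be the leading non‐regular term of $\gamma(\nu_{2,j} - \eta_{2,j})$.

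For the generic case $j \geq 6$, I take $\eta_{2,j} := \nu_{2,j-4}$. Since $j \equiv 1 \pmod 5$, we have $j - 4 \equiv 2 \pmod 5$ and $0 \leq j - 4 \leq 2n - 5 \leq n_2$, so $\nu_{2,j-4}$ is a valid basis element of $V$ that falls into the Type~A regime. A direct computation (identical to the one underlying the Type~A row of Table~\ref{table:imagesummary}) gives
\[
\gamma(\nu_{2,j-4}) = -x^{10n-2+j}dx + x^{j-4}dx\,y^2,
\]
and the $y^2$-component is regular because $j - 4 \leq n_2$. Subtracting then yields
\[
\gamma(\nu_{2,j} - \eta_{2,j}) = -2x^{5n-1+\alpha+j}dx - x^{2\alpha+j}dx + (x^j - x^{j-4})dx\,y^2,
\]
in which the $y^2$-term is regular. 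Using $\alpha = 5n - 5\lfloor 2(n+2)/5 \rfloor + 4$ and $n \geq 3$, we have $5n - 1 - \alpha = 5\bigl(\lfloor 2(n+2)/5 \rfloor - 1\bigr) \geq 5$, so $5n-1+\alpha+j > 2\alpha+j$, confirming that $-2x^{5n-1+\alpha+j}dx$ is the strictly largest‐exponent non‐regular term.

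The boundary case $j = 1$, where $\nu_{2,-3}$ is undefined, is the main obstacle. An inspection of the basis shows that no single $\nu_{i,k}$ with $i,k$ in range produces $-x^{10n-1}dx$ as a clean constant‐in‐$y$ contribution, so the construction of $\eta_{2,1}$ must involve a linear combination. Most naturally one draws from some $\nu_{3,k_0}$, whose $\omega_0$-component can reach exponents of the required size through terms of $f^3$ and $f\omega_1$, corrected if necessary by a $\nu_{2,k_1}$ to absorb the auxiliary non‐regular contributions. Verifying that such a combination exists and yields the required leading term may require splitting into cases based on the residue class of $n$ modulo $5$, since the exponents arising from $\alpha$, $2\alpha$, and their combinations with $f^3$ depend sensitively on this residue.
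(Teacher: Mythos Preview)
Your approach has a fundamental misreading of the ordering in Definition~\ref{def:ordering}: for terms with the same power of $y$, the paper declares $x^{a_1}dx < x^{a_2}dx$ precisely when $a_1 > a_2$, so \emph{smaller} exponent means \emph{larger} in the ordering. Consequently, among the three non-regular constant-in-$y$ terms of $\gamma(\nu_{2,j})$, namely $-x^{10n-2+j}dx$, $-2x^{5n-1+\alpha+j}dx$, and $-x^{2\alpha+j}dx$, the \emph{largest} one is $-x^{2\alpha+j}dx$ (since $2\alpha+j < 5n-1+\alpha+j < 10n-2+j$). The term $-x^{10n-2+j}dx$ is already smaller than $x^{5n-1+\alpha+j}dx$ in this ordering and need not be cancelled at all; the term that must be removed is $-x^{2\alpha+j}dx$, which is larger and is not regular (one checks $2\alpha > 4n-1$ in every residue class of $n$ modulo $5$).

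Your subtraction of the Type~A element $\nu_{2,j-4}$ therefore cancels the wrong term: after your step one still has $-x^{2\alpha+j}dx$ present, and this term is larger than $x^{5n-1+\alpha+j}dx$, so the conclusion of the lemma fails. The paper instead cancels $-x^{2\alpha+j}dx$ using a Type~C element $\nu_{1,\,2\alpha+j-(5n+1)}$, whose image is $x^{2\alpha+j}dx + (\text{regular})$; this works whenever $0 \le 2\alpha+j-(5n+1) \le 3n-1$, which covers all but a handful of boundary values of $j$ (one for each residue of $n$ modulo $5$), and those are handled by explicit ad hoc combinations. Your discussion of the boundary case $j=1$ is thus aimed at the wrong obstruction entirely.
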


\begin{proof}
If $ 0 \leq 2 \alpha+j- (5n+1) \leq 3n-1$, we take $\eta_{2,j} = - \nu_{1,2 \alpha+j- (5n+1)}$.  
Note that $2 \alpha+j- (5n+1) \equiv 3 \pmod{5}$.  We compute that
\[
\gamma(\nu_{1,2 \alpha+j- (5n+1)}) =  x^{2 \alpha + j}+ x^{2 \alpha+j- (5n+1)} dx y.
\]
Thus we have
\[
\gamma(\nu_{2,j})  + 2 \gamma(\nu_{1,2 \alpha+j- (5n+1)}) = -2 x^{\alpha + 5n-1 +j} dx - x^{10n-2+j} dx +2x^{2 \alpha+j- (5n+1)} dx y +  x^j dx y^2,
\]
where the third and fourth terms are regular and the second is smaller than the first term in the ordering as desired.  

\begin{table}[ht]
\begin{tabular}{|| c | c | c| c| c| c||}
\hline
 & $n = 5k$ & $n = 5k+1$ & $n = 5k+2$ & $n=5k+3$ & $n = 5k+4$\\
\hline
$5n+1-2 \alpha$ & $-5k -7$ & $-5k-2$ & $-5k-7$ & $-5k-2$ & $-5k-7$\\ 
\hline
$8n-2\alpha$ & $10k - 8$ & $10k$ & $10k-2$ & $10k+6$ & $10k+4$  \\
\hline
$2n-1$ & $10k-1$ & $10k +1$ & $10k+3$& $10k+5$ & $10k + 7$\\
\hline
\end{tabular}
\caption{Bounds} 
\label{table:bounds}
\end{table}

There are only a few cases where $0 \leq 2 \alpha +j - (5n+1) \leq 3n-1$ fails and we must make another choice of $\eta_{2,j}$.  Table~\ref{table:bounds} shows that if $j \geq 0$ then
$5n+1-2 \alpha \leq j$.  Furthermore, if $j\equiv 1 \pmod{5}$ and $ j \leq 2n-1$ it shows that $j \leq 8n- 2 \alpha$ except in four cases.  We list the appropriate choice of $\eta_{2,j}$ in each of these exceptional cases.

\begin{itemize}
\item If $n= 5k$, $j = 10 k -4$, then $\eta_{2,j} = 2 \nu_{2,1} + 2 \nu_{1,5k+8}$.

\item If $n=5k+1$, $j = 10k+1$, then $\eta_{2,j} = 2 \nu_{2,1} + 2 \nu_{1,5k+3}$.

\item If $n= 5k+2$, $j =  10k +1$, then $\eta_{2,j} = 2 \nu_{2,1} + 2 \nu_{1,5k+8}$.

\item  If $n = 5k+4$, $j = 10k+6$, then $\eta_{2,j} = 2\nu_{2,1} +  2 \nu_{1,5k+8} $.
\end{itemize}
A direct computation then shows that $\gamma(\nu_{2,j} - \eta_{2,j})$ has the desired form.
\end{proof}

\begin{table}[ht]
\centering
\begin{tabular}{|c|c|c|c|} 
 \hline
 $n$ & Minimal Exponent & Maximal Exponent & Number of Exponents\\
 \hline 
 $5k$ & $40k+4$ & $50k-1$ & $2k$ \\  \hline
 $5k+1$ & $40k+9$ & $50k+9$ & $2k+1$\\ \hline
 $5k+2$ & $40k+19$ & $50k+19$& $2k+1$ \\ \hline
 $5k+3$ & $40k+24$ & $50k+24$ & $2k+1$\\ \hline
 $5k+4$ & $40k+34$ & $50k+39$ & $2k+2$\\ \hline
\end{tabular}
\caption{Type B}
\label{table:B}
\end{table}

\boldheader{Type C}  
The congruence condition on $j$ makes 
\[
\gamma(\nu_{1,j}) = s\left(\mathscr{C}\left(x^{5n+1+j}dx+x^{5n-1+j}dx+x^{5n-5\floor*{\frac{2(n+2)}{5}}+4+j}dx\right)\right) + x^j dx y =  x^{5n+1+j} dx + x^j dx y.
\]
The exponent $5n+1+j$ ranges from $5n+4$ to the largest integer congruent to $4$ modulo $5$ and less than or equal to $8n$.  Table~\ref{table:C} shows the range and number of exponents.

\begin{table}[ht]
\centering
\begin{tabular}{ |c|c|c|c|  }
\hline
$n$ & Minimal Exponent & Maximal Exponent & Number of Exponents\\
\hline
$5k$ & $25k+4$ & $40k-1$ & $3k$\\ \hline
$5k+1$ & $25k+9$ & $40k+4$ & $3k$\\ \hline
$5k+2$ & $25k+14$ & $40k+14$ & $3k+1$\\ \hline
$5k+3$ & $25k+19$ & $40k+24$ & $3k+2$\\ \hline
$5k+4$ & $25k+24$ & $40k+29$ & $3k+2$\\ \hline
\end{tabular}
\caption{Type C} \label{table:C}
\end{table}

\boldheader{Type D}  The congruence condition $j \equiv 0 \pmod{5}$ means that
\[
\gamma(\nu_{1,j}) = x^{5n-1+j} dx + x^{\alpha +j} dx + x^j dx y.
\]
The second term has smaller exponent than the first term, and $\alpha+j$ ranges from $\alpha$ to the smallest multiple of $5$ less than or equal to $\alpha + 3n-1$.  However, if $\alpha +j \leq 4n-1$ then $-x^{\alpha+j} dx$ is regular, while if $\alpha+j \geq 5n+4$ then $x^{\alpha+j}$ has already appeared for Type C.  Table~\ref{table:D} shows the range and number of exponents we consider to obtain new differentials which are not regular.  

\begin{table}[ht]
\centering
\begin{tabular}{|c|c|c|c|} 
 \hline
 $n$ & Minimal Exponent & Maximal Exponent & Number\\
 \hline 
 $5k$ & $20k+4$ & $25k-1$ & $k$\\ \hline
 $5k+1$ & $20k+4$ & $25k+4$ & $k+1$\\ \hline
 $5k+2$ & $20k+9$ & $25k+9$ & $k+1$\\ \hline
 $5k+3$ & $20k+14$ & $25k+14$ & $k+1$\\ \hline
 $5k+4$ & $20k+19$ & $25k+19$ & $k+1$\\ \hline
\end{tabular}
\caption{Type D}
\label{table:D}
\end{table}

\boldheader{Types E-H} 
 These types are essentially the same as Types A-D, except that the range of allowable $j$ is smaller as we are working with $\nu_{3,j}$ instead of $\nu_{2,j}$ (and $\nu_{2,j}$ instead of $\nu_{1,j}$), and that a differential $x^i dx y$ is regular provided $i \leq 3n-1$.

\begin{table}[ht]
\centering
\begin{tabular}{|c|c|c|c|} 
 \hline
 $n$ & Minimal Exponent & Maximal Exponent & Number\\
 \hline 
 $5k$ & $50k+4$ & $ 55k-1$ & $k$\\ \hline
 $5k+1$ & $50k+14$ & $ 55k+9$ & $k$\\ \hline
 $5k+2$ & $50k+24$ & $ 55k+19$ & $k$\\ \hline
 $5k+3$ & $50k+34$ & $ 55k+34$ & $k+1$\\ \hline
 $5k+4$ & $50k+44$ & $ 55k+44$ & $k+1$\\ \hline
\end{tabular}
\caption{Type E}
\label{table:E}
\end{table}

\begin{table}[ht]
\centering
\begin{tabular}{|c|c|c|c|} 
 \hline
 $n$ & Minimal Exponent & Maximal Exponent & Number\\
 \hline 
 $5k$ & $40k+4$ & $ 45k-1$ & $k$\\ \hline
 $5k+1$ & $40k+9$ & $ 45k+4$ & $k$\\ \hline
 $5k+2$ & $40k+19$ & $ 45k+19$ & $k+1$\\ \hline
 $5k+3$ & $40k+24$ & $ 45k+24$ & $k+1$\\ \hline
 $5k+4$ & $40k+34$ & $ 45k+34$ & $k+1$\\ \hline
\end{tabular}
\caption{Type F}
\label{table:F}
\end{table}

\begin{table}[ht]
\centering
\begin{tabular}{|c|c|c|c|} 
 \hline
 $n$ & Minimal Exponent & Maximal Exponent & Number\\
 \hline 
 $5k$ & $25k+4$ & $ 35k-1$ & $2k$\\ \hline
 $5k+1$ & $25k+9$ & $ 35k+4$ & $2k$\\ \hline
 $5k+2$ & $25k+14$ & $ 35k+14$ & $2k+1$\\ \hline
 $5k+3$ & $25k+19$ & $ 35k+19$ & $2k+1$\\ \hline
 $5k+4$ & $25k+24$ & $ 35k+24$ & $2k+1$\\ \hline
\end{tabular}
\caption{Type G}
\label{table:G}
\end{table}

\begin{table}[ht]
\centering
\begin{tabular}{|c|c|c|c|} 
 \hline
 $n$ & Minimal Exponent & Maximal Exponent & Number\\
 \hline 
 $5k$ & $15k+4$ & $ 25k-1$ & $2k$\\ \hline
 $5k+1$ & $15k+4$ & $ 25k+4$ & $2k+1$\\ \hline
 $5k+2$ & $15k+9$ & $ 25k+9$ & $2k+1$\\ \hline
 $5k+3$ & $15k+9$ & $ 25k+14$ & $2k+2$\\ \hline
 $5k+4$ & $15k+14$ & $ 25k+19$ & $2k+2$\\ \hline
\end{tabular}
\caption{Type H}
\label{table:H}
\end{table}

\boldheader{Types I-J}
These are the same as Type C and D, except that they use $\nu_{3,j}$ so $0 \leq j \leq n-1$.

\begin{table}[ht]
\centering
\begin{tabular}{|c|c|c|c|} 
 \hline
 $n$ & Minimal Exponent & Maximal Exponent & Number\\
 \hline 
 $5k$ & $25k+4$ & $ 30k-1$ & $k$\\ \hline
 $5k+1$ & $25k+9$ & $ 30k+4$ & $k$\\ \hline
 $5k+2$ & $25k+14$ & $ 30k+9$ & $k$\\ \hline
 $5k+3$ & $25k+19$ & $ 30k+14$ & $k$\\ \hline
 $5k+4$ & $25k+24$ & $ 30k+24$ & $k+1$\\ \hline
\end{tabular}
\caption{Type I}
\label{table:I}
\end{table}

\begin{table}[ht]
\centering
\begin{tabular}{|c|c|c|c|} 
 \hline
 $n$ & Minimal Exponent & Maximal Exponent & Number\\
 \hline 
 $5k$ & $15k+4$ & $ 20k-1$ & $k$\\ \hline
 $5k+1$ & $15k+4$ & $ 20k+4$ & $k+1$\\ \hline
 $5k+2$ & $15k+9$ & $ 20k+9$ & $k+1$\\ \hline
 $5k+3$ & $15k+9$ & $ 20k+9$ & $k+1$\\ \hline
 $5k+4$ & $15k+14$ & $ 20k+14$ & $k+1$\\ \hline
\end{tabular}
\caption{Type J}
\label{table:J}
\end{table}

\begin{lem} \label{independencelemma}
The images under $\gamma$ of the differentials of Types A-J  are linearly independent in the space of  differentials on $Y$ modulo regular differentials on $Y$.
\end{lem}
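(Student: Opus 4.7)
The plan is to carry out a leading-term argument with respect to the ordering of Definition~\ref{def:ordering}. For each element $\nu$ in the list of Types A--J, the analysis preceding the lemma (together with Table~\ref{table:imagesummary}) identifies a distinguished ``leading'' monomial differential $c_\nu x^{a_\nu} dx\, y^{b_\nu}$ appearing in $\gamma(\nu)$ with $c_\nu \neq 0$ and $x^{a_\nu} dx\, y^{b_\nu}$ not regular on $Y$, and guarantees that every other monomial term of $\gamma(\nu)$ is either regular on $Y$ or strictly smaller than $x^{a_\nu} dx\, y^{b_\nu}$ in the ordering of Definition~\ref{def:ordering}. The choices of $\eta_{2,j}$ and $\eta_{3,j}$ in Types B and F are precisely designed to ensure this ``every other term is smaller or regular'' property, since the offending $y^1$-contributions they introduce are automatically regular by the degree bounds built into $V$.

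The first step is to verify that the pairs $(a_\nu, b_\nu)$ are pairwise distinct as $\nu$ ranges over all Types A--J and all admissible values of $j$. Since the power of $y$ in the leading term determines the ``block'' ($b_\nu = 0$ for Types A--D, $b_\nu = 1$ for Types E--H, $b_\nu = 2$ for Types I--J), it suffices to verify, within each block, that the $a_\nu$-intervals listed in Tables~\ref{table:A}--\ref{table:J} are pairwise disjoint. Within a single type, $j$ runs over a residue class modulo $5$ in an interval, so the corresponding $a_\nu$'s form an arithmetic progression with common difference $5$ and are certainly distinct. Disjointness across types within a block follows from a direct case analysis on $n \bmod 5$ using the tabulated endpoints and the values of $\alpha$ in Table~\ref{table:alpha}; this is the most tedious step, but is essentially bookkeeping.

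Given this distinctness, I execute the leading-term induction. Suppose a linear combination $\sum c_\nu \gamma(\nu)$ is regular on $Y$. Order the finite set of pairs $(a_\nu, b_\nu)$ under Definition~\ref{def:ordering} and let $\nu_0$ realize the largest. Because every other $\nu$ has leading $(a_\nu, b_\nu)$ strictly smaller than $(a_0, b_0)$, and all non-leading monomial terms of each $\gamma(\nu)$ are strictly smaller than $(a_\nu, b_\nu)$ or regular, no image apart from $\gamma(\nu_0)$ can contribute a non-regular monomial at $(a_0, b_0)$. Hence the coefficient of $x^{a_0} dx\, y^{b_0}$ in $\sum c_\nu \gamma(\nu)$ equals $c_{\nu_0} c_{\nu_0}'$, where $c_{\nu_0}' \neq 0$ is the leading coefficient of $\gamma(\nu_0)$. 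Since $x^{a_0} dx\, y^{b_0}$ is not regular on $Y$, this coefficient must vanish, forcing $c_{\nu_0} = 0$. Peeling off $\nu_0$ and iterating shows all $c_\nu = 0$, yielding the desired linear independence modulo regular differentials. The main obstacle is purely combinatorial, namely the casewise verification of pairwise disjointness of the intervals in Tables~\ref{table:A}--\ref{table:J}; once that is in hand the induction is automatic.
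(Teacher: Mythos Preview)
Your proposal is correct and follows essentially the same approach as the paper: both arguments order the elements by the ``largest relevant term'' of $\gamma(\nu)$ under Definition~\ref{def:ordering}, observe that these leading terms are pairwise distinct and not regular (a fact established in the case-by-case analysis and Tables~\ref{table:A}--\ref{table:J}), and then peel off coefficients one at a time from the top down. The paper's proof is terser, deferring the distinctness of leading terms entirely to the preceding case analysis, whereas you spell out explicitly that this distinctness must be checked blockwise in $b_\nu$; but the logical structure is identical.
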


\begin{proof}
Suppose that
\[
c_1 \gamma(\nu_1) + c_2 \gamma(\nu_2) + \ldots + c_r \gamma(\nu_r)
\]
is zero modulo regular differentials, where $c_i \in \FF_5$ and the $\nu_1, \nu_2, \ldots \nu_r $ are in the $\nu$ column of Table~\ref{table:imagesummary} and are chosen with appropriate restrictions on $j$ as in the case-by-case analysis so the ``largest relevant terms'' of $\gamma(\nu_1)$ through $\gamma(\nu_r)$ are distinct.   Without loss of generality, order them so the largest relevant terms are increasing.  But then the largest relevant term of $\gamma(\nu_r)$ is not regular, but also does not occur in any of the other terms of the linear combination.  Thus $c_r=0$.  Repeating this, we see $c_1 = c_2 = \ldots = c_r=0$ and hence obtain linear independence.  
\end{proof}

\begin{lem} \label{countinglemma}
We have that
\[
\dim \Im(\psi) \geq \begin{cases}
 16k & \text{ when } n = 5k\\
 16k+4 & \text{ when } n=5k+1\\
 16k+8 & \text{ when }n=5k+2\\
 16k+11 & \text{ when }n= 5k+3\\
 16k+14 & \text{ when }n = 5k+4.
\end{cases}
\]
\end{lem}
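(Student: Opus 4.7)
The plan is to combine Lemma~\ref{independencelemma} with the tables constructed in the case-by-case analysis (Tables~\ref{table:A}--\ref{table:J}) to bound $\dim \Im(\psi)$ from below. The key observation is that by Proposition~\ref{prop:summary} and the definition of $\psi$, an element $\nu \in V$ lies in $\ker(\psi)$ exactly when $\gamma(\nu)$ is regular on $Y$; thus $\dim \Im(\psi)$ equals the dimension of the image of $\gamma$ inside the quotient of the space of meromorphic differentials on $Y$ by the subspace of regular ones. So to produce a lower bound on $\dim \Im(\psi)$, it suffices to exhibit a collection of elements of $V$ whose $\gamma$-images are linearly independent modulo regular differentials.

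For each of the ten types A--J, I would collect the elements $\nu$ (either a single basis vector $\nu_{i,j}$ or a small correction $\nu_{i,j} - \eta_{i,j}$ as in Type B and Type F) whose existence and structure were verified case by case in the preceding analysis, under the stated congruence and range restrictions on $j$. Lemma~\ref{independencelemma} then guarantees that, taken together, these $\gamma(\nu)$ are linearly independent modulo regular differentials on $Y$: the proof there is just that their ``largest relevant terms'' (with respect to the ordering of Definition~\ref{def:ordering}) are all distinct non-regular monomial differentials, so a putative linear relation is broken by peeling off the largest term.

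The remaining work is purely a bookkeeping step. For each residue class of $n$ modulo $5$, I would read off from Tables~\ref{table:A}--\ref{table:J} the number of $\gamma$-images contributed by each type, verify the totals recorded in Table~\ref{table:counting}, and sum them column by column to obtain $16k$, $16k+4$, $16k+8$, $16k+11$, and $16k+14$ respectively. For instance, when $n = 5k$ the contributions are $2k+2k+3k+k+k+k+2k+2k+k+k = 16k$, and the other four cases are handled identically.

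The main (though not deep) obstacle is checking the arithmetic in Table~\ref{table:counting}: each entry depends on how many residues modulo $5$ in a specified range and satisfying a specified congruence produce a non-regular image whose leading exponent has not already appeared earlier in the list. The delicate cases are Type D and Type H, where the range of allowable $j$ must be trimmed so that $x^{\alpha+j}\,dx$ is neither already regular nor a duplicate of a Type C (respectively Type G) contribution; these trimmings are what cause the counts to vary by $\pm 1$ depending on $n \bmod 5$. Beyond this, the corrections $\eta_{2,j}$ and $\eta_{3,j}$ in Types B and F require a short finite check of the four exceptional $j$ (as enumerated in Table~\ref{table:bounds}), but these do not affect the count, only the existence of the desired element. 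Once all counts are verified, the inequality in the lemma follows immediately.
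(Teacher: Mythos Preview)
Your proposal is correct and follows essentially the same approach as the paper: invoke Lemma~\ref{independencelemma} to obtain linearly independent images modulo regular differentials, sum the counts in Table~\ref{table:counting}, and use the fact that $\psi$ records precisely the coefficients obstructing regularity to translate this into a lower bound on $\dim \Im(\psi)$. Your write-up is somewhat more detailed about the bookkeeping (Type D/H trimming, Type B/F corrections), but the argument is the same.
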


\begin{proof}
Table~\ref{table:counting} shows how many differentials there are of each type, with the restrictions on $j$ imposed in the case-by-case analysis to make the largest relevant terms distinct.  As the images under $\gamma$ of these differentials are linearly independent modulo regular differentials by Lemma~\ref{independencelemma}, the image of $\gamma$ modulo differentials regular on $Y$ has dimension at least the total from Table~\ref{table:counting}.   But as $\psi(\nu)$ was defined to record all of coefficients of $\gamma(\nu)$ which would prevent it from being regular, we are done.
\end{proof}

\begin{proof}[Proof of the $d = 5n+1$ Case of Theorem~\ref{prop:p5trinomial}]
Using the rank-nullity theorem, the dimension of $V$ from Remark~\ref{rmk:dimV}, and Lemma~\ref{countinglemma}, we obtain an upper bound on the $a$-number of $Y$ as shown in Table~\ref{table:dimensions}.  Since this upper bound equals the lower bound $L(\{d\})$, we conclude $a_Y = L(\{d\})$.
\end{proof}

\begin{table}
\centering
\begin{tabular}{|c |c |c | c |} 
 \hline
 $n$ & $\dim(V)$ & $a$-number Upper Bound & $L(\{d\})$ \\ 
 \hline
 $5k$ & $40k$ & $24k$ & $24k$\\ \hline
 $5k+1$ & $40k+10$ & $24k+6$ & $24k+6$ \\ \hline
 $5k+2$ & $40k+18$ & $24k+10$ & $24k+10$ \\ \hline
 $5k+3$ & $40k+26$ & $24k+15$ & $24k+15$ \\ \hline
 $5k+4$ & $40k+34$ &$ 24k+20$ & $24k+20$ \\ 
 \hline
\end{tabular}
\caption{Bounds on the $a$-number, $d = 5n+1$}
\label{table:dimensions}
\end{table}


\providecommand{\bysame}{\leavevmode\hbox to3em{\hrulefill}\thinspace}
\providecommand{\MR}{\relax\ifhmode\unskip\space\fi MR }
\providecommand{\MRhref}[2]{%
  \href{http://www.ams.org/mathscinet-getitem?mr=#1}{#2}
}
\providecommand{\href}[2]{#2}

\end{document}